\numberwithin{equation}{section}
\newtheorem{theo}{Theorem}[section]
\newtheorem{lem}[theo]{Lemma}
\newtheorem{defi}[theo]{Definition}
\newtheorem{cor}[theo]{Corollary}
\newtheorem{prop}[theo]{Proposition}
\newtheorem{rmk}{Remark}[section]
\newtheorem{assump}{Assumption}
\newcommand{\eps}{\varepsilon}
\newcommand{\N}{\mathbb{N}}
\newcommand{\R}{\mathbb{R}}
\newcommand{\Z}{\mathbb{Z}}
\newcommand{\Sph}{S^{N-1}}
\newcommand{\ol}{\overline}
\renewcommand{\ul}{\underline}
\newcommand{\vp}{\varphi}
\renewcommand{\.}{\cdot}
\DeclareMathOperator{\dv}{div}
\DeclareMathOperator\supp{supp}
\DeclareMathOperator\inter{int}
\DeclareMathOperator\Conv{Conv}
\let\mc=\mathcal
\let\t=\widetilde
\def\ptf{pulsating travelling front}
\newenvironment{formula}[1]{\begin{equation}\label{#1}}
{\end{equation}\noindent}
\def\Fi#1{\begin{formula}{#1}}
	\def\Ff{\end{formula}\noindent}
\def\BS{\color{Bittersweet}}
\begin{document}
\title{\bf Stability of propagating terraces\\ in spatially periodic multistable equations in $\R^N$}
\author{Thomas Giletti\thanks{Univ.~Clermont Auvergne, LMBP UMR6620, Aubière, France} \ \
 \& \ Luca Rossi\footnote{Istituto G.~Castelnuovo, Sapienza Universit\`a di Roma, Rome, Italy}}
 \date{}
 
\maketitle


\abstract{
In this paper, we study the large time behaviour of solutions of multistable reaction-diffusion equations in~$\R^N$, with a spatially periodic heterogeneity. By multistable, we mean that the problem admits a finite -~but arbitrarily large~- number of stable, periodic steady states. In contrast with the more classical monostable and bistable frameworks, which exhibit the emergence of a single travelling front in the long run,
in the present case the large time dynamics is governed by a family of stacked travelling fronts, involving intermediate steady states, called propagating terrace.
Their existence in the multidimensional case has been established in our previous work~\cite{GR_2020}. 
The first result of the present paper is their uniqueness.
Next, we show that the speeds of the propagating terraces in different directions dictate the
spreading speeds of solutions of the Cauchy problem, for 
both planar-like and compactly supported initial data. The latter case turns out to be much more intricate than the former, due to the fact that the propagating terraces in distinct 
directions may involve different sets of intermediate steady states. 
Another source of difficulty is that the {\em Wulff shape} of the speeds of travelling fronts can be non-smooth,
as we show in the bistable case using a result of \cite{DingGiletti}.
}

\normalsize

\section{Introduction}\label{sec:intro}

This paper is concerned with the reaction-diffusion equation
\Fi{eq:parabolic}
\partial_t u = \dv (A(x) \nabla u ) + f (x,u), \quad t >0, \ x \in \R^N.
\Ff
The diffusion matrix $A = (A_{i,j})_{1 \leq i,j \leq N}$ is assumed to be 
smooth and to satisfy
$$
\exists C_1 , C_2>0 , \quad \forall x , \xi \in \R^N, \quad C_1 |\xi|^2 \leq \sum_{i,j} A_{i,j} (x) \xi_i \xi_j \leq C_2 |\xi|^2  .
$$
We further assume $A$ and $f$ to be spatially periodic, with period $1$ in each direction of the canonical basis, namely,
$$\forall h \in \mathbb{Z}^N, \quad A (\cdot + h) \equiv A (\cdot) , \quad f (\cdot + h , \cdot) \equiv f(\cdot, \cdot).$$
From now on, when we say that a function is (spatially) periodic, we mean that it is invariant 
under translation by vectors in $\mathbb{Z}^N$.

We consider the case where equation~\eqref{eq:parabolic} is of the multistable type, in the following sense.
\begin{assump}\label{ass:multi}
The function $f : \R^N \times \R \mapsto \R$ is of class $C^1$ and equation~\eqref{eq:parabolic} satisfies the following:
\begin{itemize}
	\item there are two linearly stable, periodic steady states, that we call ``extremal'': 
	the trivial one $0$ and a positive one $\overline{p}$;
	\item any other periodic steady state between~$0$ and $\overline{p}$ is either linearly stable or linearly unstable.
\end{itemize}
\end{assump}

We emphasise that the linearly stable steady states are not assumed to be ordered,
namely, they are allowed to intersect each other. Of course, when two steady states are ordered, then 
they are strictly ordered, due to the elliptic strong maximum principle.
The precise meaning of linear stability and instability will be reclaimed in Section~\ref{sec:prelim}.
Assumption \ref{ass:multi} implies that the (linearly) stable, periodic steady states between~$0$ and $\overline{p}$ are isolated
and that their number is finite. This is shown in Proposition \ref{pro:finitep}
in Section~\ref{sec:prelim}. Linear stability implies {\em asymptotic stability} (cf.~Lemma~\ref{lem:perturb0} below) which,
for a steady state $p$, means the existence of 
an $L^\infty$-neighbourhood of $p$ such that any solution $u(t,x)$ of~\eqref{eq:parabolic} 
with an initial datum in such neighbourhood converges uniformly to $p$ as $t\to +\infty$. 
The largest of such neighbourhoods is called the {\em basin of attraction} of~$p$.

When $0$ and $\overline{p}$ are the unique stable steady states, 
 equation~\eqref{eq:parabolic} is said to be bistable. In such a case, 
 there typically exist particular entire in time solutions (i.e. solutions for all times), called {\em pulsating travelling fronts}.
 These solutions also characterise 
 the large time dynamics of solutions of the Cauchy problem. We refer to~\cite{SKT,Xin91-4} for early introductions of the concept in the literature, and to~\cite{DHZ17,Ducrot2,FZ} for recent developments in the bistable case. Here however, due to the possible existence of several stable steady states, 
  the propagation phenomenon may involve a stack of 
  fronts, which leads one to consider the more general notion of a {\em propagating terrace}. 
  The fact that propagating terraces arise in the large time behaviour of solutions of multistable equations
  has been proved in the case of spatial dimension equal to one~\cite{Terrace,MR4130256}, as well
  as in the multidimensional homogeneous setting~\cite{MR3705791,MR4078110,RossiFG}.
  Our goal is to extend those results to arbitrary dimension in the spatially periodic case.
  
  Let us recall the notions of pulsating travelling front (or wave) and propagating terrace.
\begin{defi}[Pulsating travelling front]\label{def:puls}
Let $q_1 > q_2$ be two periodic steady states of~\eqref{eq:parabolic} and let~$e \in S^{N-1}$. A pulsating travelling front connecting $q_1$ to $q_2$ in the direction~$e$ with speed $c \in \R$ is an entire solution to \eqref{eq:parabolic} that can be written in the form
$$u (t,x) = U (x, x\cdot e - ct),$$
where $U(x,z)$, called the profile of the front, is periodic in the $x$-variable and satisfies
$$U (\cdot, -\infty) \equiv q_1 (\cdot ) > U (\cdot, \cdot ) > U (\cdot, +\infty) \equiv q_2 (\cdot) .$$
\end{defi}
We point out that the particular case when $c=0$ must be handled carefully. Indeed, the change of variables $(t,x) \mapsto (x, x \cdot e- ct)$ is no longer invertible when $c=0$ and therefore the profile~$U$ cannot be inferred from~$u$, and it is actually relevant only on the set $\{(x,x\.e)\ |\ x\in\R^N\}$. In particular, parabolic estimates  imply that the convergence of~$U$ toward steady states is uniform with respect to the first variable only in the case $c \neq 0$; when $c =0$, one may instead deduce that $U(x,x \cdot e)$ converges to~$q_1$ and~$q_2$ respectively as $x \cdot e$ goes to~$-\infty$ and~$+\infty$. We refer to Proposition~\ref{prop:cv} below for more details. Furthermore, pulsating travelling fronts with zero speed enjoy weaker uniqueness and stability properties. As a matter of fact we will rule out this case in the present work.
\begin{defi}[Propagating terrace]\label{def:terrace}
A propagating terrace connecting $\overline{p}$ to~$0$ in a direction $e \in S^{N-1}$ is a couple of two finite sequences 
$$\mathcal{T} = ((q_k)_{0 \leq k \leq K} , (U_k)_{1 \leq k \leq K} ),$$
such that:
\begin{itemize}
\item the functions $q_k$ are periodic steady states of \eqref{eq:parabolic} with
$$ \overline{p} \equiv q_0 > q_1 > \cdots >q_K \equiv 0;$$
\item the functions $U_k$ are profiles of pulsating travelling fronts connecting $q_{k-1}$ to $q_k$ in the direction~$e$ with speed~$c_k$;
\item the sequence of speeds $(c_k)_{1 \leq k \leq K}$ is nondecreasing, i.e.
$$c_1 \leq c_2 \leq \cdots \leq c_K.$$
\end{itemize}
\end{defi}
We will occasionally refer to the steady states $(q_k)_{ 1 \leq k \leq K}$ of $\mc{T}$
as the platforms of the terrace, and to the integer $K$ as the number of platforms (notice that we do not include~$q_0$ in the platforms).
We will also sometimes say that a terrace contains a \ptf\ if it contains the associated profile $U$.

The notion of a propagating terrace appeared in~\cite{FMcL}, under the name of {\em minimal decomposition}, in the homogeneous case. More recently, it was studied in the one-dimensional spatially periodic case in~\cite{Terrace,MR4130256}, and in higher dimensions in our previous work~\cite{GR_2020}, whose main result we now recall.
%
\begin{theo}[\cite{GR_2020}]\label{th:existence}
Under Assumption~\ref{ass:multi}, for any $e \in \Sph$, there exists a propagating terrace $\mathcal{T} = ((q_k),(U_k))$ connecting $\overline{p}$ to $0$ 
in the direction $e$.

Furthermore, each platform $q_k$ of this terrace is a stable, periodic, steady state, and each profile $U_k(x,z)$ is nonincreasing with respect to~$z$.
\end{theo}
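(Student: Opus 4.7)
The plan is to construct the terrace by iteratively extracting individual pulsating travelling fronts from the Cauchy problem, each step adding one platform. Fix $e\in\Sph$; by Proposition~\ref{pro:finitep} the set $\mc{P}$ of stable periodic steady states between $0$ and $\ol p$ is finite. Starting from $q_0:=\ol p$, the $k$-th step produces, as long as $q_{k-1}\not\equiv 0$, a new stable state $q_k\in\mc{P}\cup\{0\}$ with $q_k<q_{k-1}$, together with a profile $U_k$ of a pulsating travelling front connecting $q_{k-1}$ to $q_k$ in the direction $e$. Because $\mc{P}$ is finite and the sequence $(q_k)$ is strictly decreasing, the iteration terminates after at most $|\mc{P}|+1$ steps with $q_K\equiv 0$.

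For the single step I would solve the Cauchy problem for~\eqref{eq:parabolic} with a periodic, planar-like initial datum $u_0$ that equals $q_{k-1}$ on $\{x\.e\le -R\}$ and~$0$ on $\{x\.e\ge R\}$; comparison gives $0\le u\le q_{k-1}$. Fix a threshold $\theta$ just below $q_{k-1}$, in the $L^\infty$-basin of attraction of $q_{k-1}$ but bounded away from the other elements of $\mc{P}$. For each $n\in\N$, pick a point $x_n$ that is farthest in the direction $e$, modulo $\Z^N$, on the super-level set $\{u(n,\.)\ge\theta\}$. Setting $\t u_n(t,x):=u(t+n,x+x_n)$ and applying parabolic Schauder estimates together with the $\Z^N$-periodicity of the coefficients, one extracts a subsequential limit $U_\infty$, an entire solution of~\eqref{eq:parabolic} trapped between $0$ and~$q_{k-1}$. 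The monotonicity in the direction $e$ inherited from the maximal choice of $x_n$, combined with the asymptotic stability of each element of $\mc{P}$ (Lemma~\ref{lem:perturb0}), forces $U_\infty$ to converge, as $x\.e\to\pm\infty$, to $q_{k-1}$ on one side and to some $q_k\in\mc{P}\cup\{0\}$ strictly below $q_{k-1}$ on the other. Writing $x_n\approx n c_k e$ modulo $\Z^N$-bounded corrections identifies the speed $c_k$ and yields the pulsating form $U_\infty(t,x)=U_k(x,x\.e-c_kt)$; the monotonicity of $U_k(x,z)$ in~$z$ then follows from the sliding method of Berestycki-Nirenberg.

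The nondecreasing order of the speeds $c_1\le\cdots\le c_K$ is then obtained by contradiction: if $c_k>c_{k+1}$ for some $k$, the $k$-th front would eventually overtake the $(k+1)$-th, and a translation-limit argument analogous to the single-step extraction would produce a front from $q_{k-1}$ directly to a stable state strictly below~$q_k$, contradicting the fact that at step~$k$ the construction selected $q_k$ as the limit at $+\infty$ of $U_\infty$. The main obstacle throughout is the single-step extraction: one must ensure that $U_\infty$ genuinely stabilises at stable steady states at $x\.e=\pm\infty$ rather than stagnating at an unstable periodic steady state or failing to converge. This requires quantitative use of the basins of attraction of the elements of $\mc{P}$, together with uniform-in-$x$ (modulo $\Z^N$) control of the level sets of $u(t,\.)$. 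A further subtlety arises in the zero-speed case, where parabolic regularity no longer yields uniform-in-$x$ convergence of the profile, and the weaker pointwise statement of Proposition~\ref{prop:cv} must be invoked.
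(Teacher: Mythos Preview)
This theorem is not proved in the present paper; it is quoted from the authors' earlier work~\cite{GR_2020}, and the only discussion here is the remark following the statement, which reconciles the hypotheses. So there is no in-paper proof to compare against directly. Your sketch is in the spirit of the approach used in~\cite{GR_2020} and its antecedents (Ducrot--Giletti--Matano, Pol\'a\v{c}ik): one solves a Cauchy problem with planar-like data, tracks level sets, and extracts fronts as subsequential limits in suitable moving frames.

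There is, however, a genuine gap in your argument for the ordering of the speeds. You restart the Cauchy problem at each step with fresh initial data between $q_{k-1}$ and~$0$, and then argue by contradiction that $c_k>c_{k+1}$ would force a front from $q_{k-1}$ to a state strictly below~$q_k$, contradicting your step-$k$ extraction. But the front $U_{k+1}$ with speed $c_{k+1}$ was extracted from a \emph{different} Cauchy problem (the one between $q_k$ and~$0$); comparison only tells you that the original solution lies above this second one, i.e.\ that the $q_k$-level sets of the original solution move at least as fast as~$c_{k+1}$. This gives no contradiction when $c_k>c_{k+1}$: the $\theta$-level set (near $q_{k-1}$) sits behind the $q_k$-level set, and nothing prevents the latter from moving at some speed in $[c_{k+1},\infty)$ that still exceeds~$c_k$. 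Your step-$k$ construction produced \emph{one} front emanating from~$q_{k-1}$ and landing on~$q_k$; it did not establish any minimality property of~$q_k$ or of~$c_k$ that a second, deeper front would violate.

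The standard remedy, and the one used in~\cite{GR_2020}, is to extract \emph{all} the fronts from a \emph{single} Cauchy problem connecting $\ol p$ to~$0$. The level sets of that one solution are nested in space, so the speeds of the successive interfaces are automatically ordered. An alternative, closer to what the present paper describes in Section~\ref{sec:algorithm} (though there only as a determination procedure, not an existence proof), is to first produce bistable fronts between consecutive ordered stable states, and then merge any adjacent pair whose speeds are out of order into a single front; the merging terminates because it strictly reduces the number of fronts. Either route closes the gap; your restarted-Cauchy-problem iteration, as written, does not.
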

\begin{rmk}
Let us mention that Theorem~\ref{th:existence} was proven under slightly different hypotheses. 
First, it was explicitly assumed that there is a finite number of stable periodic steady states; here it instead follows from our Proposition~\ref{pro:finitep}. Furthermore, Assumption~1.3 in~\cite{GR_2020} involved a so-called ``counter-propagation''. That assumption states that, for any unstable periodic steady state $q$, the speed of any front connecting~$q$ to some lower periodic steady state is strictly less than the speed of any front connecting some larger periodic steady state to~$q$. However, if the non-stable steady states are linearly unstable, then
 by either~\cite[Theorem 2.4]{W02} or~\cite{BHR1}, the latter must be positive and the former negative, hence that inequality holds. 
Here we assume the stronger 
linear instability hypothesis in Assumption~\ref{ass:multi} for simplicity of the presentation. We refer to Section~\ref{sec:prelim} below for details.

Next, we point out that, due to the heterogeneity of the coefficients, which makes the problem not invariant by rotation,
the propagating terraces in two distinct directions are in general different. As we have shown in~\cite[Proposition 1.6]{GR_2020}, the intermediate steady states, and even the number of fronts~$K$, may also depend on the direction in a nontrivial way; see also~\cite{DingGiletti} for some other results on the asymmetry of fronts in spatially periodic reaction-diffusion equations.
\end{rmk}


\section{Statement of the main results}\label{sec:results}

This paper addresses the questions of uniqueness and attractiveness of the propagating terraces. 
Our results will confirm that, while there can be many families of stacked pulsating fronts, 
there exists only one propagating terrace in any given direction.
In addition, these
 terraces are the only ones appearing
in  the large time behaviour of solutions of the Cauchy problem, for two classes of initial data:
planar-like and compactly supported. 
This extends known results from the one dimensional
and the homogeneous~settings. 


\subsection{Uniqueness of the propagating terrace}\label{sec:!}

We start with the uniqueness result.
\begin{theo}\label{th:uniqueness}
Under Assumption~\ref{ass:multi}, let
$$\mathcal{T} = ((q_k)_{0 \leq k \leq K} , (U_k)_{1 \leq k \leq K})$$ be a terrace connecting $\overline{p}$ to $0$ in 
a direction $e \in \Sph$. If all its speeds are non-zero, i.e
\Fi{ck<>0}
c_k\neq0 \quad \text{for all }\; k\in\{1,\dots,K\},
\Ff
then $\mathcal{T}$ is the unique terrace in the direction~$e$, up to shifts, in the sense that for any other terrace $\mathcal{T}' = ((q'_k)_{0 \leq k \leq K'}, (U'_k)_{1 \leq k \leq K'})$ connecting $\overline{p}$ to $0$ in the direction $e$, it holds that $K' = K$ and 
$$\forall k\in\{0,\dots,K\}, \quad q_k \equiv q'_k ,$$
$$\forall k\in\{1,\dots,K\}, \quad \exists \xi_k \in \R, \quad U_k ( \cdot , \cdot + \xi_k) \equiv U'_k (\cdot, \cdot).$$
\end{theo}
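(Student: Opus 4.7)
My plan is to split the proof into two stages. First, show that any two terraces $\mathcal{T},\mathcal{T}'$ in the direction $e$ share the same sequence of platforms $(q_k)=(q_k')$ and speeds $(c_k)=(c_k')$ (so in particular $K=K'$). Second, deduce uniqueness-up-to-shift of each individual pulsating front given this matching structural data. The second stage is the more classical: if $q_{k-1}=q_{k-1}'$, $q_k=q_k'$, and $c_k=c_k'\neq0$, then $U_k$ and $U_k'$ are two pulsating fronts connecting the same pair of consecutive stable platforms, both nonincreasing in $z$, at the same non-zero speed. Because $c_k\neq0$, the profiles are uniformly continuous via parabolic regularity in the moving frame. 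A sliding argument then applies: translate $U_k'$ in the $z$ variable so that its graph lies strictly above that of $U_k$ (possible by the prescribed limits at $\pm\infty$ and the stability of $q_{k-1}$, $q_k$), then decrease the shift to the smallest value $\xi_k$ for which the ordering still holds; at $\xi_k$ the two profiles must touch, and the parabolic strong maximum principle forces $U_k(\cdot,\cdot+\xi_k)\equiv U_k'$.

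\textbf{Matching platforms and speeds.} For the first stage I would proceed by induction on the layers, peeling them from the top $(k=1)$ downward. Assume inductively that $q_j=q_j'$ and $c_j=c_j'$ for all $j<k$. To establish $c_k=c_k'$, argue by contradiction: suppose $c_k<c_k'$. The entire solutions $u(t,x)=U_k(x,x\cdot e-c_kt)$ and $u'(t,x)=U_k'(x,x\cdot e-c_k't)$ both lie below the common ceiling $q_{k-1}$. Working in the frame moving with speed $c_k$, the slow profile $u$ is stationary while $u'$ drifts in direction $e$ at speed $c_k'-c_k>0$. By choosing suitable initial translates of $u$ to bound $u'$ from above, and exploiting the uniform convergence $U_k(\cdot,z)\to q_{k-1}$ as $z\to-\infty$ together with the stability of $q_{k-1}$, one shows that $u'$ cannot escape the trap while simultaneously retaining its prescribed asymptotic profile, a contradiction. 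Hence $c_k\leq c_k'$ and by symmetry $c_k=c_k'$. Equality $q_k=q_k'$ then follows from comparing the asymptotic limits $U_k(\cdot,+\infty)$ and $U_k'(\cdot,+\infty)$ in the moving frame and invoking the basin-of-attraction property of stable periodic steady states, combined with the fact that no stable periodic steady state strictly separates $q_{k-1}$ from~$q_k$ (nor from~$q_k'$), which forces the two immediate successors to coincide.

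\textbf{Main obstacle.} The principal difficulty lies in the first stage, namely in matching the platforms. Assumption~\ref{ass:multi} explicitly allows the stable periodic steady states to be non-ordered, so two terraces in the same direction $e$ could a priori employ entirely different intermediate stable states $q_k\neq q_k'$ that are not comparable, and standard comparison between $u$ and $u'$ is then unavailable at the asymptotic level $z=+\infty$. Overcoming this rests crucially on the hypothesis \eqref{ck<>0}: the non-zero speeds ensure the profiles $U_k$, $U_k'$ are continuous and invertible in the $z$-variable, allowing parabolic comparisons to be translated cleanly into the moving frame—something that genuinely fails for stationary profiles, as the authors emphasise right after Definition~\ref{def:puls}. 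A further subtlety, absent in the one-dimensional setting treated in~\cite{Terrace, MR4130256}, is that in $\R^N$ the fronts need not be rotationally symmetric and the convergence to platforms holds only transversally to $e$, so the comparison arguments must be carried out with some care about the behaviour in the directions orthogonal to~$e$.
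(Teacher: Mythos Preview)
Your strategy differs from the paper's in two essential respects, and one of them conceals a real gap.

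\textbf{Direction of the induction.} You peel layers from the top ($k=1$) downward; the paper peels from the bottom ($k=K$) upward, i.e.\ it first matches the \emph{lowest} (fastest) fronts. By the symmetry $u\mapsto\overline p-u$ these are formally equivalent, so this alone is not a problem.

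\textbf{What is compared.} At the inductive step you compare a single front $U_k$ of $\mathcal T$ directly with the single front $U'_k$ of $\mathcal T'$. This is where the approach breaks. You correctly flag the obstacle: below the common platform $q_{k-1}=q'_{k-1}$, the next platforms $q_k$ and $q'_k$ need not be ordered, so neither front can serve as a barrier for the other near $z=+\infty$, and your ``trapping'' argument for $c_k\leq c_k'$ has no comparison to stand on. The proposed fix---non-zero speeds give continuous, $z$-invertible profiles---does not touch this point; regularity in~$z$ does nothing to restore comparability of the asymptotic states. The claim that ``no stable periodic steady state strictly separates $q_{k-1}$ from $q_k$'' is likewise unproved and is essentially what you are trying to show.

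The paper's way around the obstacle is \emph{not} to compare individual fronts. Instead (Proposition~\ref{pro:unique_lowest}) it perturbs every profile $U_1,\dots,U_K$ of the terrace~$\mathcal T$ with non-zero speeds into strict supersolutions near each platform (Lemma~\ref{lem:perturb}), glues them into a single continuous function $\overline u^\eta$ connecting $\overline p+3\eta\varphi_0$ to the common bottom state $\underline p+\eta\varphi_K$, and then slides this \emph{global} object against the single lowest front $U'_{K'}$ of~$\mathcal T'$. Because the glued function spans the full range from $\overline p$ down to~$\underline p$, the incomparability of intermediate platforms never arises: the contact point in the sliding argument is forced to lie away from every platform (this is where Lemma~\ref{lem:perturb} bites), and a limit $\eta\to0$ together with the strong maximum principle identifies $U'_{K'}$ with some $U_k$, necessarily $U_K$. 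The equality $q_{K-1}=q'_{K'-1}$ then comes \emph{for free} from the asymptotics of the coincident profiles---it is not established separately. One drops the common lowest front and iterates.

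In short: your Stage~2 (sliding for a single bistable front with known endpoints and non-zero speed) is fine and indeed classical, but Stage~1 as written does not get off the ground. The missing idea is the glued-terrace supersolution, which converts the multistable comparison into a sequence of essentially bistable ones without ever needing the intermediate platforms to be comparable.
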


Theorem~\ref{th:uniqueness} expresses the rather surprising fact that there is only one way of covering the range $0$, $\ol p$
by a family of ordered fronts with ordered (non-zero) speeds.
In Section~\ref{sec:algorithm} we will exhibit an iterative procedure for the determination of the fronts ``selected'' by the terrace.

We point out that the assumption that the speeds $c_k \neq 0$ is truly necessary. When some of the fronts have zero speed, there may exist different terraces and even possibly with a different number of platforms; we refer to~\cite[Section~6]{MR4130256} for details in the one dimensional case.


\subsection{Planar-like initial data}\label{sec:planar-like}

Next, we investigate the attractiveness of the (unique) propagating terrace for the Cauchy problem when the initial datum is ``planar-like''.
Throughout the the paper, initial data are always assumed to be measurable functions between $0$ and $\ol p$. 
We show that if the initial datum is ``planar-like'' in a given direction, 
then the unique propagating terrace in that direction emerges 
(through its speeds and steady states) in the large time behaviour of the solution.
\begin{theo}\label{th:planar_speeds}
Under Assumption~\ref{ass:multi}, let $\mathcal{T} = ((q_k)_{0 \leq k \leq K} , (U_k)_{1 \leq k \leq K})$
be a terrace connecting $\overline{p}$ to $0$ in 
a direction $e \in \Sph$. Assume that all its speeds are non-zero.
%
Let~$u$ be a solution with an initial datum $0 \leq u_0 \leq \overline{p}$ satisfying
$$\liminf_{x \cdot e \to -\infty} \big(u_0 (x)- \overline{p} (x)\big)>-\eta,\qquad
\limsup_{x \cdot e \to +\infty} u_0 (x)<\eta,$$
where $\eta>0$ is such that $\overline{p} - \eta$ and $\eta$
lie in the basins of attraction of $\overline{p}$ and~$0$ respectively. 

Then~$u$ spreads in the direction $e$
accordingly to the terrace $\mathcal{T}$, 
in the sense that, for any $\eps>0$ and $1 \leq k \leq K$, it holds
$$ 
\liminf_{t \to +\infty} 
\bigg(\inf_{ x \cdot e \leq (c_k - \eps ) t} \big(u(t,x) - q_{k-1} (x)\big)\bigg)  \geq 0,$$
$$ 
\limsup_{t \to +\infty} 
\bigg(\sup_{ x\cdot e\geq (c_k + \eps) t} \big( u(t,x) - q_k (x)\big)\bigg) \leq  0.$$
In particular, if $c_k>c_{k+1}$ then one has
%
%
%
%
\begin{equation}\label{eq:plateau}
\lim_{t \to +\infty} 
\bigg(\sup_{(c_k + \eps) t \leq x \cdot e \leq (c_{k+1} - \eps ) t} \big| u(t,x) - q_k (x) \big|\bigg) = 0 .
\end{equation}
\end{theo}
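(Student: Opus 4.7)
The plan is to establish the upper bound $\limsup_{t\to+\infty}\sup_{x\cdot e\geq(c_k+\eps)t}(u(t,x)-q_k(x))\leq 0$ for $k=1,\dots,K$ by finite induction on~$k$, to derive the symmetric lower bound by an analogous induction starting from the bottom platform $q_K\equiv 0$ and moving upwards, and finally to deduce~\eqref{eq:plateau} by intersecting the two regions of validity. I focus on the upper bound, as the other direction is entirely dual.

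A preliminary reduction uses the basin-of-attraction hypothesis together with the asymptotic stability of~$\overline{p}$ and~$0$ (Lemma~\ref{lem:perturb0}) to find, for any $\delta>0$, a time $T_0>0$ and $L>0$ such that $u(T_0,\cdot)\geq\overline{p}-\delta$ on $\{x\cdot e\leq -L\}$ and $u(T_0,\cdot)\leq\delta$ on $\{x\cdot e\geq L\}$. A localisation argument based on the parabolic maximum principle should allow this bound to be upgraded, so that the fixed thresholds $\pm L$ can be replaced by slowly moving half-space boundaries. For the base case $k=1$, the monotonicity of~$U_1$ in~$z$ (Theorem~\ref{th:existence}) together with its limits $\overline{p}(x)$ and $q_1(x)$ as $z\to-\infty$ and $z\to+\infty$ respectively allow one to pick a shift~$\xi_1$ such that $U_1(x,x\cdot e-\xi_1)\geq u(T_0,x)$ everywhere. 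Parabolic comparison then yields $u(t,x)\leq U_1(x,x\cdot e-c_1(t-T_0)-\xi_1)$ for $t\geq T_0$, and since $c_1\neq 0$, Proposition~\ref{prop:cv} provides the uniform convergence of~$U_1(\cdot,z)$ to~$q_1$ as $z\to+\infty$, which delivers the claim on $\{x\cdot e\geq(c_1+\eps)t\}$.

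The inductive step is the heart of the proof and the main obstacle. Granted the bound at level $k-1$, I would compare~$u$ with the shifted profile $U_k(x,x\cdot e-c_k t-\xi_k)$ on the \emph{moving} half-space $\Omega_t=\{x\cdot e\geq(c_{k-1}+\eps')t+C\}$ for some $\eps'\in(0,\eps)$ and $C$ large. The inductive hypothesis ensures $u(t,x)\leq q_{k-1}(x)+\delta$ on the lateral boundary $\partial\Omega_t$; and when $c_{k-1}<c_k$, the argument of~$U_k$ on this boundary tends to~$-\infty$, so that~$\xi_k$ can be chosen to make $U_k(x,x\cdot e-c_k t-\xi_k)\geq q_{k-1}(x)-\delta$ there. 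A parabolic maximum principle on the moving domain~$\Omega_t$ then propagates the inequality inside, and letting $x\cdot e-c_k t\to+\infty$ within the inner region $\{x\cdot e\geq(c_k+\eps)t\}\subset\Omega_t$ yields $u\leq q_k+o(1)$, completing the induction.

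The delicate case of consecutive equal speeds $c_{k-1}=c_k$ forces the moving boundary of~$\Omega_t$ to travel at least as fast as the front~$U_k$ itself, which breaks the argument above; in this situation $q_{k-1}$ does not appear as a genuine plateau of~$u$, and one should lump together all consecutive fronts of common speed, treating their composite as a single super-front driving the transition. An analogous dual induction, starting from~$U_K$ and using subsolutions, delivers the lower bound $u\geq q_{k-1}-o(1)$ on $\{x\cdot e\leq(c_k-\eps)t\}$. Finally, the plateau statement~\eqref{eq:plateau} is immediate: whenever consecutive speeds are strictly ordered, the upper bound at level~$k$ and the lower bound at level~$k+1$ sandwich~$u$ between $q_k-o(1)$ and $q_k+o(1)$ on the strip $\{(c_k+\eps)t\leq x\cdot e\leq(c_{k+1}-\eps)t\}$.
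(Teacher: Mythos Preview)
Your inductive scheme has a genuine gap already at the base case. You write that one can pick a shift~$\xi_1$ with $U_1(x,x\cdot e-\xi_1)\geq u(T_0,x)$ everywhere, but this is impossible: by definition $U_1(\cdot,z)<\overline{p}$ \emph{strictly} for all~$z$, whereas $u(T_0,\cdot)$ can approach~$\overline{p}$ as $x\cdot e\to-\infty$ (take e.g.\ $u_0=\overline{p}$ on a left half-space), so $\inf(\overline{p}-u(T_0,\cdot))=0$ and no shift of the bare profile can dominate~$u$. The same obstruction recurs at the inductive step: on the boundary of your moving domain~$\Omega_t$ you only have $u\leq q_{k-1}+\delta$ while $U_k<q_{k-1}$, so the boundary inequality $u\leq U_k$ that the comparison principle needs is never available. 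Your ``lumping together'' of consecutive fronts with equal speed is also left entirely vague.

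The paper's remedy is precisely to perturb the profiles: it builds, via Lemma~\ref{lem:supersol_planarspeeds}, functions
\[
\overline{u}_k(t,x)=U_k\big(x,x\cdot e-(c_k+\eps_k)t\big)+\eta_k\,\overline{\psi}_k(t,x),
\]
where $\overline{\psi}_k$ interpolates between the principal eigenfunctions $\varphi_{k-1}$ and~$\varphi_k$. The added $\eta_k\varphi_{k-1}$ pushes the left limit above~$q_{k-1}$, curing the gap above; the cost is an error term in the equation, which is compensated by the strict inequality $\partial_z U_k<0$ on compact sets (available because $c_k\neq 0$) combined with the slight speed increase~$\eps_k$. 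Rather than proceeding by induction on moving half-spaces, the paper then \emph{glues all the~$\overline{u}_k$ simultaneously} into a single generalised supersolution~$\overline{u}$ by taking minima on suitable strips; choosing the perturbations~$\eps_k$ strictly increasing makes the perturbed speeds $c_k+\eps_k$ strictly ordered even when some $c_k=c_{k+1}$, so the equal-speed difficulty you flag never arises. A single global comparison with~$\overline{u}$ then yields the upper bound for all~$k$ at once.
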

\begin{rmk}\label{rmk:theo_planar}
Several comments are in order. 
\begin{enumerate}
\item It is reasonable to expect 
that the conclusions of Theorem~\ref{th:planar_speeds} hold true when some of the speeds~$c_k$ are zero. However, our construction of sub and supersolutions does not cover this situation, the reason being related to the non-invertibility of the change of variables in Definition~\ref{def:puls}. 
A different argument is required, possibly by comparison with some other multistable reaction-diffusion equations whose terraces only involve non-zero speeds.
\item One may wonder whether the solution also converges, 
in the moving frames with speeds $c_k$, to the profiles $U_k$ of the travelling fronts. 
Actually, under the additional assumption that the speeds are not only non-zero but also strictly ordered, 
i.e. $0 < c_k < c_{k+1}$ for all $k\in\{1,\dots,K-1\}$, one could refine
the arguments of our proof by constructing some sharper sub and supersolutions,
in the spirit of Fife and McLeod~\cite{FMcL}. This would show that the solution is 
asymptotically ``trapped'' between two shifts of the propagating terrace, 
in the sense that, for any $k\in\{1,\dots,K\}$, 
$$\limsup_{t \to +\infty} \sup_{ (c_k-\eps) t  \leq x \cdot e  \leq (c_k+\eps) t } 
\big(u (t, x)  - U_k (x, x\cdot e- c_k t - \xi  )\big) \leq 0,$$
$$\liminf_{t \to +\infty} \inf_{ (c_k-\eps) t  \leq x \cdot e  \leq (c_k+\eps) t }  
\big(u (t, x)  - U_k (x, x\cdot e- c_k t + \xi )\big) \geq 0 ,$$
for any given $0<\eps<\min(c_k-c_{k-1},c_{k+1}-c_k)$ and $\xi>0$ sufficiently large. Since the speeds~$c_k$ are non-zero, one may then invoke a result of Berestycki and Hamel on generalised transition fronts~\cite{BH12}. One would deduce that the solution converges
as $t\to+\infty$, in the moving frame with speed~$c_k$ in the direction~$e$ and up to extraction of a time subsequence, to some shift of the pulsating front profile~$U_k$. Yet we expect that this convergence may not be uniform in space, or may not even occur for any arbitrary sequence of times. Since this 
 improvement would give raise to significant additional technicalities, such as an estimation of the exponential decay of the fronts, we leave it to further investigation.
\item 
The emergence of a zone where the solution converges to one of the steady states~$q_k$
is only guaranteed by~\eqref{eq:plateau} in Theorem~\ref{th:planar_speeds}
when $c_k < c_{k+1}$, with ${1\leq k\leq K-1}$.
Notice that convergence towards the extremal steady states $0$ and $\overline{p}$ is already contained in the first part of 
Theorem~\ref{th:planar_speeds},
which indeed implies
$$ \forall c<c_1,\qquad
\lim_{t \to +\infty} 
\bigg(\sup_{ x\cdot e\leq c t} |u(t,x)-\overline{p}(x)|\bigg)=  0,$$
$$ \forall c>c_K,\qquad
\lim_{t \to +\infty} 
\bigg(\sup_{ x\cdot e\geq c t} | u(t,x) | \bigg)=  0.$$
because $0\leq u\leq \overline{p}$ by the maximum principle.
In the remaining case $c_k = c_{k+1}$, owing to the heuristics that the pulsating travelling fronts of the terrace should 
appear in the large time behaviour of the solution, none of which might connect directly~$q_{k-1}$ and $q_{k+1}$, 
thanks to the uniqueness of the propagating terrace given by Theorem~\ref{th:uniqueness},
we believe that such a zone should still emerge in the long run, but only expanding sublinearly in time.
\end{enumerate} 
\end{rmk}


\subsection{Compactly supported initial data}\label{sec:cptsupp}

We then turn to the case where the initial datum $u_0$ has a compact support. It is expected that {\em invasion} occurs, 
i.e.~that the solution converges locally uniformly to the largest stable steady state~$\overline{p}$, 
provided that all speeds are positive and the initial datum is large enough, in a suitable sense. So there will be an expanding 
region where the solution converges towards~$\overline{p}$.
But there could also exist some regions where the solution is attracted by other stable states. 
Our goal is to describe the asymptotic shape of all these regions as $t\to+\infty$.
This is achieved in \cite{RossiFG} in the case of the autonomous equation 
$\partial_t u = \Delta u+ f (u)$.
The case of the general periodic equation \eqref{eq:parabolic} is much more complex, also compared with
that of planar-like initial data considered in the previous subsection.
The complexity comes from two factors, which are peculiar to heterogeneous equations in dimension higher than one:
	\begin{enumerate}[\bf 1]
	\item The {\em spreading shape} associated with compactly supported initial data is related to the spreading speeds of planar-like solutions in different directions 
	in a nontrivial way, and precisely through their {\em Wulff shape}.
	
	\item Propagating terraces in distinct directions
may themselves have very different forms;
we have indeed constructed in~\cite[Proposition 1.6]{GR_2020} an explicit example in which the number of 
platforms of the terrace (i.e.~the number $K$ in Definition~\ref{def:terrace}) changes as the direction varies.
\end{enumerate}

As a matter of fact, factor {\bf 1} already arises for the bistable equation,
that is when $0$ and $\overline{p}$ are the only linearly stable steady states, and any intermediate steady state is linearly unstable. 
According to Theorem~\ref{th:existence}, for any direction $e\in S^{N-1}$, there exists a 
travelling wave connecting~$\overline{p}$ to~$0$, with some speed $c^* (e)$.
Assuming that $c^*(e)>0$ for all $e\in\Sph$, one has that
the {\em spreading speed} in any given direction $e\in S^{N-1}$ exists and is given by the
Freidlin-G\"artner formula
\Fi{w*}
w^* (e) = \inf_{\substack{ e' \in S^{N-1}\\ e' \cdot e >0}} \frac{c^* (e')}{e' \cdot e},
\Ff
or, equivalently, that the so-called {\em spreading shape} is given by the {\em Wulff shape}
of the speeds of the fronts:
\[\begin{split}
W^* &= \bigcap_{e \in S^{N-1}} \{ x\in\R^N\ |\ x \cdot e \leq c^* (e) \} \\
&= \{ r e \ | \ e\in S^{N-1} \text{\; and \;}  0 \leq r \leq w^* (e) \},
\end{split}\]
with $w^*(e)$ given by~\eqref{w*}.
More precisely, \cite[Theorems 1.4 and 1.5]{RossiFG}
assert that, for a solution $u$ with a compactly supported initial datum, which satisfies
$u(t,x)\to\ol p$ as $t\to+\infty$ locally uniformly in $x$,
the following hold for any $\eps\in(0,1)$:
$$
\limsup_{t \to +\infty} \bigg(
 \sup_{x    \in\, \R^N\setminus   (1+\varepsilon)t  W^* } u(t,x) \bigg)= 0,
\qquad
\limsup_{t \to +\infty}  \bigg( \sup_{x   \in  (1-\varepsilon)t  W^*  } | u(t,x) -   \overline{p} (x) |  \bigg) = 0.
$$
In other words, the region where the solution approaches the upper steady state~$\overline{p}$ is the intersection (among all directions) of the half-spaces where travelling fronts are close to~$\overline{p}$. We also refer to~\cite{FG} for the origin of this formula under the Fisher-KPP assumption, i.e. $0 < f(x,u) \leq \partial_u f (x,0) u$ for $0 < u < \overline{p}$. In the homogenous case, where $c^* (e) = c^*$ is independent of~$e$, the Freidlin-G\"artner 
gives $w^* (e) = c^* $ and the spreading shape~$W^*$ reduces to the ball $B_{c^*}$; thus the formula is consistent with and extends the classical results of the homogeneous scalar equation~\cite{AW}.

As for factor {\bf 2}, it is peculiar to the multistable case.
As already seen in the planar case, the speed of propagation may vary from one level set to another,
according to the propagating terrace. 
Therefore, one needs to deal with  {\em several spreading shapes}, associated with different levels.
This becomes very intricate when the platforms involved in the terraces, and even their number, 
depend on the direction of the terrace. The asymptotic spreading shape is then a result of a complex interplay between 
distinct
platforms, some of which might appear in some of the terraces but not in others.

We will present below two results on the large time spreading of solutions with compactly supported initial data, under two different set of assumptions. As a matter of fact, our first set of assumptions will rule out factor {\bf 2} by the hypothesis that all the terraces, in all directions, share the same platforms. Our second set of assumptions will instead be related with the regularity of the spreading shapes associated with terraces. In both cases, we derive some generalisations of the
Freidlin-G\"{a}rtner formula. They show that 
solutions with compactly supported initial data spread 
towards distinct steady states,
exhibiting a family of nested asymptotic shapes. Each asymptotic shape is given by 
the Wulff shape associated with the speeds of the terraces corresponding to those steady states. This leads us to introduce the more general notion below.
%
\begin{defi}[Wulff shape]\label{def:W}
The Wulff shape associated with a function $c:\Sph\to\R$ is
	$$W_c:=\bigcap_{e \in S^{N-1}} \{ x\in\R^N\ |\ x \cdot e \leq c(e) \}.$$
\end{defi}

Let us state our first set of assumptions for the case when the initial datum is compactly supported.
\begin{assump}\label{ass:same_shape}
	Equation \eqref{eq:parabolic} is of the multistable type, in the sense of Assumption~\ref{ass:multi}, and, for any~$e\in S^{N-1}$,
	there is a terrace
	$$\mathcal{T}^e = ((q_k^e )_{0\leq k \leq K^e}, (U_k^e)_{1 \leq k \leq K^e} )$$
	connecting $\overline{p}$ to $0$ in the direction $e$,
	and furthermore:
	\begin{itemize}
		\item $c_1(e) >0$\, for all $e \in S^{N-1}$, where $c_1(e)$ is the speed of the front with profile~$U_1^e$;
		\item the integer $K^e = K$ and the platforms $q_k^e = q_k$, $1 \leq k \leq K$, do not depend on $e \in S^{N-1}$.
	\end{itemize}
\end{assump}
\noindent Several comments are in order. 
First, Assumption~\ref{ass:multi} guarantees the existence of the terraces $\mathcal{T}^e$, owing to Theorem~\ref{th:existence}.
Second, the first condition implies that all travelling fronts of the terrace~$\mathcal{T}^e$ have positive speeds, for any direction $e \in S^{N-1}$, hence Theorem~\ref{th:uniqueness} entails that the terrace $\mathcal{T}^e$ is unique, up to shifts of the fronts. 
Third, we emphasise that the set of steady states $(q_k)_{0 \leq k \leq K}$ does not necessarily coincide with the set of all stable steady states of the equation.

We are now in a position to state our first result on the spreading of solutions associated with compactly supported initial~data.
\begin{theo}\label{th:spread_cpct1}
	Under Assumption~\ref{ass:same_shape}, let $u$ be a solution emerging from a compactly supported initial datum $0 \leq u_0 \leq \overline{p}$,
	for which there is invasion, i.e.
	\begin{equation}\label{spread_start}
		\lim_{ t \to +\infty } u (t, x) \rightarrow \overline{p} \quad\text{ locally uniformly in }\;x\in\R^N.
	\end{equation}
	Then the spreading shapes of $u$ are given by the Wulff shapes $W_{c_k}$ of the speeds $c_k$ 
	of the terraces~$(\mathcal{T}^e)_{e \in S^{N-1}}$, in the following sense: for every 
	$k\in\{1,\dots,K\}$ and $\eps \in (0,1)$, it holds
		$$
		\limsup_{t \to +\infty}
		\bigg( \sup_{x   \in\, \R^N\setminus   (1+\varepsilon)t  W_{c_k}  } \big(u(t,x) -  q _k (x)\big)\bigg) \leq 0,$$
		$$
		\liminf_{t \to +\infty}\bigg( \inf_{x   \in  (1-\varepsilon)t  W_{c_k}  } \big(u(t,x) -  q_{k-1}(x)
		\big)\bigg) \geq 0.
		$$
	In particular, for $k=0,K$ and all $k\in\{1,\dots,K-1\}$ such that $c_k\neq c_{k+1}$, and any $\eps \in (0,1)$, one has
	$$\lim_{t \to +\infty} 
	\bigg(\sup_{x \in (1-\varepsilon) t W_{c_{k+1}}  \setminus (1+\varepsilon)t  W_{c_k}  } | u(t,x) -  q _k (x) |\bigg) = 0,$$
	where, in the cases $k=0,K$, it is understood that $W_{c_0}:= \emptyset$ and $W_{c_{K+1}} := \R^N$.
\end{theo}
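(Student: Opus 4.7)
My plan is to combine the planar-like spreading result Theorem~\ref{th:planar_speeds} with a Freidlin--G\"artner-type covering argument over the unit sphere $\Sph$. Under Assumption~\ref{ass:same_shape}, the platforms $q_k$ are independent of direction, so the Wulff shape $W_{c_k}$ is well-defined globally and the conclusion is expressed via a single family of nested shapes. The proof naturally splits into the exterior (upper) and interior (lower) estimates; the final ``in particular'' statement about the shells $(1-\eps)tW_{c_{k+1}}\setminus(1+\eps)tW_{c_k}$ will follow immediately by combining the two bounds at consecutive levels.

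\textbf{Upper bound.} Let $B_R\supset\supp u_0$. For each $e\in\Sph$ I would define a planar-like supersolution datum $v_0^e$ equal to $\overline{p}(x)$ on $\{x\cdot e\leq R\}$ and to $0$ on $\{x\cdot e\geq R+1\}$ (with a smooth transition), which satisfies the hypotheses of Theorem~\ref{th:planar_speeds} and majorises $u_0$. The associated solution $v^e$ then obeys
$$\limsup_{t\to+\infty}\sup_{x\cdot e\geq(c_k(e)+\eps)t+R}\big(v^e(t,x)-q_k(x)\big)\leq 0,$$
and $u\leq v^e$ by the parabolic comparison principle. Any point $x\notin (1+\eps)tW_{c_k}$ admits a direction $e$ with $x\cdot e>(1+\eps)c_k(e)t$. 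Since $c_k(\cdot)\geq c_1(\cdot)>0$ by Assumption~\ref{ass:same_shape} and $c_1$ is continuous on the compact set $\Sph$, one has $c_k\geq c_{\min}>0$ uniformly; a finite covering of $\Sph$ by small angular sectors combined with continuity of $c_k$ then upgrades this pointwise bound to a uniform one in $x$, yielding the required $\limsup\leq 0$.

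\textbf{Lower bound.} The invasion hypothesis~\eqref{spread_start} provides, for any $R'>0$ and $\delta>0$, a time $T=T(R',\delta)$ such that $u(T,\cdot)\geq\overline{p}-\delta$ on $B_{R'}$. For each $e\in\Sph$, the natural candidate for a planar-like subsolution in direction $e$ would have initial datum $(\overline{p}-\delta)\mathbf{1}_{\{x\cdot e\leq -M\}}$; unfortunately this is not dominated by $u(T,\cdot)$ on the whole half-space. I would therefore use its truncation $\psi_0^{e,R'}$ supported in $B_{R'}\cap\{x\cdot e\leq -M\}$, which is $\leq u(T,\cdot)$ provided $R'$ is large enough compared to $M$ and $\delta$. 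The key auxiliary step, following ideas of~\cite{RossiFG}, is to show that, by enlarging $R'$, the resulting compactly supported subsolution $\psi^{e,R'}$ asymptotically tracks the planar-like solution of Theorem~\ref{th:planar_speeds}, so that
$$\liminf_{s\to+\infty}\inf_{x\cdot e\leq(c_k(e)-\eps)s}\big(u(T+s,x)-q_{k-1}(x)\big)\geq 0.$$
Any $x\in(1-\eps)tW_{c_k}$ satisfies $x\cdot e\leq(1-\eps)c_k(e)t$ for \emph{every} direction $e\in\Sph$, so choosing, say, $e=x/|x|$ (admissible thanks to the uniform positivity of $c_k$) and then invoking a covering/continuity argument on $\Sph$ analogous to the one used for the upper bound promotes the pointwise estimate to the required uniform one.

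The main obstacle I foresee is precisely the ``bootstrap'' step in the lower bound: passing from the local convergence $u(T,\cdot)\to\overline{p}$ on an arbitrarily large ball to a planar-like lower bound that can be fed into Theorem~\ref{th:planar_speeds}, while controlling the dependence of shifts and waiting times on $e$ uniformly enough for the sphere-covering argument to close. This is a multistable, periodic analogue of the classical Aronson--Weinberger ``hair-trigger''-type lemma and is what makes the hypothesis $c_1(e)>0$ essential: it guarantees that the compactly supported subsolution genuinely catches up with the planar front in every direction. Everything else—Theorem~\ref{th:planar_speeds}, the maximum principle, the continuity of $c_k(\cdot)$ on $\Sph$—is then already in hand.
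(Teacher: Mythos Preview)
Your upper bound sketch is essentially the paper's argument (Proposition~\ref{spread_upper}), with one caveat: you invoke \emph{continuity} of $e\mapsto c_k(e)$, but only lower semi-continuity is available (Lemma~\ref{lem:lsc}). The paper sidesteps this by covering not $\Sph$ by angular sectors, but the compact set $(1+2\eps)\partial W_{c_k}$ by finitely many open half-spaces $\{x\cdot e_j>(1+\eps)c_k(e_j)\}$; this needs no regularity of $c_k$ beyond the positivity and lower semi-continuity that make $W_{c_k}$ a compact convex body.

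The lower bound, however, has a genuine gap. Your displayed claim
\[
\liminf_{s\to+\infty}\inf_{x\cdot e\leq(c_k(e)-\eps)s}\big(u(T+s,x)-q_{k-1}(x)\big)\geq 0
\]
asserts that the solution from a compactly supported datum fills an entire half-space at the planar speed $c_k(e)$. This is false: a compactly supported solution only fills the Wulff shape, whose radial speed in direction $e$ is $w_k(e)=\inf_{e'\cdot e>0}c_k(e')/(e'\cdot e)\leq c_k(e)$. In the half-space $\{x\cdot e\leq(c_k(e)-\eps)s\}$ there are points far from the Wulff shape (e.g.\ in directions nearly orthogonal to $e$) where $u$ is close to~$0$. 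So no enlargement of $R'$ will make a compactly supported subsolution ``track'' the planar solution on a half-space, and the bootstrap you identify as the main obstacle is not merely hard but impossible in the form you state it. The paper instead proves the Wulff-shape lower bound directly (Lemma~\ref{lem:spread_ini} and Proposition~\ref{spread_lower_first}): one takes a smooth star-shaped approximation $\widetilde W$ of $(1-\eps)W_{c_1}$, assumes by contradiction that the solution drops below a strict subsolution $\underline u$ on $t\widetilde W$ at some first time, and uses the fact that the dilated boundary $t\partial\widetilde W$ flattens locally to a half-plane with normal speed $\leq(1-\eps/3)c_1(\nu)$, which after a spatial--temporal shift contradicts the planar spreading speed from Theorem~\ref{th:planar_speeds}. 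Finally, to reach intermediate levels $q_{k-1}$, the paper does not try to get all $k$ at once: it reapplies this lower bound to the multistable problem between $q_{k-1}$ and~$0$, using Assumption~\ref{ass:same_shape} precisely to guarantee that the terrace connecting $q_{k-1}$ to~$0$ is the truncation $((q_j)_{k-1\leq j\leq K},(U_j^e)_{k\leq j\leq K})$ of the original one, so that its uppermost speed is $c_k$.
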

In~\cite[Proposition 1.6]{GR_2020}, we have exhibited an example of an equation for which 
the terrace in a direction is composed by two fronts, and the one in another 
direction is composed by only one front. This shows that
Assumption~\ref{ass:same_shape} is unfortunately not always fulfilled, 
hence Theorem~\ref{th:spread_cpct1} does not fully solve the question of 
spreading speeds for compactly supported initial data in the multistable spatially periodic framework.

Under only Assumption~\ref{ass:multi}, we are able to  derive a lower and an upper estimate on the spreading shapes,
but not to prove that those estimates always fit together to provide the precise description of the spreading shapes.
Still, there is also another situation, besides Assumption~\ref{ass:same_shape}, in which we are able to
derive a complete characterisation of the spreading shapes. The result naturally involves the Wulff shapes of the speeds of fronts connecting intermediate states.
	Actually, we will also need to consider terraces whose upper state is not $\ol p$.
	More precisely, we introduce the following.

\begin{defi}\label{def:Wp}
		Let $0 \leq p \leq \ol p$ be a linearly stable, periodic steady state.
		\begin{enumerate}[$(i)$]
				\item 
		If $p\not\equiv \ol p$, then we denote by $c[p] (e)$ the speed of the travelling front belonging to 
		a terrace $\mathcal{T}^e$ connecting $\overline{p}$ to~$0$ in a 
		direction~$e\in\Sph$,
		whose profile fulfils
		\begin{equation}\label{eq:through}
			U (\cdot, +\infty) \leq p (\cdot),\qquad
			\max_{x\in\R^N}(U (x, -\infty)- p(x)\big)>0.
		\end{equation}
		
		\item If $p\not\equiv 0$, then we denote by	$c_1^{p}(e)$ the speed of the uppermost front of a terrace $\mathcal{T}^e_{p}$ 
		connecting~$p$ to~$0$ in the direction $e$.
	\end{enumerate}
\end{defi}

The existence of a terrace $\mathcal{T}_p^e$ as in Definition~\ref{def:Wp} 
is still given by Theorem \ref{th:existence},
because the multistable Assumption~\ref{ass:multi} is preserved.
The platforms of $\mathcal{T}^e_p$ and $\mathcal{T}^e$ may be different.
We point out that the definitions of the speeds $c[p] (e)$ and~$c_1^{p}(e)$ depend on the choice of the terraces $\mathcal{T}^e$ and~$\mathcal{T}^e_{p}$.
However, thanks to our uniqueness result Theorem~\ref{th:uniqueness}, 
these speeds are well defined when respectively
$c_1^{\overline{p}}(e)$ and~$c_1^{p}(e)$ are positive (for the a-fortiori unique corresponding terrace),
and this will be the case in our application below.
Observe that if $p$ is the $k$-th platform of the terrace $\mathcal{T}^e$, then $c[p] (e)$ is the speed of the $k$-th front of $\mathcal{T}^e$. In particular, our last main result will be consistent with Theorem~\ref{th:spread_cpct1}.

The crucial assumption we need is that the Wulff shapes of the speeds~$c_1^{p}$ are of class~$C^1$. 

\begin{assump}\label{ass:tangentialC1}
	Equation \eqref{eq:parabolic} is of the multistable type, in the sense of Assumption~\ref{ass:multi}, 
	and moreover all (linearly) stable, periodic steady states between~$0$ and $\overline{p}$ are totally ordered, 
	i.e. they are given~by
	$$p_0 \equiv \overline{p} > p_1 > \cdots > p_M \equiv 0,$$
	for some positive integer $M$. 
%
Furthermore, the speeds $c_1^{p_k}$ given by Definition~\ref{def:Wp} satisfy:
	\begin{itemize}
		\item $c_1^{\overline{p}}(e) >0$ for all $e \in S^{N-1}$;
		\item for any $k\in\{0,\dots,M-1\}$ such that the function $c_1^{p_k}$ is strictly positive,  
		the Wulff shape $W_{c_1^{p_k}}$ is of class $C^{1}$.
	\end{itemize}
\end{assump}
\noindent 
We remark that the fact that the number of
stable, periodic steady states between~$0$ and $\overline{p}$ is finite is already granted by Assumption~\ref{ass:multi}, see
Proposition \ref{pro:finitep} below. Assumption~\ref{ass:tangentialC1} additionally requires that they are ordered.
Similarly to Assumption~\ref{ass:same_shape}, the positivity of the speeds $c_1^{\overline{p}}(e)$ 
ensures that all travelling fronts of the terrace $\mathcal{T}^e = \mathcal{T}_{\overline{p}}^e$ connecting $\overline{p}$ to~$0$ in any direction~$e$ have positive speeds, and in particular this terrace is unique up to shifts. The last condition in 
Assumption~\ref{ass:tangentialC1} is also well posed, because it only involves the speeds $c_1^{p_k}$ that are positive,
hence again they are uniquely defined.
Such condition is more intricate as it involves the speeds of travelling fronts which do not necessarily belong to the terrace connecting~$\overline{p}$ to~$0$. We believe that it is
in some sense ``generic'', since Wulff shapes appear to be smooth 
in numerical simulations. Yet, it is actually possible to construct counter-examples, as we will see in Corollary \ref{ce:C1}.

We also point out that, while the speeds $c_1^{p_k}$ are involved in Assumption~\ref{ass:tangentialC1}, it ultimately turns out that they 
do not appear in the spreading shapes of solutions, provided by the next result.

\begin{theo}\label{th:spread_cpct11}
	Under Assumption~\ref{ass:tangentialC1}, 
	let $u$ be a solution emerging from a  compactly supported 
	initial datum $0 \leq u_0 \leq \overline{p}$
	for which~\eqref{spread_start} holds.
	
	Then the spreading shapes of $u$ are given by the Wulff shapes of the speeds 
	of the terraces~$(\mathcal{T}^e)_{e \in S^{N-1}}$, in the following sense: for every $k\in\{1,\dots,M\}$ and $\eps\in(0,1)$, it holds
	$$\limsup_{t \to +\infty}
	\bigg( \sup_{x   \in\, \R^N\setminus   (1+\varepsilon)t  W_{c [p_k]}  } \big(u(t,x) -  p_k (x)\big)\bigg) \leq 0,$$
	$$
	\liminf_{t \to +\infty}\bigg( \inf_{x   \in  (1-\varepsilon)t  W_{c [p_k]}  } \big(u(t,x) -  p_{k-1} (x)
	\big)\bigg) \geq 0,
	$$
	where the $c[p_k]$ are given by Definition~\ref{def:Wp}. 
	In particular, for all $k\in\{0,\dots,M\}$ and any $\eps \in (0,1)$, one has

	$$
	\bigg(\sup_{x \in (1-\varepsilon) t W_{c [p_{k+1}]}  \setminus (1+\varepsilon)t  W_{c [p_{k}]}  } | u(t,x) -  p_k (x) |\bigg) = 0,$$
	where, in the cases $k=0,M$, it is understood that $W_{c [p_0]}:= \emptyset$ and $W_{c [p_{M+1}]} := \R^N$.
\end{theo}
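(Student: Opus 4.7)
I would prove the upper and lower bounds separately, with the lower one requiring an induction on $k\in\{1,\dots,M\}$; the uniform convergence in the annular region then follows by combining the upper bound at level $k$ with the lower bound at level $k+1$, using the nesting $W_{c[p_k]}\subseteq W_{c[p_{k+1}]}$, a consequence of the non-decreasing property of the terrace speeds.

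\textbf{Upper bound.} The $C^1$ assumption plays no role here. For each $e'\in\Sph$, I would choose $T=T(e')$ large enough that the compact support of $u_0$ and parabolic decay yield $u(T,\cdot)\leq \bar v^{e'}_0$, where $\bar v^{e'}_0$ is planar-like in direction $e'$, equal to $\bar p$ on $\{x\cdot e'\leq -L\}$ and equal to a small $\eta>0$ on $\{x\cdot e'>-L\}$. Letting $\bar v^{e'}$ denote the corresponding solution of~\eqref{eq:parabolic} and observing that the front of $\mathcal{T}^{e'}$ crossing $p_k$ has speed $c[p_k](e')$ by Definition~\ref{def:Wp}, Theorem~\ref{th:planar_speeds} gives
$$\limsup_{t\to+\infty}\sup_{x\cdot e'\geq (c[p_k](e')+\varepsilon/2)t}\big(\bar v^{e'}(t,x)-p_k(x)\big)\leq 0,$$
and this bound transfers to $u(T+t,\cdot)$ by comparison. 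Since the complement of $W_{c[p_k]}$ is the union over $e'\in\Sph$ of the half-spaces $\{x\cdot e'>c[p_k](e')\}$, a covering argument using compactness of $\Sph$ and the continuity of $e'\mapsto c[p_k](e')$ (which follows from the uniqueness Theorem~\ref{th:uniqueness} and standard stability of pulsating fronts) concludes the upper bound.

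\textbf{Lower bound, base case.} I would argue by induction that
$$\liminf_{t\to+\infty}\inf_{x\in (1-\varepsilon)tW_{c[p_k]}}\big(u(t,x)-p_{k-1}(x)\big)\geq 0.$$
For $k=1$ one has $c[p_1]=c_1^{\bar p}$ (the uppermost front of $\mathcal{T}^e_{\bar p}$ is precisely the front crossing $p_1$, since the totally ordered structure makes its lower state lie in $\{p_1,\dots,p_M\}$), so $W_{c[p_1]}=W_{c_1^{\bar p}}$ is $C^1$ by Assumption~\ref{ass:tangentialC1}. The invasion property~\eqref{spread_start} yields, for any $\eta>0$, a time $T$ and a ball $B_R$ on which $u(T,\cdot)\geq \bar p-\eta$. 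For each interior target $x^*\in (1-\varepsilon)tW_{c_1^{\bar p}}$, the $C^1$ property selects the unique supporting direction $e^*\in\Sph$ of $W_{c_1^{\bar p}}$ at the point where the outgoing ray exits, giving $x^*\cdot e^*<(1-\varepsilon)tc_1^{\bar p}(e^*)$. I would then construct a planar-like subsolution in direction $e^*$ whose datum fits below $u(T,\cdot)$ after a preliminary bootstrap of the invasion ball (iterating Theorem~\ref{th:planar_speeds} in direction $e^*$, or equivalently using Fife--McLeod style shifted front subsolutions), and apply Theorem~\ref{th:planar_speeds} to conclude that $u(T+t,x^*)\to \bar p(x^*)$. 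Continuity of $e^*(x^*)$ from the $C^1$ property delivers uniformity.

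\textbf{Inductive step and main obstacle.} For the step from $k$ to $k+1$, the nesting $W_{c[p_k]}\subseteq W_{c[p_{k+1}]}$ and the inductive hypothesis already give $u\geq p_k-\eta$ in the smaller shape, so the task reduces to extending this to the annulus $(1-\varepsilon)tW_{c[p_{k+1}]}\setminus (1-\varepsilon)tW_{c[p_k]}$. I would rerun the base-case argument with the terrace $\mathcal{T}^e_{p_k}$ in place of $\mathcal{T}^e_{\bar p}$: the inductive hypothesis furnishes a large seed on which $u$ is close to $p_{k-1}$, hence strictly above $p_k$, from which planar-like subsolutions in each direction $e$ are launched via the $C^1$ regularity of $W_{c_1^{p_k}}$ (Assumption~\ref{ass:tangentialC1}; directions in which $c_1^{p_k}$ fails to be positive are handled by comparison with lower-level inductive bounds). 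The main obstacle, which is the heart of the proof, is to reconcile the Wulff shape $W_{c_1^{p_k}}$ arising naturally in this subsolution analysis with the shape $W_{c[p_{k+1}]}$ appearing in the statement: in directions $e$ where $p_k$ is a platform of $\mathcal{T}^e_{\bar p}$, the uniqueness Theorem~\ref{th:uniqueness} forces $c_1^{p_k}(e)=c[p_{k+1}](e)$ so that the boundaries match locally; in directions $e$ where $p_k$ is skipped by $\mathcal{T}^e_{\bar p}$, one instead has $c[p_{k+1}](e)=c[p_k](e)$, which places the corresponding boundary piece of $W_{c[p_{k+1}]}$ inside the inductive region. Patching these two cases together via the $C^1$ supporting-hyperplane selection, and carefully verifying that the Wulff-shape boundaries are traversed continuously as $e$ varies, completes the induction.
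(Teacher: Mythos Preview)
Your upper bound is essentially the paper's Proposition~\ref{spread_upper}; note only that the paper uses the lower semi-continuity of $e\mapsto c_1(e)$ (Lemma~\ref{lem:lsc}) rather than full continuity of $e\mapsto c[p_k](e)$, which is all one needs for the finite-cover argument.

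The lower bound has a genuine gap. You propose to place a planar-like subsolution in direction $e^*$ below $u(T,\cdot)$, but a planar-like datum is, by definition, close to $\bar p$ on an entire half-space, while $u(T,\cdot)$ emerges from a compactly supported datum and is only close to $\bar p$ on a bounded ball. No such comparison is possible, and Theorem~\ref{th:planar_speeds} cannot be invoked directly. The phrase ``preliminary bootstrap of the invasion ball (iterating Theorem~\ref{th:planar_speeds})'' does not resolve this: iterating a planar result still requires a planar datum underneath at each step.

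The paper bridges this gap by a different mechanism. In Lemma~\ref{lem:spread_ini} it shows that a solution with large enough compactly supported datum stays above $\bar p-\eta\varphi_0$ on the expanding set $t\widetilde W$, where $\widetilde W$ is a smooth approximation of $W_{c_1}$. The argument is by contradiction at the first contact time $t_n$ on $\partial(t_n\widetilde W)$: as $t_n\to+\infty$ the boundary of $t_n\widetilde W$ flattens to a half-space with normal speed at most $(1-\eps/3)c_1(\nu)$, and \emph{only then} does comparison with a planar solution via Theorem~\ref{th:planar_speeds} yield a contradiction. The $C^1$ regularity is used here not to ``select a supporting direction'' at a target point, but to make this flattening uniform. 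For the inductive step the paper separates the analysis: it first proves a general lower bound (Proposition~\ref{spread_lower_induc}) in terms of an auxiliary set $\underline W_{p_k}=\mathrm{Conv}\big(\bigcup_{j\leq k}\Upsilon_{p_j}\big)$ built from the Wulff shapes $W_{c_1^{p_j}}$, via the same flattening device combined with a time-splitting argument (Lemma~\ref{lem:W>}); it then establishes the purely geometric identity $\underline W_{p_{k-1}}=W_{c[p_k]}$ (Proposition~\ref{prop:coincide}) by showing both sets share the same supporting hyperplanes, using the $C^1$ assumption through Proposition~\ref{prop:wulff_reg} and the dichotomy you identified (your Lemma~\ref{lem_coincide}). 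Your dichotomy is correct, but it enters as a set-theoretic computation after the PDE estimate, not woven into the subsolution construction.
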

Notice that a given steady state~$p_k$ in Theorem~\ref{th:spread_cpct11} may not necessarily be a platform of the propagating terraces, and whether this is the case or not may also depend on the direction. In particular, it may happen that $W_{c[p_k]} \equiv W_{c[p_{k+1}]}$ for some~$k$, 
in which case the convergence towards~$p_k$ given by the last statement of Theorem~\ref{th:spread_cpct11} 
would occur on an empty set.

\begin{rmk} Let us again make several remarks.
	\begin{enumerate}[$(i)$]
		
		\item Some condition on the initial datum is required for the conclusions of 
Theorems~\ref{th:spread_cpct1}, \ref{th:spread_cpct11} to hold true. Indeed, if for instance the initial datum $u_0$ lies in the basin of attraction of~$0$, then the solution would simply go to $0$ uniformly as $t \to +\infty$. Here we assumed the invasion condition~\eqref{spread_start},
that is, that the solution converges
locally uniformly to $\overline{p}$ as $t\to+\infty$. 
As a by-product of our proofs, 
we infer that for \eqref{spread_start} to hold it is sufficient that
 the speeds of the uppermost travelling fronts in all directions
are positive and that $u_0$ is ``sufficiently large'', in the sense that it is close enough to~$\overline{p}$ on a large enough ball. 
This is a consequence of Lemma~\ref{lem:spread_ini} below.
\item To recover a Freidlin-G\"artner formula from e.g.~Theorem \ref{th:spread_cpct11}, one observes 
that the sets~$W_{c [p_k]}$, for $1 \leq k \leq M$, 
rewrite~as
$$W_{c [p_k]} = \{ r e  \  | \ e \in S^{N-1} \text{ and } 0 \leq r \leq w_k (e)  \},$$
with
$$
w_k (e) := \min_{\substack{ e' \in S^{N-1}\\ e' \cdot e >0}} \frac{c [p_k] (e')}{e'\cdot e} .
$$
In particular, Theorem~\ref{th:spread_cpct11} implies that, for any $e \in S^{N-1}$,
the following limits hold, locally uniformly in $x\in\R^N$:
\[\begin{array}{rl}
\displaystyle \forall  c\in[0,w_1 (e)) , & 
\displaystyle \lim_{ t \to +\infty}| u(t, x +ct e)-\overline{p}( x +cte )|=0,\vspace{3pt}\\
\displaystyle \forall 1 \leq k \leq M-1,  \forall  c \in (w_k (e) , w_{k+1} (e)) ,  & 
\displaystyle \lim_{ t \to +\infty} | u(t, x +ct e)-p_k( x +ct e)|=0 ,\vspace{3pt}\\
\displaystyle \forall  c > w_M (e) , & 
\displaystyle \lim_{ t \to +\infty} u(t, x +cte ) = 0.
\end{array}\]
Roughly speaking, $w_{k+1}(e)$ is the asymptotic speed of propagation in direction~$e$ of the set where
$u\sim p_k$ . In the case $M=1$, then $c [p_1] (e)$ is the speed of the travelling front connecting~$\overline{p}$ to~$0$, and we recover the 
result known in the bistable setting~\cite{RossiFG}. 
Therefore, our result extends the standard Freidlin-G\"artner formula to multistable equations.
\item We point out that the aforementioned difficulties {\bf 1} and {\bf 2}, that led to the additional hypotheses in either Assumptions~\ref{ass:same_shape} or~\ref{ass:tangentialC1}, are related to the 
dimension. For instance, in dimension $N=1$, then the minimum in the Freidlin-G\"{a}rtner formula~\eqref{w*} 
for the multistable case is trivial as it is taken over a singleton set. As a matter of fact, 
when~$N=1$,
Assumption~\ref{ass:multi} and the positivity of the speeds are enough to establish a leftward and rightward
spreading result 
for compactly supported initial~data. The proof basically proceeds as in the planar-like case, almost independently in each of the opposite directions. For the sake of conciseness, we chose not to include it~here.
\end{enumerate}
\end{rmk}

In conclusion, in the general multistable, spatially periodic case, we are only able to derive
a lower and an upper bound on the spreading speeds in all directions, cf.~Propositions~\ref{spread_upper} and~\ref{spread_lower_induc} below. 
It remains an open problem whether these bounds always coincide, and if not, what would then be the exact spreading speeds.
However, let us stress that the situations where our theorems do not apply, that is, 
	where neither Assumptions~\ref{ass:same_shape} nor~\ref{ass:tangentialC1} hold,
are when it simultaneously happens that the number of platforms of the terraces varies depending on the direction, and 
moreover some of the Wulff shapes are not regular.
We believe that these situations are rather pathological.

\paragraph{Plan of the paper.}
In Section~\ref{sec:planar}, we prove both the uniqueness of the propagating terrace in any given direction, and our spreading result for planar-like initial data. The reason we gather both proofs in a single section is that they proceed quite similarly, by ``gluing'' some well chosen perturbations of the travelling front parts of the terrace into sub and supersolutions. We also include a procedure for the determination of the platforms of this terrace, which may be of independent interest.

Sections~\ref{sec:cpct1} and~\ref{sec:cpct} deal with the case of compactly supported initial data, respectively under Assumptions~\ref{ass:same_shape} and~\ref{ass:tangentialC1}. We will first show some lower and upper bounds on the spreading speeds in the general case, which will quickly turn out to coincide under Assumption~\ref{ass:same_shape}. In Section~\ref{sec:cpct} we will work under Assumption~\ref{ass:tangentialC1}, where spreading speeds can again be obtained. Due to the technicality of these two set of assumptions, we will finally provide a short discussion in Section~\ref{sec:assumptions} and point out to some counter-examples to either.



\section{The planar case}\label{sec:planar}

In this section, we derive both the uniqueness of the propagating terrace, 
Theorem~\ref{th:uniqueness}, as well as
the asymptotic spreading speed of solutions associated with planar-like initial data, Theorem~\ref{th:planar_speeds}.
The two proofs rely on rather similar arguments.

\subsection{Preliminaries}\label{sec:prelim}

The linear stability (resp.~instability) of a periodic steady state~$p$ means 
that the periodic principal eigenvalue $\lambda^p$ of the linearised operator around~$p$
is strictly negative (resp.~positive),
the latter being defined as the unique real number
for which the following eigenvalue problem admits a solution:
\Fi{periodicpe}
\left\{ 
\begin{array}{l}
	\dv (A(x) \nabla \varphi ) + \partial_u f (x, p (x))  \varphi  = \lambda^p \varphi , \quad   x \in \R^N,\vspace{3pt}\\
	\varphi \text{ is periodic and positive}.
\end{array}
\right.
\Ff
The existence of $\lambda^p$, together with its simplicity, is provided by the Krein-Rutman theory~\cite{KR}.
The function $\vp$ is called the periodic principal eigenfunction, and we will always assume the normalisation condition
$$\max_{\R^N} \varphi = 1.$$

We start with the proof of the well known fact that the linear stability allows one to perturb 
solutions to get sub and supersolutions, in particular yielding the asymptotic stability.
\begin{lem}\label{lem:perturb0}
Let $p$ be a linearly stable, periodic steady state of~\eqref{eq:parabolic} and 
let $\vp$ be the 
	periodic principal eigenfunction of the problem~\eqref{periodicpe} 
	normalised by $\max_{\R^N} \varphi = 1$. 
	Then there exists $\delta >0$ such that, for any 
	 $\eta,\sigma\in[0,\delta]$,   
the following hold:
	\begin{enumerate}[$(i)$]
		\item any function $u_0$ satisfying $|u_0-p|\leq \delta\vp$ 
		lies in the basin of attraction of~$p$, that is, the solution emerging from it
		converges uniformly to~$p$ as~$t\to+\infty$;
		\item if $u$ is a supersolution (resp.~subsolution) of~\eqref{eq:parabolic}, then the function 
		$$u^\eta:=u+\eta \vp e^{-\sigma t}\qquad\text{(resp.~$u^\eta:=u-\eta \vp e^{-\sigma t}$)}$$
		satisfies
		$$
		\partial_t u^\eta \geq \dv (A(x) \nabla u^\eta ) + f (x,u^\eta) + \delta \eta e^{-\sigma t} $$
		$$\text{(resp. \,$\partial_t u^\eta \leq \dv (A(x) \nabla u^\eta ) + f (x,u^\eta) - \delta \eta e^{-\sigma t}$\,)}
		$$
		for all $t,x$ such that $|u(t,x)-p(x)|\leq\delta$.
	\end{enumerate}
	%
\end{lem}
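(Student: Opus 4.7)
The plan is to prove (ii) by a direct computation that exploits the eigenvalue equation~\eqref{periodicpe} together with the strict negativity of $\lambda^p$, and then to derive (i) from (ii) by a comparison argument taking $u\equiv p$ as both a sub- and a supersolution.

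Before starting, I would record two elementary but essential facts about $\varphi$. By the elliptic strong maximum principle applied to~\eqref{periodicpe}, the eigenfunction $\varphi$ is strictly positive, and since it is continuous and periodic, there is a constant $m>0$ with $m\leq \varphi(x)\leq 1$ for all $x\in\R^N$. Moreover, since $f\in C^1$ and is periodic in $x$, the derivative $\partial_u f(x,\cdot)$ admits a modulus of continuity $\omega$, uniform in $x$, satisfying $\omega(r)\to 0$ as $r\to 0^+$.

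For (ii), consider the supersolution case (the subsolution case being symmetric). Set $u^\eta(t,x):=u(t,x)+\eta\varphi(x)e^{-\sigma t}$ and denote $\mc{L}v:=\partial_t v-\dv(A\nabla v)-f(x,v)$. A direct computation gives
\[
\mc{L}u^\eta=\mc{L}u-\eta e^{-\sigma t}\bigl[\sigma\varphi+\dv(A\nabla\varphi)\bigr]+f(x,u)-f(x,u+\eta\varphi e^{-\sigma t}).
\]
Using~\eqref{periodicpe} to substitute $\dv(A\nabla\varphi)=\lambda^p\varphi-\partial_u f(x,p)\varphi$, and writing the last difference via the mean value theorem as $-\eta\varphi e^{-\sigma t}\,\partial_u f(x,\zeta)$ for some $\zeta$ lying between $u$ and $u+\eta\varphi e^{-\sigma t}$, and recalling that $\mc{L}u\geq 0$, I obtain
\[
\mc{L}u^\eta\geq \eta e^{-\sigma t}\varphi\bigl[|\lambda^p|-\sigma+\partial_u f(x,p)-\partial_u f(x,\zeta)\bigr].
\]
Now, wherever $|u(t,x)-p(x)|\leq\delta$, one has $|\zeta-p|\leq \delta+\eta\varphi e^{-\sigma t}\leq 2\delta$ (using $\eta\leq\delta$, $\varphi\leq 1$, $t\geq 0$, $\sigma\geq 0$), and hence the bracket is at least $|\lambda^p|-\sigma-\omega(2\delta)$. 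The whole game is then to choose $\delta$ small enough so that simultaneously $\sigma\leq\delta\leq |\lambda^p|/4$, $\omega(2\delta)\leq |\lambda^p|/4$, and $\delta\leq m|\lambda^p|/4$. With such a choice the bracket is bounded below by $|\lambda^p|/2$, so $\mc{L}u^\eta\geq \eta e^{-\sigma t}\cdot m\,|\lambda^p|/2\geq \delta\eta e^{-\sigma t}$, which is what (ii) asserts. The subsolution case follows by the identical computation with signs reversed.

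For (i), apply (ii) with $u\equiv p$, which is trivially both a sub- and a supersolution and for which $|u-p|\equiv 0\leq\delta$ holds everywhere, so the condition in (ii) is satisfied globally. This produces a classical supersolution $v(t,x):=p(x)+\delta\varphi(x)e^{-\sigma t}$ and a classical subsolution $w(t,x):=p(x)-\delta\varphi(x)e^{-\sigma t}$ for some fixed $\sigma\in(0,\delta]$. Any $u_0$ with $|u_0-p|\leq\delta\varphi$ satisfies $w(0,\cdot)\leq u_0\leq v(0,\cdot)$, so the parabolic comparison principle yields $w(t,x)\leq u(t,x)\leq v(t,x)$ for all $t\geq 0$, which forces $|u(t,x)-p(x)|\leq \delta\varphi(x)e^{-\sigma t}\leq\delta e^{-\sigma t}$. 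Hence $u(t,\cdot)\to p$ uniformly on~$\R^N$ as $t\to+\infty$, establishing (i).

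The main technical point, rather than an obstacle, is coordinating the several smallness conditions on $\delta$ (controlling $\sigma$, the modulus of continuity $\omega(2\delta)$, and the lower bound $m$ of $\varphi$) so that a single $\delta$ makes all estimates work simultaneously; the strict sign $\lambda^p<0$ provides exactly the room needed to absorb these terms.
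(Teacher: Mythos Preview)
Your proof is correct and follows essentially the same approach as the paper: the same direct computation using the eigenvalue equation~\eqref{periodicpe}, the $C^1$ regularity of~$f$ to control $\partial_u f(x,\zeta)-\partial_u f(x,p)$ when $|\zeta-p|\leq 2\delta$, and the lower bound $\min\varphi>0$, followed by deducing~(i) from~(ii) via comparison with $p\pm\delta\varphi e^{-\sigma t}$. The only cosmetic difference is that the paper handles the subsolution case by the change of variables $u\mapsto \overline{p}-u$ rather than repeating the computation with reversed signs.
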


\begin{proof}
	Statement $(i)$ immediately follows from $(ii)$, by taking
	$u^\eta:=p\pm\delta \vp e^{-\delta t}$ and applying the comparison principle.
	
	Let us prove statement $(ii)$ when $u$ is a supersolution. 
	The subsolution case then follows by considering  
	$\overline{p}(x) - u(t,x)$, which is a supersolution  
	of \eqref{eq:parabolic} with $f (x,u)$ replaced by
	$f (x, \overline{p} (x) - u)$ 
	(the linear stability of $p$ is inherited by $\ol p- p$).
	
	
	Since $f$ is of class $C^1$ and it is spatially periodic, 
	for $\delta' >0$ small enough it holds that
	\Fi{fC1}
	|s-p (x)|\leq 2 \delta' \ \implies \ 
	| \partial_u f (x, s) - \partial_u f (x, p(x)) | \leq- \frac{ \lambda^p}{2}.
	\Ff
	%
	Let~$u$ be a supersolution of~\eqref{eq:parabolic} satisfying 
	$|u (t,x)-p(x)|\leq\delta'$ for some $t>0$ and $x\in\R^N$.
	For $\eta,\sigma\geq0$,~set
	$$u^\eta:=u+\eta\vp e^{-\sigma t}.$$ We compute
	\begin{align*}
		\partial_t u^\eta - \dv (A(x) \nabla  u^\eta) &=
		f(x,u)-  \eta e^{-\sigma t}( \dv (A (x) \nabla \varphi) +\sigma\varphi)\\
		& = f(x,u)+\big[\partial_u f(x,p(x))-\lambda^p-\sigma)\big]\eta\varphi e^{-\sigma t}.
	\end{align*}
	Restricting to 
	$\eta\leq\delta'$ we have $|u^\eta(t,x)-p(x)|\leq2\delta'$
	and thus using~\eqref{fC1} and $\lambda_p<0$ we~derive 
	$$\partial_t u^\eta - \dv (A(x) \nabla  u^\eta) - f (x,  u^\eta)
	\geq  -\Big(\frac{\lambda^p}2+\sigma\Big)\eta\vp e^{-\sigma t}.$$
	Imposing $\sigma\leq-\lambda^p/4$ we get
	$$-\Big(\frac{\lambda^p}2+\sigma\Big)\eta\vp e^{-\sigma t}
	\geq-\frac{\lambda^p}4\,\eta\vp e^{-\sigma t}.$$
	Taking 
	$$\delta:=\min\Big\{\delta',-\frac{\lambda^p}4\min\varphi\Big\},$$
	this proves the result.
	%
\end{proof}

Lemma \ref{lem:perturb0} readily implies the following result, announced in the Introduction.

\begin{prop}\label{pro:finitep}
	Under Assumption \ref{ass:multi}, the number of stable,
	periodic steady states between~$0$ and~$\ol p$ is finite.
\end{prop}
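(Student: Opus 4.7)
The plan is to argue by contradiction: assume there exists a sequence $(p_n)_{n\in\N}$ of pairwise distinct stable, periodic steady states between~$0$ and~$\ol p$, and derive a contradiction by extracting a limit and using Assumption~\ref{ass:multi}.

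First I would set up compactness. Each $p_n$ satisfies the elliptic equation
$$\dv(A(x)\nabla p_n) + f(x,p_n) = 0, \qquad 0\leq p_n\leq \ol p,$$
with periodic boundary conditions. Since $f$ is $C^1$ and $A$ is smooth, standard elliptic regularity (applied on the torus $\R^N/\Z^N$) yields a uniform bound in $C^{2,\alpha}$. By Arzelà-Ascoli, some subsequence, still denoted $(p_n)$, converges in $C^2$ uniformly on $\R^N$ to a periodic function $p_\infty$ with $0\leq p_\infty\leq \ol p$, which is itself a periodic steady state of~\eqref{eq:parabolic}. Passing to a further subsequence, we may assume $p_n\not\equiv p_\infty$ for all $n$. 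By Assumption~\ref{ass:multi}, the state $p_\infty$ is either linearly stable or linearly unstable, and I would treat these two cases separately.

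If $p_\infty$ is linearly stable, I would apply Lemma~\ref{lem:perturb0}$(i)$ to $p_\infty$: there exists $\delta>0$ such that every $u_0$ with $|u_0-p_\infty|\leq \delta\vp_\infty$ lies in the basin of attraction of~$p_\infty$. For $n$ large enough, the uniform convergence $p_n\to p_\infty$ gives $\|p_n-p_\infty\|_\infty\leq \delta\min \vp_\infty$, so $p_n$ itself belongs to this basin. But the solution of~\eqref{eq:parabolic} with initial datum $p_n$ is stationary and equal to $p_n$, so the convergence to $p_\infty$ as $t\to+\infty$ forces $p_n\equiv p_\infty$, contradicting our choice of subsequence.

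If instead $p_\infty$ is linearly unstable, i.e.~$\lambda^{p_\infty}>0$, I would invoke continuity of the periodic principal eigenvalue with respect to the zero-order coefficient of the operator. Since $p_n\to p_\infty$ uniformly and $\partial_u f$ is continuous, the potentials $\partial_u f(\cdot,p_n(\cdot))$ converge uniformly to $\partial_u f(\cdot,p_\infty(\cdot))$, hence by the standard Krein-Rutman/perturbation theory for periodic principal eigenvalues one has $\lambda^{p_n}\to \lambda^{p_\infty}>0$. This contradicts $\lambda^{p_n}<0$, which is the linear stability of $p_n$. The main obstacle (minor, but worth noting) is the continuity of $\lambda^{p_n}$, which I would either cite from the standard periodic principal eigenvalue theory or establish directly by testing the eigenvalue problem~\eqref{periodicpe} for $p_\infty$ against the eigenfunction for $p_n$ (or vice versa) and exploiting the $C^0$-convergence of the potentials.
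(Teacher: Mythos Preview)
Your proposal is correct and follows essentially the same approach as the paper: extract a convergent subsequence via elliptic compactness, rule out a linearly unstable limit by continuity of the periodic principal eigenvalue (the paper cites \cite{Kato} for this), and rule out a linearly stable limit using the basin of attraction provided by Lemma~\ref{lem:perturb0}. The only cosmetic difference is that the paper first records that linearly stable states are isolated and then shows the limit must be stable, whereas you treat the two cases for the limit in parallel; the content is the same.
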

\begin{proof}
	Consider a stable, periodic steady state $\t p$ between $0$ and $\ol p$.
	By Assumption \ref{ass:multi}, $\t p$ is linearly stable.
	Then by Lemma \ref{lem:perturb0},
	there is a $L^\infty$ neighbourhood of $\t p$ which belongs to the basin of attraction of $\t p$.
	Thus,  in this neighbourhood, there cannot be other steady states than $\t p$.
	
	Assume now by contradiction that there is an infinite number of stable
	periodic steady states between $0$ and $\ol p$.
	Then, by parabolic estimates, there exists a sequence of distinct stable,
	periodic steady states that converges uniformly 
	to some periodic, steady state $\t p$ between~$0$ and $\overline{p}$. 
	By the continuity of the periodic principal eigenvalue with respect to the~$L^\infty$ convergence of the coefficients,
	see e.g.~\cite{Kato}, one infers that $\t p$ cannot be linearly unstable, hence it is necessarily linearly stable,
	owing to Assumption \ref{ass:multi}.
	But this contradicts the property that the stable, periodic steady states are isolated,
	showed before.
\end{proof}

In order to apply Lemma~\ref{lem:perturb0} in the next subsection, we show that the platforms of any terrace are
	necessarily linearly stable. Indeed, while this is the case of the terrace provided by~\cite{GR_2020}, reclaimed in Theorem~\ref{th:existence}, 
	at this stage of the paper we are yet to prove that there are no other terraces.
\begin{prop}\label{pro:terrace_allstable}
Under Assumption~\ref{ass:multi}, let $\mathcal{T} = ( (q_k)_{0 \leq k \leq K}, (U_k)_{1 \leq k \leq K})$ be a propagating terrace connecting $\overline{p}$ to $0$ in a direction $e \in S^{N-1}$. Then all the platforms $q_k$ are linearly stable, periodic steady states of \eqref{eq:parabolic}.	
\end{prop}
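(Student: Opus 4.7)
The proposition is immediate for $k=0$ and $k=K$, since $q_0\equiv\ol p$ and $q_K\equiv 0$ are linearly stable by Assumption~\ref{ass:multi}. My plan for the intermediate platforms $q_k$, $1\leq k\leq K-1$, is to argue by contradiction: if some such $q_k$ is not linearly stable, Assumption~\ref{ass:multi} forces it to be linearly unstable, i.e.\ the principal eigenvalue $\lambda^{q_k}$ of~\eqref{periodicpe} is strictly positive, with eigenfunction~$\varphi_{q_k}$. I would then show this simultaneously forces $c_k>0$ and $c_{k+1}<0$, in direct contradiction with the nondecreasing ordering $c_k\leq c_{k+1}$ required by Definition~\ref{def:terrace}.

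The central step is the ``counter-propagation'' dichotomy already alluded to in the Remark following Theorem~\ref{th:existence}: for any pulsating front in the direction $e$ connecting periodic steady states $q^+>q^-$ with speed $c$, if $q^-$ is linearly unstable then $c>0$, whereas if $q^+$ is linearly unstable then $c<0$. Applied to the two fronts adjacent to~$q_k$---namely $U_k$, whose \emph{lower} limit is $q_k$, giving $c_k>0$, and $U_{k+1}$, whose \emph{upper} limit is~$q_k$, giving $c_{k+1}<0$---this immediately yields the desired contradiction. The dichotomy can be invoked directly from~\cite[Theorem 2.4]{W02} or~\cite{BHR1}, but I would also provide a self-contained sketch.

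For the first item (the second is symmetric, for instance by applying the first to the equation satisfied by $\ol p-u$ after reflecting $e$), suppose $q^-$ is linearly unstable and, for contradiction, $c<0$ (the case $c=0$ is treated separately below). Then the entire solution $u(t,x):=U(x,x\cdot e-ct)$ satisfies $u(t,\cdot)\to q^-$ uniformly as $t\to+\infty$ by Proposition~\ref{prop:cv}, with $u>q^-$ strict. Setting $w:=u-q^-$, an argument parallel to Lemma~\ref{lem:perturb0} but adapted to the \emph{unstable} regime---namely comparing with the ansatz $\eps\varphi_{q^-}e^{\sigma t}$ for any $0<\sigma<\lambda^{q^-}$---shows that this ansatz is a strict subsolution of the equation linearised around $q^-$, as long as $\|w\|_\infty$ stays small. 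Once $t$ is large enough that $w(t,\cdot)$ lies in this linearised regime, the Harnack inequality and periodicity of $w(t_0,\cdot)$ yield a lower bound $w(t_0,\cdot)\geq c_0\varphi_{q^-}$, and comparison then forces $w(t,\cdot)\geq c_0\varphi_{q^-}e^{\sigma(t-t_0)}$ for all later times, in direct contradiction with $w\to 0$.

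The main obstacle is the degenerate case $c=0$, where $v(x):=U(x,x\cdot e)$ is a stationary solution and the above time-dynamic argument fails. I would handle it via the elliptic counterpart of the same subsolution strategy, applied in the half-space $\{x\cdot e>R\}$ for $R$ large: since $v\to q^-$ there, the function $v-q^-$ would be a positive solution of the equation linearised around~$q^-$ (up to a nonlinear perturbation negligible in this regime) that decays to zero as $x\cdot e\to+\infty$, whose existence is precluded by the positivity of $\lambda^{q^-}$ via a generalised principal eigenvalue argument. Alternatively, one may simply invoke~\cite{W02,BHR1}, whose counter-propagation statements already subsume the zero-speed case and thus bypass this technicality.
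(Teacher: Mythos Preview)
Your overall strategy coincides with the paper's: assume some intermediate platform $q_k$ is not linearly stable, hence linearly unstable by Assumption~\ref{ass:multi}, and derive $c_k>0>c_{k+1}$ from the counter-propagation dichotomy, contradicting the terrace ordering. The paper packages this dichotomy as a separate Lemma~\ref{lem:counter} and proves it by a different route: after shifting by~$q$, it minorises the nonlinearity by an explicit spatially periodic KPP term (of the form $(\partial_u g(x,0)-\lambda^q/2)u-\gamma u^2$), and then invokes the hair-trigger spreading result of~\cite{BHR1} for KPP equations with an unstable zero state. This yields $\inf_x\liminf_{t\to+\infty}(U_1(x,x\cdot e-c_1t)-q(x))>0$ directly, forcing $c_1>0$ without splitting off the zero-speed case.

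Your self-contained sketch, however, has a genuine gap in the $c<0$ branch. You write that ``the Harnack inequality and periodicity of $w(t_0,\cdot)$ yield a lower bound $w(t_0,\cdot)\geq c_0\varphi_{q^-}$'', but $w(t_0,\cdot)=u(t_0,\cdot)-q^-$ is \emph{not} periodic: it is a front profile with $w(t_0,x)\to0$ as $x\cdot e\to+\infty$, so no uniform positive lower bound in terms of the periodic eigenfunction $\varphi_{q^-}$ can hold on all of~$\R^N$. Consequently the global comparison with $\eps\varphi_{q^-}e^{\sigma t}$ fails. The standard repair is to replace $\varphi_{q^-}$ by the Dirichlet principal eigenfunction on a sufficiently large ball (so that the corresponding eigenvalue is still positive), extended by zero; then $w(t_0,\cdot)>0$ on the compact ball suffices to initialise the comparison. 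With this fix your argument goes through and also covers $c=0$ (the growing subsolution stays compactly supported), so the separate elliptic treatment becomes unnecessary. Since you do offer the alternative of citing~\cite{W02,BHR1}, which the paper itself points to, the proposal is acceptable at that level; but the sketch as written should not be presented as a proof without the correction above.
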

This is an immediate consequence of the fact that all periodic steady states of \eqref{eq:parabolic} are either linearly stable or linearly unstable, by Assumption~\ref{ass:multi}, of the ordering of the speeds involved in a propagating terrace, and of the following lemma.
\begin{lem}\label{lem:counter}
Let $q$ be a linearly unstable periodic steady state of \eqref{eq:parabolic}, and $U_1$, $U_2$ be two pulsating travelling fronts 
satisfying
$$U_1 (-\infty) = p_1 > U_1 (\cdot) > U_1 (+\infty) = q,$$
$$U_2 (-\infty) = q > U_2 (\cdot) > U_2 (+\infty) = p_2,$$
where $p_1$, $p_2$ are periodic steady states with $p_1 > q > p_2$. Then $U_1$, $U_2$ have respectively a positive and a negative speed. 	
\end{lem}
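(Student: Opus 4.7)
The argument exploits the linear instability of~$q$. Let $(\lambda^q,\varphi)$ denote the periodic principal eigenpair at~$q$, with $\lambda^q>0$ and $\varphi>0$. A computation parallel to Lemma~\ref{lem:perturb0} (with signs reversed) shows that, for all $\eta>0$ sufficiently small, $q+\eta\varphi$ is a stationary strict subsolution of~\eqref{eq:parabolic} and $q-\eta\varphi$ is a stationary strict supersolution. A standard construction then furnishes a \emph{compactly supported} subsolution $\underline u_0\geq q$ with $\underline u_0>q$ on some sufficiently large ball, whose forward parabolic evolution $v(t,\cdot)$ is monotone nondecreasing in~$t$ and converges locally uniformly on~$\R^N$, as $t\to+\infty$, to a periodic stable steady state $\hat p>q$; this convergence reflects the positive linear spreading speed associated with the instability of~$q$.

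I prove $c_1>0$ by contradiction; the statement $c_2<0$ then follows by the same argument applied to the reflected profile $(x,z)\mapsto U_2(x,-z)$, which is a pulsating front from $p_2$ (at~$-\infty$) to $q$ (at~$+\infty$) in direction~$-e$ with speed~$-c_2$. Assume first $c_1<0$, and set $u_1(t,x):=U_1(x,x\cdot e-c_1 t)$. Since $c_1\neq 0$, the convergence $U_1(x,z)\to p_1(x)$ as $z\to-\infty$ is uniform in~$x$, so the half-space $\{x\cdot e\leq c_1 T-R\}$ on which $u_1(T,\cdot)\geq p_1-\delta$ contains any prescribed compact $K\subset\R^N$ once $T$ is sufficiently negative (because $c_1 T\to+\infty$ as $T\to-\infty$). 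Choosing $K$ to contain the support of $\underline u_0$ and $\delta$ small enough, one obtains $u_1(T,\cdot)\geq\underline u_0$ on~$\R^N$. Parabolic comparison then gives $u_1(t,\cdot)\geq v(t-T,\cdot)$ for every $t\geq T$, and sending $T\to-\infty$ at fixed $t$ yields $u_1(t,x)\geq\hat p(x)>q(x)$ locally uniformly in~$x$, contradicting the pointwise convergence $u_1(t,x)\to q(x)$ as $t\to+\infty$.

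In the remaining case $c_1=0$, the pulsating front $u_1(x)=U_1(x,x\cdot e)$ is stationary and the time-translation strategy above fails. I exploit instead the spatial periodicity: choose $y_n\in\Z^N$ with $y_n\cdot e\to+\infty$ and consider the lattice translates $\tilde u_n(x):=u_1(x+y_n)$, which are again stationary solutions by periodicity of $A$ and $f$ and converge locally uniformly to~$q$ thanks to the pointwise limit $U_1(x,z)\to q(x)$ as $z\to+\infty$ combined with elliptic estimates. Setting $\phi_n:=\tilde u_n-q>0$ and normalizing $\Phi_n:=\phi_n/\phi_n(0)$, Harnack's inequality provides a locally uniformly bounded sequence whose limit $\Phi_\infty>0$ solves the linearized elliptic equation $\dv(A\nabla\Phi_\infty)+f_u(\cdot,q)\Phi_\infty=0$ on~$\R^N$. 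Writing $\Phi_\infty=\rho\varphi$, a short computation shows that $\rho>0$ satisfies a divergence-form equation of the form
\[
\dv(A\nabla\rho)+2\,(A\nabla\varphi/\varphi)\cdot\nabla\rho=-\lambda^q\rho<0,
\]
and a Liouville-type argument, based on the periodicity of the coefficients and an extraction along a minimizing sequence, produces the contradiction. This last case is the main obstacle: the stationary front carries only pointwise convergence to~$q$, ruling out both time-translation comparison and any global uniform-in-space comparison, so one must reduce to a statement about the linearized elliptic equation and then exploit $\lambda^q>0$ via a spectral/maximum-principle argument on the conjugated divergence-form operator.
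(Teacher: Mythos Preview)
Your approach is genuinely different from the paper's. The paper does not split into cases on the sign of~$c_1$: it observes that $\tilde u:=U_1(x,x\cdot e-c_1 t)-q(x)$ is a nonnegative supersolution of a KPP-type periodic equation (obtained by replacing $f(x,q+u)-f(x,q)$ by $(\partial_u f(x,q)-\lambda^q/2)u-\gamma u^2$ for large~$\gamma$), for which~$0$ is still linearly unstable. A spreading result from~\cite{BHR1} then yields $\inf_x\liminf_{t\to+\infty}\tilde u(t,x)>0$, which directly forces $c_1>0$. This is shorter and uniform in the sign of~$c_1$, at the cost of invoking an external result. Your argument is more self-contained in spirit, and the $c_1<0$ case is essentially correct---though note that ``compactly supported'' should read ``equal to~$q$ outside a compact set'', and your claim that the limit~$\hat p$ is a \emph{periodic stable} steady state is neither justified nor needed: the monotone limit~$\hat p$ is merely a steady state with $\hat p>q$ pointwise, and that already gives the contradiction.

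The genuine gap is in your $c_1=0$ case. Your linearisation correctly produces a positive entire solution $\Phi_\infty$ of $\dv(A\nabla\Phi_\infty)+\partial_u f(\cdot,q)\Phi_\infty=0$, and the conjugation $\rho=\Phi_\infty/\varphi$ yields $\dv(\varphi^2 A\nabla\rho)=-\lambda^q\varphi^2\rho<0$. However, your ``extraction along a minimizing sequence'' only gives a contradiction when $m:=\inf_{\R^N}\rho>0$ (then the translated limit has an interior minimum at which the equation fails). If $m=0$, the translated and Harnack-controlled sequence simply converges to~$0$, which is consistent and yields no contradiction; nothing in your sketch rules this out. The statement you need---that a periodic elliptic operator with strictly positive periodic principal eigenvalue admits no positive entire solution of $L\Phi=0$---is true, but it amounts to the identification of the periodic principal eigenvalue with the generalised one on~$\R^N$, which is a nontrivial result (see e.g.\ the works of Berestycki--Rossi or Berestycki--Hamel--Rossi) rather than a two-line minimising-sequence argument. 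You should either cite such a result explicitly, or avoid the case split altogether by adopting the KPP comparison as in the paper.
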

This is closely related to the counter-propagation assumption introduced in~\cite{FZ}; see also~\cite{DHZ17,Ducrot2} for related arguments. For the sake of completeness, we include a short proof here.
\begin{proof}[Proof of Lemma~\ref{lem:counter}]
	We only consider the front $U_1$ and show that its speed, which we denote by $c_1$, is positive. The other case can be handled in the same way up to a simple change of variable.
	
	First of all, notice that if $u(t,x)$ solves~\eqref{eq:parabolic}, then $\tilde{u} (t,x) = u(t,x) - q(x)$ is a solution of the
	equation
\Fi{=g}
	\partial_t \tilde{u} = \dv (A(x) \nabla \tilde{u} ) + g(x,\tilde{u}),\qquad
	\text{with}\quad g(x,u):=\dv ( A(x) \nabla q (x) ) + f (x,q (x)+u ).
\Ff
The nonlinearity $g$ is still spatially periodic and of class $C^1$. Moreover, $0$ is a steady state for the above equation
and, since the linearised operator around it is $\dv (A(x) \nabla \varphi ) + \partial_u f (x, q)$,
i.e. the linearised operator of the original equation around~$q$, it is linearly unstable with periodic 
principal eigenvalue $\lambda^q>0$.
We now replace $g(x,u)$ by 
$$
\tilde g(x,u):=\Big(\partial_u g(x,0)-\frac{\lambda^q}2\Big)u-\gamma u^2,
$$
with $\gamma>0$ large enough so that $\tilde g(x,u)\leq g(x,u)$ for all $x \in \mathbb{R}^N$ and $u \in [0, \max ( \overline{p} - q)]$, which is possible because $g$ is periodic and $C^1$.
%
For this new nonlinearity, the periodic principal eigenvalue of the linearised operator around $0$ is
$\lambda^q/2$, hence $0$ is still linearly unstable, and now $\tilde g$ fulfils the KPP condition: $u\mapsto \tilde g(x,u)/u$ is decreasing.
Summing up, we have that any solution $\tilde u$ to~\eqref{=g} with $0 \leq \tilde{u} \leq \overline{p} -q$
is a supersolution of a KPP-type periodic equation for which $0$ is unstable.	
	Therefore, according to~\cite{BHR1}, if $\tilde u\not\equiv0$ then one has
	$$\inf_{x \in\mathbb{R}^N} \Big(\liminf_{t \to +\infty} \tilde{u} (t,x)\Big) >0.$$
	Applying this to $\tilde u(t,x)=U_1 (x , x \cdot e - c_1 t ) - q (x)$ yields
		$$\inf_{x \in\mathbb{R}^N} \Big(\liminf_{t \to +\infty}   U_1 (x , x \cdot e - c_1 t ) - q (x) \Big) >0.$$
		Due to $U_1 (\cdot , +\infty) = q$, we reach the wanted conclusion that necessarily $c_1 >0$.
			\end{proof}

Finally, in our proof of uniqueness, we need to handle (and eventually rule out) the possibility of a terrace 
	having some fronts with zero speed. To do this, we make the asymptotic of fronts stated in Definition~\ref{def:puls} more precise.
\begin{prop}\label{prop:cv}
	Let $u(t,x) := U (x, x\cdot e - ct)$ be a pulsating travelling front connecting $q_1 > q_2$ with speed $c \in \mathbb{R}$ in a direction $e \in S^{N-1}$. Then $u(t,x)$ 
	converges to $q_1$, resp. $q_2$, locally uniformly in $t$ as $x\cdot e \to -\infty$, resp. $x\cdot e \to +\infty$. 
	\end{prop}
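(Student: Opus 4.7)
The plan is to argue by contradiction, exploiting spatial periodicity to reduce the assertion to a compactness/uniqueness step for limits of translates of $u$. Suppose the convergence $u(t,x) \to q_1(x)$ as $x\cdot e \to -\infty$ fails locally uniformly in $t$; then there exist $\eta>0$, a compact time interval $I \subset \R$, and sequences $t_n \in I$, $x_n \in \R^N$ with $x_n \cdot e \to -\infty$ and $|u(t_n, x_n) - q_1(x_n)| \geq \eta$.

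Decompose $x_n = h_n + y_n$ with $h_n \in \Z^N$ and $y_n \in [0,1)^N$. Along a subsequence, $t_n \to t_\infty$ and $y_n \to y_\infty$, while $h_n \cdot e = x_n \cdot e - y_n \cdot e \to -\infty$. Set $\hat u_n(t,x) := u(t+t_n, x+h_n)$. By the $\Z^N$-periodicity of $A$ and $f$, each $\hat u_n$ is an entire solution of~\eqref{eq:parabolic} with $0 \leq \hat u_n \leq \overline{p}$. Standard interior parabolic regularity estimates then provide uniform local $C^{1+\alpha/2,\,2+\alpha}_{t,x}$ bounds, so after a further extraction $\hat u_n \to \hat u_\infty$ locally uniformly, the limit $\hat u_\infty$ being an entire solution of~\eqref{eq:parabolic}.

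The central step is to identify $\hat u_\infty \equiv q_1$. For each fixed $(t,x)$, the $\Z^N$-periodicity of the profile $U$ in its first variable gives
$$
\hat u_n(t,x) = U\bigl(x+h_n,\,(x+h_n)\cdot e - c(t+t_n)\bigr) = U\bigl(x,\,x\cdot e + h_n\cdot e - c(t+t_n)\bigr).
$$
Since $h_n\cdot e \to -\infty$ and $c t_n$ stays bounded (trivially so when $c=0$), the second argument tends to $-\infty$. The defining asymptotics $U(\cdot,-\infty)\equiv q_1(\cdot)$ from Definition~\ref{def:puls} thus yield $\hat u_n(t,x) \to q_1(x)$ pointwise, which by uniqueness of locally uniform limits forces $\hat u_\infty \equiv q_1$.

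Evaluating at $(0,y_n)$ gives $\hat u_n(0,y_n) = u(t_n,x_n)$, and by local uniform convergence together with $y_n \to y_\infty$ we get $\hat u_n(0,y_n) \to \hat u_\infty(0,y_\infty) = q_1(y_\infty)$. Meanwhile $q_1(x_n) = q_1(y_n) \to q_1(y_\infty)$ by periodicity and continuity. Hence $u(t_n,x_n)-q_1(x_n)\to 0$, contradicting the choice of the sequence. The case $x\cdot e \to +\infty$, with $q_1$ replaced by $q_2$, is strictly symmetric. The only mildly delicate point is the degenerate case $c=0$: there $U$ is a priori only meaningful on the diagonal $\{(x,x\cdot e)\}$, but every evaluation above is of the form $U\bigl(x+h_n,(x+h_n)\cdot e\bigr)$, i.e.~on a (translated) diagonal, so the argument goes through unchanged.
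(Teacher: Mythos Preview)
Your proof is correct. The core idea---decompose $x_n=h_n+y_n$ with $h_n\in\Z^N$, use periodicity of $U$ in its first variable to reduce to the pointwise asymptotic $U(x,\cdot)\to q_1(x)$, and use regularity for compactness---matches the paper's argument in the case $c=0$ essentially verbatim. For $c\neq0$, the paper takes a shorter route: since the change of variables $(t,x)\mapsto(x,x\cdot e-ct)$ is invertible, $U$ inherits uniform continuity from $u$, and then the convergence $U(x,z)\to q_1(x)$ is automatically uniform in $x$ by periodicity in the first variable, from which the conclusion is immediate. Your unified contradiction argument works in both cases and avoids the case split, at the modest cost of invoking parabolic compactness for an entire limit solution $\hat u_\infty$ whose PDE structure you never actually use (equicontinuity would have sufficed).
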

\begin{proof}
By parabolic estimates, we know that the entire in time solution $u$ of \eqref{eq:parabolic} is (at least) uniformly continuous. 
On the one hand, if $c \neq 0$, due to the invertible change of variables, $U_1$ inherits the same regularity. Due to the periodicity in the first variable, 
the convergence of $U(x, z)$ to $q_1 (x)$ (resp. $q_2 (x)$) as $z \to - \infty$ (resp. $z \to +\infty$) is uniform in $x\in\R^N$. 
The conclusion of the proposition then follows in this case. 

On the other hand, when $c= 0$, we instead proceed by contradiction and assume that there exists a sequence $x_n$ such that $x_n \cdot e \to -\infty$ yet
$$
	\liminf_{n \to +\infty}  \left( U(x_n,x_n\.e) - q_1 (x_n) \right) >0 .
$$
Let us write $x_n = \xi_n + h_n$ with $h_n \in \mathbb{Z}^N$ and $\xi_n \in [0,1)^N$. In particular, $h_n \cdot e \to -\infty$ and 
$\xi_n \to \bar\xi \in [0,1]^N$, up to extraction of a subsequence. 
Since $U (x, x\cdot e)$ is uniformly continuous by elliptic regularity, we infer
$$\liminf_{n \to +\infty}  \left( U(\bar\xi + h_n,(\bar\xi + h_n)\.e) - q_1 (\bar\xi + h_n) \right) >0 ,$$
whence, recalling that $U$ and $q_1$ are spatially periodic,
$$\liminf_{n \to +\infty}  \left( U(\bar\xi,(\bar\xi + h_n)\.e) - q_1 (\bar\xi) \right) >0.$$
This contradicts the definition of the pulsating travelling front and more precisely the (pointwise) asymptotic. 
%
The convergence as $x \cdot e \to + \infty$ can be dealt with similarly.\end{proof}

\subsection{Uniqueness of the propagating terrace}\label{sec:unique}

Our aim is to show that any pair of terraces 
coincide, up to temporal translations of their fronts.
We will prove this by starting from the lowest fronts of the terraces, and 
then by an iteration argument. 
%
%
For this reason, we consider more generally a terrace $\mathcal{T} = ((q_k)_{0 \leq k \leq K}, (U_k)_{1 \leq k \leq K})$
connecting $\overline{p}$ to some lower state $0\leq \ul p<\ol p$
in a given direction~$e$. 
According to Proposition~\ref{pro:terrace_allstable}, all the $q_k$ are linearly stable.
We let $\varphi_k$ denote the periodic principal eigenfunction 
of the linearised operator around the steady state $q_k$, i.e.~satisfying~\eqref{periodicpe} with $p=q_k$,
normalised by
$\max \varphi_k = 1$.

First, we perturb the profiles of this propagating terrace by increasing 
their limits 
as $z\to\pm\infty$. 
Thanks to the stability of the platforms, we will make use of Lemma~\ref{lem:perturb0} to get that the perturbed profiles are supersolutions in some neighbourhoods of their limit states. The perturbation is performed by 
using the principal eigenfunctions~$\varphi_k$, as well as a cut-off function~$\chi$
which is nondecreasing, smooth and  satisfy
\begin{equation}\label{eq:chi0}
\chi(z)=
\begin{cases}
0 & \mbox{ if } z \leq -1 , \\
1 & \mbox{ if } z \geq 1.
\end{cases}
\end{equation}
\begin{lem}\label{lem:perturb}
With the above notation, for $k\in\{1,\dots,K\}$ define
	\begin{eqnarray*}
	\ol U_k^\eta (x,z) := U_k(x,z)+3\eta (1-\chi(z))\vp_{k-1} (x)	+\eta\chi(z)\vp_k (x), \vspace{3pt}
	\end{eqnarray*}
	There exists $\eta_k >0$ 
	such that, for any 
    $\eta\in(0,\eta_k]$, the function
	$$\ol u_k^\eta (t,x)  :=\ol U_k^\eta(x,x\cdot e-c_kt)$$ 
	satisfies
	$$
	\partial_t \ol u_k^\eta \geq \dv (A(x) \nabla \ol u_k^\eta ) + f (x,\ol u_k^\eta) + \eta^2,
	$$
	whenever
	$$\ol u_k^\eta\geq q_{k-1}(x)-\eta_k,\quad\text{or}\quad
	\ol u_k^\eta\leq q_k(x)+\eta_k. $$
\end{lem}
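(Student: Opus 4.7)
The plan is to split the $z$-axis (with $z := x \cdot e - c_k t$) into the three bands $\{z\leq-1\}$, $\{-1\leq z\leq 1\}$, and $\{z\geq 1\}$. On the first and third bands the cut-off $\chi$ is locally constant, so $\ol U_k^\eta$ reduces to a clean eigenfunction perturbation of the exact front $U_k$, and the plan is to invoke Lemma~\ref{lem:perturb0}$(ii)$ directly around the corresponding stable steady state. On the middle band, where the derivatives $\chi'$ and $\chi''$ would create extra drift and second-order contributions, the plan is to argue that, for $\eta_k$ small enough, neither triggering condition on $\ol u_k^\eta$ can be satisfied, so the inequality simply does not have to be checked there.

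For the band $\{z\leq -1\}$, one has $\chi\equiv 0$ and hence $\ol u_k^\eta(t,x) = u_k(t,x) + 3\eta\,\vp_{k-1}(x)$, where $u_k(t,x) := U_k(x, x\cdot e - c_k t)$ is an exact solution of~\eqref{eq:parabolic}. Proposition~\ref{pro:terrace_allstable} ensures that $q_{k-1}$ is linearly stable, so Lemma~\ref{lem:perturb0}$(ii)$ applied with $p = q_{k-1}$, $\sigma = 0$, and amplitude $3\eta$ provides some $\delta>0$ (independent of $\eta$) such that
$$\partial_t \ol u_k^\eta - \dv(A(x)\nabla \ol u_k^\eta) - f(x, \ol u_k^\eta) \geq 3\delta\eta$$
wherever $|u_k - q_{k-1}| \leq \delta$. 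The monotonicity of $U_k$ in $z$ from Theorem~\ref{th:existence} gives $u_k \leq q_{k-1}$ pointwise, and the hypothesis $\ol u_k^\eta\geq q_{k-1}-\eta_k$ then yields $0\leq q_{k-1}-u_k\leq \eta_k+3\eta\leq \delta$ as soon as $\eta_k$ is small. Choosing $\eta_k$ additionally small enough that $3\delta\eta\geq \eta^2$ produces the target estimate. The band $\{z\geq 1\}$ is treated symmetrically, with $q_k$ in place of $q_{k-1}$ and amplitude $\eta$ in place of $3\eta$.

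The main, and most delicate, step is to show that the middle strip $\{-1\leq z\leq 1\}$ never meets the hypothesis region. For this the plan is to exploit the strict ordering in Definition~\ref{def:puls}: at $z_0=-1$ the function $x\mapsto U_k(x,-1) - q_{k-1}(x)$ is continuous, periodic, and strictly negative, hence attains a negative maximum $-\gamma$; the monotonicity of $U_k$ in $z$ from Theorem~\ref{th:existence} upgrades this to $U_k(x,z)\leq q_{k-1}(x)-\gamma$ for all $z\geq -1$, and analogously $U_k(x,z)\geq q_k(x)+\gamma'$ for all $z\leq 1$. Since the total perturbation in $\ol U_k^\eta$ is bounded by $4\eta$ in absolute value, this yields, on $\{-1\leq z\leq 1\}$, the two-sided enclosure
$$q_k+\gamma'-4\eta \;\leq\; \ol u_k^\eta \;\leq\; q_{k-1}-\gamma+4\eta,$$
which sits strictly inside $(q_k+\eta_k,\,q_{k-1}-\eta_k)$ as soon as $\eta$ and $\eta_k$ are small relative to $\gamma,\gamma'$. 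Thus neither triggering condition is ever active on the middle strip, and the problematic $\chi',\chi''$ contributions never need to be estimated. The key obstacle in the argument is precisely this uniform separation of the profile from its two limits at $z=\pm 1$: it is what permits bypassing the transition region entirely, and it rests crucially on the combination of $x$-periodicity of $U_k$ with the strict inequalities of Definition~\ref{def:puls}.
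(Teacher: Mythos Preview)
Your proposal is correct and follows essentially the same route as the paper: both show that the triggering hypotheses on $\ol u_k^\eta$ cannot occur in the transition strip $\{|z|\leq 1\}$, so that on the relevant set $\chi$ is locally constant and Lemma~\ref{lem:perturb0}$(ii)$ applies directly. The paper simply reverses the order of presentation --- it starts from the hypothesis $\ol u_k^\eta\geq q_{k-1}-\eta_k$ (resp.\ $\leq q_k+\eta_k$), deduces $U_k\geq q_{k-1}-4\eta_k$ (resp.\ $\leq q_k+4\eta_k$), and then uses the separation estimates (the analogues of your $\gamma,\gamma'$) to conclude $z<-1$ (resp.\ $z>1$).

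One technical point: you justify the uniform gaps $U_k(x,z)\leq q_{k-1}(x)-\gamma$ for $z\geq -1$ and $U_k(x,z)\geq q_k(x)+\gamma'$ for $z\leq 1$ via the monotonicity of $U_k$ in $z$ from Theorem~\ref{th:existence}. But Lemma~\ref{lem:perturb} is stated for (and applied in Proposition~\ref{pro:unique_lowest} to) an \emph{arbitrary} terrace $\mathcal{T}$, not necessarily the one produced by Theorem~\ref{th:existence}, so monotonicity is not a priori available. The paper avoids this by obtaining the same separation directly from the strict inequalities $q_k<U_k<q_{k-1}$ of Definition~\ref{def:puls}, the $x$-periodicity, and the limits $U_k(\cdot,\pm\infty)$ --- a compactness argument that does not require monotonicity. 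Your argument is easily repaired the same way (and the same estimates also rule out the ``wrong'' triggering condition on each outer band, which you handle implicitly but do not spell out).
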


\begin{proof}
%
Recall that the function $U_k(x,x\.e-c_kt)$ is continuous, converges to $q_{k-1}$ or~$q_k$ as $x\.e-c_kt\to-\infty$ or $+\infty$ respectively,
and moreover it satisfies $q_k<U<q_{k-1}$.
Let $\delta$ be the minimum of the two quantities $\delta$ provided by Lemma~\ref{lem:perturb0}
in the case $p=q_{k-1}$ and $p=q_k$ respectively. We can take $\eta_k\in(0,
\delta/3)$
small enough to ensure the following:
\begin{eqnarray}
	\label{U<}
\forall (t,x)\in\R\times\R^N \text{ \ such that \ } x\.e-c_kt\geq-1,\quad & U_k(x,x\.e-c_kt)<q_{k-1} (x)-4 \eta_k,
\\[5pt]
\label{U>}
\forall (t,x)\in\R\times\R^N \text{ \ such that \ } x\.e-c_kt\leq1,\quad & U_k(x,x\.e-c_kt)>q_k (x)+4 \eta_k.
\end{eqnarray}


For $0<\eta\leq \eta_k$, define $\ol U_k^\eta$ and $\ol u_k^\eta$ as in the statement of the lemma.
Consider $(t,x)$ such that 
$\ol u_k^\eta (t,x) \geq q_{k-1} (x) - \eta_k$.
It follows that 
$$U_k(x,x \cdot e - c_k t)\geq q_{k-1} (x) - \eta_k  -3\eta \geq q_{k-1} (x) - 4\eta_k.$$
We deduce from one hand by~\eqref{U<} that 
$x \cdot e - c_k t < -1$, whence $\ol u_k^\eta (t,x)=
U_k(x,x \cdot e - c_k t)+3\eta\vp_{k-1} (x)$. 
From the other hand, recalling that $3 \eta \leq 3\eta_k \leq \delta$,
we can apply Lemma~\ref{lem:perturb0} and conclude that
$$
\partial_t \ol u_k^\eta \geq \dv (A(x) \nabla \ol u_k^\eta ) + f (x,\ol u_k^\eta) + 9\eta^2.
$$
Similarly, when
$\ol u_k^\eta (t,x) \leq q_k (x) +  \eta_k$ one has 
$U_k(x,x \cdot e - c_k t)\leq q_k (x) +  \eta_k$, hence~\eqref{U>} yields $x \cdot e - c_k t>1$
and eventually, by Lemma~\ref{lem:perturb0},
$$
\partial_t \ol u_k^\eta \geq \dv (A(x) \nabla \ol u_k^\eta ) + f (x,\ol u_k^\eta) + \eta^2.
$$
The lemma is proved.
\end{proof}

We are now in a position to prove the uniqueness result for the lowest front of the terrace. In the sequel, we will apply it to terraces connecting $\ol p$ to some lower stable state $\ul p$,
hence we state it in such a general form (with the straightforward adaptation of the definition of the terrace).
\begin{prop}\label{pro:unique_lowest}
	Under Assumption~\ref{ass:multi}, let $\underline{p}$ be a linearly stable, periodic steady state satisfying
	$0\leq \ul p<\ol p$. Let
	$\mathcal{T} = ((q_k)_{0 \leq k \leq K}, (U_k)_{1 \leq k \leq K})$
	and $\mathcal{T'}= ((q'_k)_{0 \leq k \leq K'}, (U'_k)_{1 \leq k \leq K'})$
	be two terraces connecting $\overline{p}$ to $\ul p$ in a direction $e \in S^{N-1}$,
	and let $(c_k)_{1 \leq k \leq K}$ and $(c_k')_{1 \leq k \leq K'}$ be the corresponding speeds. 
	Suppose that  at least one among $c_K$ and $c_{K'}'$ is non-zero.
	Then the lowest profiles $U_K(x,z)$ and $U'_{K'}(x,z)$ 
coincide, up to a shift in the $z$ variable. 
	In particular, $c_K = c_{K'} '$ and $q_{K-1} = q'_{K' - 1}$.
\end{prop}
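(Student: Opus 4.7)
The plan is to prove the proposition in two main stages: first establish that $c_K = c'_{K'}$, and then apply a sliding argument in the moving frame to conclude that the profiles agree up to a $z$-shift. Throughout I would assume without loss of generality that $c_K \leq c'_{K'}$, so by hypothesis $c'_{K'} \neq 0$; using the symmetry $u \mapsto \overline{p} - u$ together with the reflection $e \mapsto -e$ (which transforms terraces from $\overline{p}$ to $\underline{p}$ into terraces from $\overline{p} - \underline{p}$ to $0$ with reversed speeds, preserving Assumption~\ref{ass:multi}), one further reduces to $c'_{K'} > 0$.

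For the first stage, the idea is to build global sub- and supersolutions of~\eqref{eq:parabolic} from the entire terraces $\mathcal{T}$ and $\mathcal{T}'$, using Lemma~\ref{lem:perturb} to perturb each individual front into a local super- or subsolution near its stable-state asymptotes. These local pieces would be concatenated with large shifts, exploiting the ordering of the speeds $c_1 \leq \cdots \leq c_K$ (and analogously for $\mathcal{T}'$) so that in the moving frames, each piece is separated from the next by a long stretch of near-constant state where the perturbations do not conflict. The resulting global subsolution from $\mathcal{T}$ has its leading (lower) edge propagating at speed $c_K$, while the global supersolution from $\mathcal{T}'$ has its leading edge at $c'_{K'}$. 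Arguing by contradiction with $c_K < c'_{K'}$, a carefully translated comparison between the solution $u(t,x) = U_K(x, x\cdot e - c_K t)$ and these barriers would force an incompatibility in the location of the level sets close to $\underline{p}$, yielding $c_K = c'_{K'}$.

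For the second stage, with $c := c_K = c'_{K'} \neq 0$, the change of variables $(t,x) \mapsto (x,z) := (x, x\cdot e - ct)$ is bijective, and both $U_K(x,z)$ and $U'_{K'}(x,z)$ satisfy the same degenerate elliptic PDE on $\R^N \times \R$, periodic in $x$. I would then set
$$s^* := \inf\bigl\{\, s \in \R \,:\, U_K(x,z) \leq U'_{K'}(x, z-s) + \eta\,\varphi_K(x) \ \text{for all } (x,z)\,\bigr\},$$
where $\varphi_K$ is the principal eigenfunction at $\underline{p}$ and $\eta > 0$ is small. Finiteness of $s^*$ comes from the asymptotic convergence of both profiles to $\underline{p}$ as $z \to +\infty$ (Proposition~\ref{prop:cv}) combined with Lemma~\ref{lem:perturb0}, the perturbation providing the strict ordering needed in the asymptotic region near $\underline{p}$. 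At $s = s^*$ a contact point exists; the elliptic strong maximum principle applied to the linear equation satisfied by $U'_{K'}(\cdot, \cdot - s^*) - U_K(\cdot, \cdot)$ (after sending $\eta \to 0$) would force equality everywhere, hence $q_{K-1} \equiv q'_{K'-1}$ and the profiles coincide up to the shift $s^*$.

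The main obstacle is the first stage. Contrary to the classical bistable setting, where both fronts share the same upper state $\overline{p}$ and can be directly compared, here the two upper platforms $q_{K-1}$ and $q'_{K'-1}$ may differ and, under Assumption~\ref{ass:multi}, need not even be ordered. Consequently, one cannot compare the two lowest fronts in isolation; the global sub/supersolutions must be built from the full terraces and arranged to span the entire range between $\overline{p}$ and $\underline{p}$, which is where the requirement that \emph{all} platforms are linearly stable (so that Lemma~\ref{lem:perturb} applies at each of them) and the ordering of the speeds come into play. The non-zero-speed hypothesis is equally essential: it underpins the invertibility of the moving-frame change of variables used both in Lemma~\ref{lem:perturb} and in the second stage, and it is known (see~\cite[Section~6]{MR4130256}) that uniqueness genuinely fails for terraces containing zero-speed fronts.
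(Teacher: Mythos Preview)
Your overall strategy---building a generalised supersolution from the whole terrace $\mathcal{T}$ via Lemma~\ref{lem:perturb} and comparing it with the bottom front of $\mathcal{T}'$---is exactly the paper's. But the two-stage decomposition you propose does not close.

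The decisive gap is in Stage~2. Once $c_K = c'_{K'}$, you slide $U'_{K'}(\cdot,\cdot-s)$ against $U_K$ directly in the profile variables. For the sliding set $\{s : U_K \leq U'_{K'}(\cdot,\cdot-s)+\eta\varphi_K\}$ to be nonempty you need, letting $z\to-\infty$, that $q_{K-1} \leq q'_{K'-1}+\eta\varphi_K$. Nothing guarantees this: as you yourself stress in your final paragraph, $q_{K-1}$ and $q'_{K'-1}$ need not be ordered, so the set can be empty and the argument collapses. The perturbation $\eta\varphi_K$ only controls the common lower asymptote $\underline{p}$; it does nothing at $z\to-\infty$, where the two profiles see different states. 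Thus Stage~2 contradicts the very obstacle you have correctly identified.

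The paper avoids this by \emph{not} separating the speed identification from the profile identification. It keeps the full-terrace supersolution $\overline{u}^\eta$ (glued from all the perturbed fronts of $\mathcal{T}$) in play throughout, and slides it in time against the single solution $u(t,x)=U'_{K'}(x,x\cdot e - c'_{K'}t)$. Either a contact occurs at some finite time, or one first deduces $c_K=c'_{K'}\neq0$ and then slides in a time shift $\tau$; in both cases the touching happens on \emph{some} piece $\widetilde u_{k}^\eta$ of the glued supersolution, away from all platforms by a fixed amount $\bar\eta/2$. Only after sending $\eta\to 0$ does the strong maximum principle force $U_k \equiv U'_{K'}$ up to shift, and the asymptotics of pulsating fronts then impose $k=K$ and $q_{K-1}=q'_{K'-1}$. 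The point is that $\overline{u}^\eta$ reaches up to $\overline{p}+3\eta\varphi_0 > q'_{K'-1}$, so the initial comparison with $U'_{K'}$ is possible regardless of where $q'_{K'-1}$ sits.

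Two smaller issues. In Stage~1 you compare the barriers with $U_K$, but the useful comparison (and the one the paper makes) is with $U'_{K'}$: the supersolution built from $\mathcal{T}$ has its slowest leading edge at speed $c_K$, and if $c'_{K'}>c_K$ then $U'_{K'}$, a genuine solution, eventually overtakes it near $\underline p$. Also, the symmetry $u\mapsto\overline{p}-u$, $e\mapsto -e$ sends the \emph{lowest} front of a terrace to the \emph{uppermost} front of the transformed one (the speed ordering is reversed), so it does not reduce the lowest-front statement to the case $c'_{K'}>0$; the paper simply works with $c'_{K'}\neq0$ of either sign, choosing the time shift $\tau$ accordingly.
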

\begin{proof}
We can assume without loss of generality that $c_{K'} ' \geq c_K$.
We will employ a sliding method
between $U'_{K'}$ and the family 
$(\overline{U}_k^\eta)_{1 \leq k \leq K}$ given by Lemma~\ref{lem:perturb}
as perturbations of the profiles $(U_k)_{1 \leq k \leq K}$.

\Step{1}{``Gluing'' together the functions $\overline{u}_k^\eta$.}
%
Consider the quantities $(\eta_k)_{1 \leq k \leq K}$ provided by Lemma~\ref{lem:perturb} and call
$$\bar\eta:=\min\{\eta_1,\dots,\eta_K\}.$$
Recall that the platforms $(q_k)_{1 \leq k \leq K}$ of the terrace are strictly ordered by definition. 
Hence, up to decreasing $\bar\eta$ if need be, we may assume~that
\Fi{e<<}
\forall k\in\{1,\dots,K-1\},\quad
q_{k+1}+ 2\bar\eta <q_k.
\Ff
Then, for $1\leq k\leq K$ and $0<\eta<\bar\eta$, consider the functions 
$\ol u_k^\eta (t,x)=\ol U_k^\eta(x,x\cdot e-c_kt)$
provided by Lemma~\ref{lem:perturb}. These functions satisfy
$$\lim_{x\.e-c_kt\to-\infty}\big(\ol u_k^\eta (t,x)-q_{k-1}(x)-3\eta\varphi_{k-1}(x)\big)=0,\qquad
\lim_{x\.e-c_kt\to+\infty}\big(\ol u_k^\eta (t,x)-q_{k}(x)-\eta\varphi_{k}(x)\big)=0.$$
Then, for any $0<\eta<\bar\eta$, there exists $z_\eta\geq2$ such that,
for all $k\in\{1,\dots,K\}$, one has
\begin{eqnarray*}
	\forall (t,x)\in\R\times\R^N \text{ \ such that \ } x\.e-c_kt\leq-z_\eta+1,\quad & 
	\ol{u}_k^\eta(t,x)
>q_{k-1} (x) + 2 \eta \vp_{k-1} (x),\\[5pt]
	\forall (t,x)\in\R\times\R^N \text{ \ such that \ } x\.e-c_kt\geq z_\eta-1,\quad & 
	\ol{u}_k^\eta(t,x)
<q_k (x)+2\eta \vp_k (x).
\end{eqnarray*}
Let us pick $Z_\eta\in\Z^N$ such that $Z_\eta\. e\geq z_\eta$. Then we define
the following translation of $\ol u_k^\eta$:
\Fi{ukeps}
\t u_k^\eta(t,x):=\ol u_k^\eta(t,x-2(k-1)Z_\eta).
\Ff
The property of being a (strict) supersolution to~\eqref{eq:parabolic} 
in the neighbourhoods of $q_{k-1}$ and $q_k$,
granted by Lemma~\ref{lem:perturb}, holds true for the function $\t u_k^\eta$.
We glue together the $(\overline{u}_k^\eta)_{1 \leq k \leq K}$ by setting, for $t\geq0$,
\[
\ol u^\eta(t,x):=\begin{cases}
\t u_1^\eta(t,x) & \text{if }x\. e< c_1t+Z_\eta\.e ,\\[5pt]
\min\big\{\t u^\eta_k(t,x),\t u^\eta_{k+1}(t,x)\big\}
\ \quad&\text{if }\;c_kt+(2k-1)Z_\eta\.e\leq x\. e< c_{k+1}t+(2k+1)Z_\eta\.e\\
&\text{for some $k\in\{1,\dots,K-1\}$,}\\[5pt]
\t u^\eta_K(t,x) & \text{if } x\. e\geq c_K t+(2K-1)Z_\eta\.e .
\end{cases}\]
%
Observe that the ranges of $x\.e$ in the above definition cover the whole $\R$.
The resulting function $\overline{u}^\eta$ is illustrated in Figure~\ref{fig:supersol1}.

The function~$\ol u^\eta$ connects $\ol p+ 3\eta\vp_0$ to $\ul p+\eta\vp_K$ in
the direction $e$. 
Furthermore, on the one hand, for $t \geq 0$ and $x$ inside the strip $c_kt+(2k-1)Z_\eta\.e\leq x\. e\leq c_kt+(2k-1)Z_\eta\.e+1$,
one deduces from~\eqref{ukeps}, the fact that $Z_\eta\. e\geq z_\eta$ and the inequality
$c_{k+1}\geq c_k$, that
$$\t u^\eta_k(t,x)<q_k(x)+2\eta\vp_k (x)<
\t u^\eta_{k+1}(t,x),$$
hence $\ol u^\eta(t,x)=\t u^\eta_k(t,x)$ there.
On the other hand, for $t\geq0$ and $x$ inside the strip 
$c_{k+1}t+{(2k+1)Z_\eta\.e-1}\leq x\. e\leq c_{k+1}t+(2k+1)Z_\eta\.e $,
using also~\eqref{e<<} we find that 
$$\t u^\eta_{k+1}(t,x)<q_{k+1}(x)+2\eta\varphi_{k+1}(x)<q_k(x),$$
which is always larger than $\t u^\eta_k(t,x)$,
hence $\ol u^\eta(t,x)=\t u^\eta_{k+1}(t,x)$. 
This ensures that the function~$\ol u^\eta$ is continuous. 
\begin{figure}
\begin{center}
\includegraphics[width=0.7\textwidth]{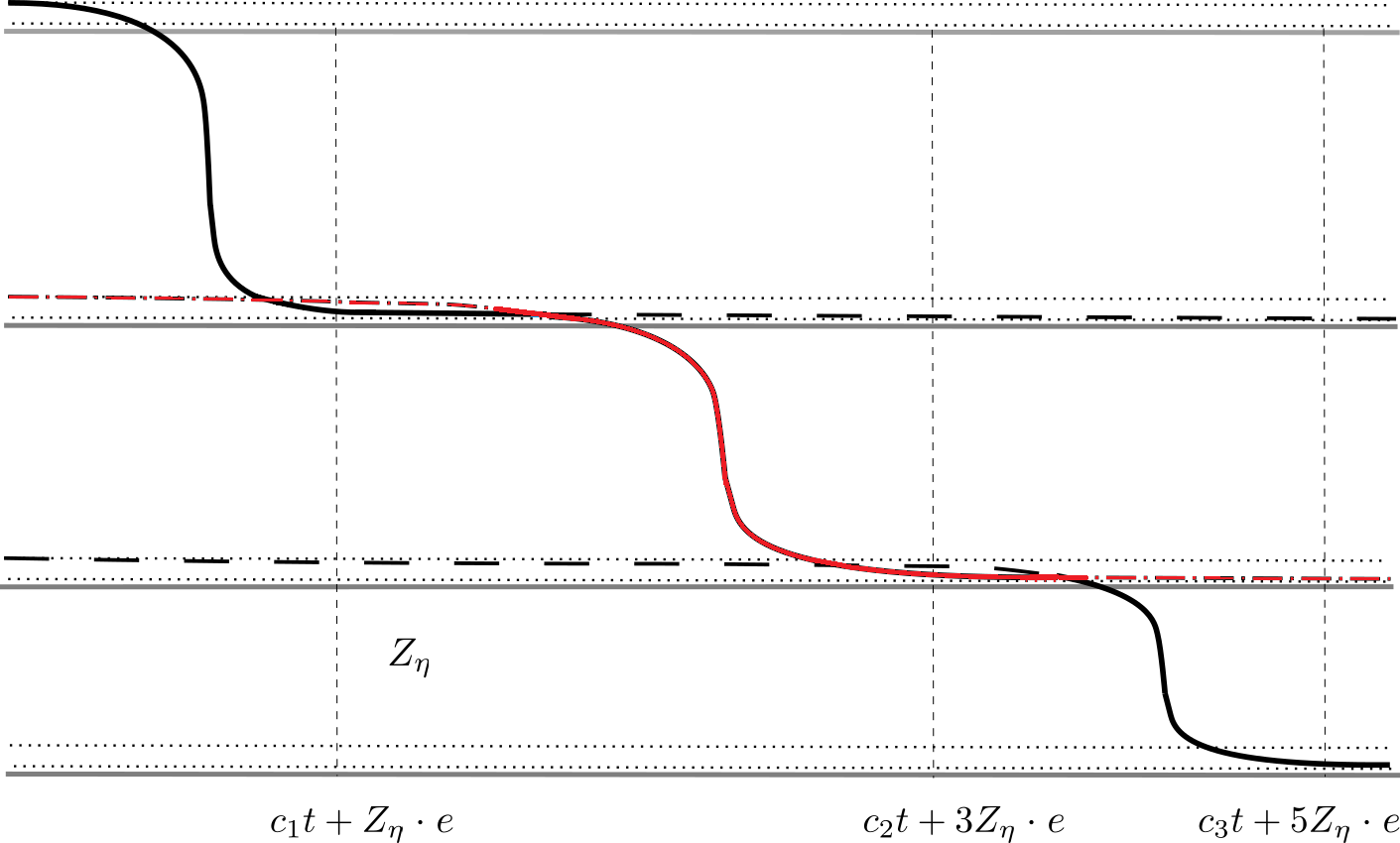}
\end{center}
\caption{Illustration of the construction of the supersolution~$\overline{u}^\eta$ at a fixed time~$t$. 
	The horizontal axis represents the variable $x \cdot e$. 
	The horizontal straight lines are the steady states (including three platforms). The solid line is~$\overline{u}^\eta$,
	 which is taken in each interval as a minimum between two perturbed travelling fronts (drawn in either dashed or dash-dotted lines).}
\label{fig:supersol1}
\end{figure}
It also entails that, 
for any $t\geq0$ and $x\in\R^N$, there exists $k\in\{1,\dots,K\}$ such that~$\ol u^\eta$ touches~$\t u^\eta_k$ from below at $(t,x)$, in the sense that
\Fi{touch}
0 =\big(\ol u^\eta-\t u^\eta_k\big)(t,x)=
\max_{|y-x|+|c_k(s-t)|<1}
\big(\ol u^\eta-\t u^\eta_k\big)(s,y).
\Ff
%
This property implies that $\ol u^\eta$ acts as a supersolution in the region
where the corresponding $\t u^\eta_k$ is, hence $\ol u^\eta$ will be a {\em generalised supersolution}
whenever sufficiently close to the steady states $q_k$.



\Step{2}{The time-sliding method.}
For $\eta<\bar\eta$, let $\ol u^\eta$ be the function constructed above.
Call 
\begin{equation}\label{def:uprop1}
u(t,x):=U'_{K'}(x,x\. e-c_{K'}' t).
\end{equation}
On the one hand, one has that $u(t,x)<q'_{K'-1}\leq \ol p$ and moreover  $u(t,x)\to\ul p$ as 
$x\.e\to+\infty$, locally uniformly in $t\in\R$.  We point out that $c'_{K'}$ may a priori be zero, and that we made use of Proposition~\ref{prop:cv} here.
On the other hand, the function $\ol u^\eta$ satisfies $\ol u^\eta\geq \ul p+\eta\min\vp_K$
and  $\ol u^\eta(t,x)\to \ol p+ 3\eta\vp_0$ as $x\.e\to-\infty$, locally uniformly with respect to $t\geq0$.
As a consequence, 
we can find $Z\in\Z^N$ such that $Z\.e$ is large enough to have
$$\forall t\in[0,1],\quad
\inf_{x\in\R^N}\big(\ol u^\eta(t,x)-u(t,x+Z)\big)>0.$$
Then, up to replacing $u(t,x)$ with $u(t,x+Z)$, which is still a pulsating travelling front of \eqref{eq:parabolic}, 
we can assume without loss of generality that 
$\inf_{x\in\R^N}(\ol u^\eta(t,x)-u(t,x))>0$ for all $t\in[0,1]$. 

We now distinguish two cases.

\smallskip 
\noindent{\em Case 1: $\ol u^\eta(t_0,x_0)\leq u(t_0,x_0)$ for some $t_0>0$ and $x_0 \in\R^N$.}\\
We can define the following real number:
$$t_\eta:=\inf\big\{t\geq0\ :\ 
\inf_{x\in\R^N}\big(\ol u^\eta-u\big)(t,x)=0\big\}.$$
We know that $t_\eta>1$. There holds that
$$
\forall t\in[0,t_\eta),\
\inf_{x\in\R^N}\big(\ol u^\eta-u\big)(t,x)>0,
$$
and there exists a sequence $(x_n)_{n\in\N}$ in $\R^N$
such that
$$\lim_{n\to+\infty}\big(\ol u^\eta-u\big)(t_\eta,x_n)=0.$$
Up to extraction of a subsequence, there exists $k\in\{1,\dots,K\}$
(depending on~$\eta$) 
such that~\eqref{touch} holds at the points $(t_\eta,x_n)$. We deduce that
\Fi{contact}
\lim_{n\to+\infty}\big(\t u_k^\eta-u\big)(t_\eta,x_n)=0,
\Ff
and
\Fi{above}
\forall s\in[0,t_\eta),\quad
\inf_{|y-x_n|+|c_k(s-t_\eta)|<1}\big(\t u_k^\eta-u\big)(s,y)>0.
\Ff

Let $(h_n)_{n\in\N}$ be a sequence of points in~$ \Z^N$ such that $(\xi_n:=x_n-h_n)_{n\in\N}$ is bounded.
By parabolic estimates, the functions $u(\cdot,h_n+\cdot)$, $\t u_k^\eta(\cdot,h_n+\cdot)$
converge locally uniformly as $n\to+\infty$ (up to subsequences) 
towards two functions $v$ and $v_k^\eta$, the former being a solution to~\eqref{eq:parabolic}
and the latter being a strict supersolution whenever $v_k^\eta>q_{k-1}- \bar\eta$
or $v_k^\eta<q_k+\bar\eta$, thanks to
Lemma~\ref{lem:perturb}.
Moreover, calling $\xi$ the limit of (a subsequence of)
$(\xi_n)_{n\in\N}$, we obtain from~\eqref{contact} and~\eqref{above} that
$$
\min_{|y-\xi|+|c_k(s-t_\eta)|<1}\big(v_k^\eta-v\big)(s,y)=
\big(v_k^\eta-v\big)(t_\eta,\xi)=0.$$
We necessarily have that
$$q_k(\xi)+ \bar\eta \leq v_k^\eta(t_\eta,\xi)\leq q_{k-1}(\xi)-\bar\eta ,$$
because otherwise we would get a contradiction with the parabolic comparison principle. 
We infer that, for $n$ large enough,
\Fi{limxn}
  q_k(x_n)+ \frac{\bar\eta}{2} \leq
\t u_k^\eta(t_\eta,x_n) \leq q_{k-1}(x_n)- \frac{\bar\eta}{2}.
\Ff
%

\smallskip
\noindent{\em Case 2: $\ol u^\eta (t,x)>u(t,x)$ for all $t\geq0$, $x\in\R^N$.}\\
We will reduce to a case similar to the previous one, but in the limit as $t\to+\infty$.
First of all, we restrict ourselves to $\eta<\min\{\bar\eta ,\hat \eta\}$, where $\hat\eta>0$ is such that
$$ q_K+2\hat \eta =\ul p+2 \hat \eta  < q'_{K'-1}.$$
We claim that then necessarily $c'_{K'}=c_{K}$. Suppose by contradiction that this is not the case.
Since by hypothesis $c'_{K'} \geq c_K$, this means that $c'_{K'} > c_K$.
But then, taking $c_K<c<c'_{K'}$, we infer from the definition \eqref{def:uprop1} of $u$ and
that of $\ol u^\eta$, as well as the choice of $\hat \eta$, that
$$\limsup_{t\to+\infty}(\ol u^\eta-u)(t,cte)\leq
\limsup_{t\to+\infty}(q_{K}+\eta\varphi_{K}-q'_{K'-1})(cte)\leq-\hat\eta <0,$$
which is excluded in the present case.
This shows that $c_{K}= c'_{K'}$ in this case, whence, by the hypothesis of the proposition, $c_{K}= c'_{K'}\neq0$.

We now translate $u$ in the $t$ variable. Namely, for $\tau\in\R$, we define
$$D(\tau):=\liminf_{t\to+\infty}\Big(\inf_{x\in\R^N}
(\ol u^\eta(t,x) - u(\tau+t,x)\Big).$$
Since $\ol u^\eta>u$ on $[0,+\infty)\times \R^N$, we have $D(0)\geq0$.
We claim that $D(\tau)\leq0$ for some $\tau\in\R$.
To see this, 
we observe that, for $t\geq0$ and $x_t$ such that 
$x_t\. e=c_K t+(2K-1)Z_\eta\.e$, it holds $\ol u^\eta(t,x_t)=\t u^\eta_K(t,x_t)$
and moreover
$(x_t-2(K-1)Z_\eta)\.e-c_K t=Z_\eta\.e\geq z_\eta$, whence,
by \eqref{ukeps}, 
$$\ol u^\eta(t,x_t)=\t u^\eta_K(t,x_t)<q_K(x_t)+2\eta\vp_K (x_t)<q'_{K'-1}(x_t),
$$
whereas, recalling~\eqref{def:uprop1}, 
$$u(\tau+t,x_t)=U'_{K'}(x_t, -c'_{K'}\tau+(2K-1)Z_\eta).$$
Therefore, if $c'_{K'} \tau >0$ is sufficiently large, one has
$\ol u^\eta(t,x_t) <u(\tau+t,x_t)$ for all $t\geq0$. For such values of $\tau$ we thereby
have $D(\tau)\leq0$.
Since the function $D$ is continuous (because $u$ is uniformly continuous in $t$) 
there exists then $\tau^*\in \mathbb{R}$ such that $D(\tau^*)=0$.

Let $(t_n)_{n\in\N}$ be a sequence diverging to $+\infty$, and $(x_n)_{n\in\N}$ be
a sequence in~$\R^N$ such that
$$\lim_{n\to \infty}\big(\ol u^\eta(t_n,x_n) - u(\tau^*+t_n,x_n)\big)=0.$$
We can find $k\in\{1,\dots,K\}$ (depending on $\eta$) such that~\eqref{touch}
holds at $(t,x)=(t_n,x_n)$ for all $n\in\N$, up to extraction of a subsequence, 
hence
\Fi{contact2}
\lim_{n\to+\infty}\big(\t u_k^\eta(t_n,x_n) - u(\tau^*+t_n,x_n)\big)=0,
\Ff
and in addition, using $D(\tau^*)=0$,
\Fi{above2}
\liminf_{n\to+\infty}\bigg(
\inf_{|y-x_n|+|c_{k}(s-t_n)|<1}\big(\t u_k^\eta(s,y) - u(\tau^*+s,y)\big)\bigg)\geq0.
\Ff

As in the previous case, we consider a sequence 
$(h_n)_{n\in\N}$ of points on~$\Z^N$ such that $(x_n-h_n)_{n\in\N}$ is bounded.
By parabolic estimates, the functions $u(\tau^*+t_n+\cdot,h_n+\cdot)$, $\t u_k^\eta (t_n+\cdot,h_n+\cdot)$
converge locally uniformly, as $n\to+\infty$ (up to subsequences),
towards two functions $v$ and $v_k^\eta$.
The former is still a solution to~\eqref{eq:parabolic}, whereas the latter, owing to Lemma~\ref{lem:perturb},
is a (strict) supersolution whenever $v_k^\eta>q_{k-1}-\bar\eta$
or $v_k^\eta<q_k+\bar\eta$. Moreover,~\eqref{contact2} and~\eqref{above2} imply that 
$v$ touches $v_k^\eta$ from below at~$(0,\xi)$, where $\xi=\lim_{n\to+\infty}(x_n-h_n)$.
The parabolic strong maximum principle then yields 
$$q_k(\xi)+\bar\eta \leq v_k^\eta(0,\xi)\leq q_{k-1}(\xi)-\bar\eta,$$ 
whence, for large enough $n$,
\Fi{limxn2}
q_k(x_n)+ \frac{\bar\eta}{2} \leq \t u_k^\eta(t_n,x_n)\leq q_{k-1}(x_n)- \frac{\bar\eta}{2}.
\Ff

\Step{3}{Conclusion.}
In any case, for $0<\eta<\min\{\bar\eta,\hat\eta\}$, by either~\eqref{contact}-\eqref{above}
or~\eqref{contact2}-\eqref{above2},
we can find $k_\eta \in\{1,\dots K\}$,
$t_\eta>1$, $\tau_\eta \in\R$ and $x_\eta \in\R^N$
such that the following hold:
\Fi{<eta}
\t u_{k_\eta}^\eta(t_\eta,x_\eta)-u(\tau_\eta+t_\eta,x_\eta)<\eta,
\Ff
\Fi{>-eta}
\forall s\in[0,t_\eta),\quad
\inf_{|y-x_\eta|+|c_{k_\eta}(s-t_\eta)|<1}\big(\t u_{k_\eta}^\eta(s,y)-u(\tau_\eta+s,y)\big)>-\eta,
\Ff
and in addition, by~\eqref{limxn} or~\eqref{limxn2},
\Fi{q+d<u<q-d}
q_{k_\eta}(x_\eta)+\frac{\bar\eta}2\leq
\t u_{k_\eta}^\eta(t_\eta,x_\eta)\leq 
q_{{k_\eta}-1}(x_\eta)-\frac{\bar\eta}2.
\Ff

We can find a sequence $\eta \to0^+$ along which all these properties are fulfilled with the same
integer $k_\eta=k$. 
Observe that the associated sequence 
$$x_\eta\.e- 2 (k-1) Z_\eta\.e-c_k t_\eta $$ remains bounded
as $\eta\to 0^+$, owing to~\eqref{ukeps} and~\eqref{q+d<u<q-d}.

Let $(h_\eta)_{\eta}$ in~$\Z^N$ be such that $(\xi_\eta:=x_\eta-h_\eta)_{\eta}$ is bounded, hence
$(h_\eta\.e- 2 (k-1) Z_\eta\.e-c_kt_\eta)_\eta$ is bounded too. Call $\xi$ and $\zeta$ their respective limits as $\eta\to0^+$
(up to subsequences).
We have that, as (a sequence of) $\eta\to0^+$, 
$$\t u_{k}^\eta(t_\eta+t,h_\eta+x)\to U_k(x,x\cdot e-c_kt+\zeta ),\qquad
u(\tau_\eta+t_\eta+t,h_\eta+x)\to U'_{K'}(x,x\cdot e-c'_{K'}t+\zeta'),
$$
locally uniformly in $(t,x)$, with 
$$\zeta':=\lim_{\eta\to0^+}\big(h_\eta\.e-c'_{K'}(\tau_\eta+t_\eta)\big)\in\R\cup\{\pm\infty\}.$$
For clarity we point out that, when $\zeta' = \pm \infty$, then $U'_{K'} (x,x.e - c'_{K'} t + \zeta')$ 
should be understood as one of the periodic states $q'_{K'}$ (i.e. $0$) or $q'_{K'-1}$.

Passing to the limit $\eta\to0$ in \eqref{<eta}-\eqref{>-eta} yields
$$U_k(\xi,\xi\cdot e+\zeta)-U'_{K'}(\xi,\xi\cdot e+\zeta')\leq0,$$
$$\forall  s \in (-1,0),\quad
\inf_{|y -\xi |+|c_{k}s|<1}\big(U_k(y,y\cdot e-c_{k}s+\zeta)-U'_{K'}(y,y\cdot e-c'_{K'}s+\zeta')\big)\geq0.$$
Since
$U_k(x,x\cdot e-c_{k}t+\zeta)$, $U'_{K'}(x,x\cdot e-c'_{K'}t+\zeta')$ are solutions of~\eqref{eq:parabolic}, 
the strong maximum principle and the unique continuation property (see, e.g.,
Section~4 in~\cite{UCparabolic}) imply that 
$$\forall x\in\R^N,\ t\in\R,\quad
U_k(x,x\cdot e-c_{k}t+\zeta)=U'_{K'}(x,x\cdot e-c'_{K'}t+\zeta').
$$
From this, and the properties of \ptf s, one readily deduces that $c_k=c_{K'}'$, 
that~$U'_{K'}$ is necessarily the ``bottom'' of the terrace $\mc{T}$ too,
i.e.~$k = K$, and that $U_K$ and $U'_{K'}$ coincide up to translation (on the
set $(x,x.e)$, $x\in\R^N$, in the case $c_{K'}'=0$).
\end{proof}

The uniqueness of the terrace now follows by recursively applying  Proposition~\ref{pro:unique_lowest}.

\begin{proof}[Proof of Theorem~\ref{th:uniqueness}]
 Let $\mc{T}$ and $\mc{T}'$ be two terraces connecting $\overline{p}$ to $0$ in 
 a direction $e \in \Sph$, with the speeds
of the former being all non-zero. Proposition~\ref{pro:unique_lowest}
  implies that the lowest profiles of~$\mc{T}$ and~$\mc{T}'$ coincide, up to a translation. 
 Dropping this lowest profile from both $\mc{T}$ and $\mc{T}'$, 
we obtain two terraces connecting $\overline{p}$ to the same state $\ul p>0$,
which by Proposition~\ref{pro:terrace_allstable} is thereby stable.
We can then apply Proposition~\ref{pro:unique_lowest} to these new terraces and find that
they also share the same lowest profile (up to shift). Iterating we 
eventually infer that $\mc{T}$ and~$\mc{T}'$ coincide up to shift of their profiles. 
\end{proof}

\begin{rmk}\label{rmk:maybe}
Notice that the argument in this section still partly applies when the terrace~$\mathcal{T}$ has some fronts with zero speeds. Indeed, assume that the lowest (hence fastest) fronts of $\mathcal{T}$ have positive speeds. Then the previous induction still works in the same way, until one reaches a front of~$\mathcal{T}$ with zero speed. Putting this with a symmetrical argument for the uppermost fronts, one would find that any other terrace must include all the fronts of $\mathcal{T}$ whose speeds are not zero. However, terraces may still differ in and between the platforms of~$\mathcal{T}$ connected by travelling fronts with zero speed (see~\cite[Section~6]{MR4130256} for related comments in the one dimensional case). 
\end{rmk}

\subsection{A procedure for the determination of the terrace}\label{sec:algorithm}

Theorem~\ref{th:uniqueness} guarantees the uniqueness of the terrace, in any given direction.
We now present a procedure for determining such a terrace, and in particular its platforms, i.e.~the 
states that are ``selected'' by the terrace. This is based on an iterative argument involving the 
speeds of the fronts connecting the steady states. We also refer to~\cite[Section~6]{MR4130256} for some examples in the one dimensional case.

We exhibit the procedure in the case where the linearly stable, periodic steady states provided by Assumption~\ref{ass:multi}
are ordered, i.e. they are given by
$$p_0  > p_1 > \cdots > p_M,$$
	for some $M \geq1$.
For any $k\in\{1,\dots,M\}$, the equation is bistable between $p_k$ and $p_{k-1}$, hence 
the propagating terrace provided by Theorem~\ref{th:existence} in any given direction~$e\in\Sph$
reduces to a pulsating travelling front connecting 
$p_{k-1}$ to~$p_k$;
let $U_k$ be its profile and~$c_k$ be its speed.
We point out that the latter is unique by Theorem~\ref{th:uniqueness}
(even when $c_k=0$).
One of the following situations occurs:
\begin{enumerate}[$(i)$]
\item 
the family $(c_k)_{1 \leq k  \leq K}$ is monotonically non-decreasing, i.e.
$$c_{k-1}\leq c_k \quad\text{ for all }\;k\in\{2,\dots,M\};$$
\item
there exists $J\in\{2,\dots,M\}$ such that
$$c_{J-1}>c_J.$$ 
\end{enumerate}
In the case $(i)$, the terrace $\mc{T}$ in the direction $e$ is composed by the whole family of 
front profiles $(U_k)_{k=1,\dots,M}$ (and the corresponding platforms).
In the case $(ii)$, it follows from Theorem~\ref{th:existence} that 
the terrace connecting $p_{J-2}$ to $p_J$ is composed by a single 
pulsating travelling front, with a profile~$\t U_J$ (and it follows from our Theorem~\ref{th:planar_speeds}
that $\t c_J\in[c_J,c_{J-1}]$).
One is therefore left with the family of stacked front profiles
$$U_1,\dots,U_{J-2},\t U_J,U_{J+1},\dots, U_M,$$ 
and thus may repeat the previous argument.
Since the number of fronts is reduced by one, after at most $M$ iterations one ends up in the case $(i)$,
that is, one has constructed the terrace $\mc{T}$.

One should notice that the order in which one picks the pair of fronts to be ``merged together''
does not 
matter, due to the fact that the speed of the resulting front is between the speeds of the two merged fronts.
The general principle is that lowest fronts can only speed down a given front, while highest
fronts can only speed it up.

\subsection{Spreading speeds for planar-like initial data}\label{sec:speed-planar}

In this section, we prove Theorem~\ref{th:planar_speeds}, i.e.~that the 
propagating terrace determines the spreading speeds for planar-like initial data. We will rely on a similar construction as in the proof of Lemma~\ref{lem:perturb},
but this time we will need the function to be a supersolution everywhere.

We place ourselves in the hypotheses of Theorem~\ref{th:planar_speeds}, that is, under Assumption~\ref{ass:multi} and the 
existence of a propagating terrace $\mathcal{T} = ((q_k)_{0 \leq k \leq K}, (U_k)_{1 \leq k \leq K})$
in the direction~$e\in\Sph$ whose speeds are non-zero.
Then Theorem~\ref{th:uniqueness}
guarantees that $\mathcal{T}$ is unique (up to shifts), and in particular
it coincides with the terrace given by Theorem~\ref{th:existence}. The condition~\eqref{ck<>0}
of the speeds being non-zero allows one to strengthen the conclusion of Lemma~\ref{lem:perturb}
by getting a function which is everywhere a supersolution, at the price of slightly increasing its speed.
For its construction, we again make use of a smooth and nondecreasing function $\chi$ satisfying~\eqref{eq:chi0},
as well as of the periodic principal eigenfunctions $\varphi_k$ of the problems~\eqref{periodicpe} with $p=q_k$,
that we normalise by ${\max\vp_k=1}$.

\begin{lem}\label{lem:supersol_planarspeeds}
For $k \in \{1,\dots , K\}$ and $\eps>0$,	call
$$\overline{\psi}_k (t,x) := (1-\chi (x \cdot e - (c_k+ \eps) t)) \varphi_{k-1} (x) + \chi (x \cdot e- (c_k+ \eps) t) \varphi_k (x).$$
Then there exists $\delta_k >0$ such that, 
for all $\eta\in[0,\delta_k]$, the function
\[
\overline{u}_k (t,x) :=  U_k (x, x \cdot e - (c_k+ \eps)t  ) + 
\eta \overline{\psi}_k (t,x)\]
is a supersolution of~\eqref{eq:parabolic}.
\end{lem}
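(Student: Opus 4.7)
The approach is to exploit two distinct sources of positivity: the speed bump from $c_k$ to $c_k+\eps$, which yields a strict supersolution wherever $U_k$ is strictly decreasing in its second variable, and the linear stability of the platforms $q_{k-1}$ and $q_k$, which makes the eigenfunction perturbation contribute positively in the tails. To quantify the first observation, set $v(t,x):=U_k(x,\,x\cdot e-(c_k+\eps)t)$ and $z:=x\cdot e-(c_k+\eps)t$. Since $U_k(x,\,x\cdot e - c_k\tau)$ is a solution of~\eqref{eq:parabolic}, a direct computation (equivalently, the change of time $\tau=((c_k+\eps)/c_k)\,t$) yields
\[
\partial_t v - \dv(A(x)\nabla v) - f(x,v) \,=\, -\eps\,\partial_z U_k(x, z).
\]
By Theorem~\ref{th:existence} this right-hand side is nonnegative. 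Moreover, $c_k\neq 0$ (implicit in the hypotheses of Theorem~\ref{th:planar_speeds}) ensures that $U_k$ is smooth and that $\partial_z U_k$ satisfies a linear parabolic equation; the strong maximum principle and the asymptotic flatness at $z\to \pm\infty$ then force $\partial_z U_k<0$ everywhere. Combining with periodicity in $x$ and compactness, for each $Z>0$ one obtains a constant $\mu(Z)>0$ such that $-\partial_z U_k(x,z)\geq \mu(Z)$ for all $x\in\R^N$ and $|z|\leq Z$.

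Next I would compute $\mathcal{L}[\overline{u}_k]:=\partial_t \overline{u}_k -\dv(A\nabla \overline{u}_k) -f(x,\overline{u}_k)$ by expanding in $\eta$. Using the decomposition $\overline{\psi}_k=(1-\chi)\varphi_{k-1}+\chi\varphi_k$, the eigenvalue identities $\dv(A\nabla \varphi_j)+\partial_u f(x,q_j)\varphi_j=\lambda^{q_j}\varphi_j$ for $j\in\{k-1,k\}$, and a first-order Taylor expansion of $f(x,\cdot)$ around $v$, one arrives at
\[
\mathcal{L}[\overline{u}_k] \,=\, -\eps\,\partial_z U_k \,+\, \eta\,\mathcal{R}(t,x) \,+\, O(\eta^2),
\]
where
\[
\mathcal{R} \,=\, -(1-\chi)\varphi_{k-1}\bigl[\lambda^{q_{k-1}} + \partial_u f(x,v) - \partial_u f(x,q_{k-1})\bigr] -\chi\,\varphi_k\bigl[\lambda^{q_k} + \partial_u f(x,v) - \partial_u f(x,q_k)\bigr] + \mathcal{G},
\]
the residual $\mathcal{G}$ being a uniformly bounded quantity supported in the strip $|z|\leq 1$ on which $\chi'$ and $\chi''$ are non-zero.

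Finally I would partition $[0,+\infty)\times \R^N$ into three zones according to $z$, and pick $Z\geq 1$ large enough that, by the uniform convergence of the profile provided by Proposition~\ref{prop:cv} (available here since $c_k+\eps\neq 0$) together with the continuity of $\partial_u f$, the first bracket in $\mathcal{R}$ is bounded above by $\lambda^{q_{k-1}}/2<0$ whenever $z\leq -Z$, and symmetrically the second bracket is bounded above by $\lambda^{q_k}/2<0$ whenever $z\geq Z$. On the tail zones $\chi'$ vanishes and $\mathcal{G}=0$, so $\eta\mathcal{R}$ has sign $-\lambda^{q_{k\pm 1}}\eta\,\varphi_{k\pm1}/2>0$, dominating the $O(\eta^2)$ correction as soon as $\delta_k$ is small enough, while the contribution $-\eps\,\partial_z U_k\geq 0$ only helps. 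On the compact middle zone $|z|\leq Z$, the uniform bound $-\eps\,\partial_z U_k\geq \eps\mu(Z)$ dominates $|\eta\mathcal{R}|+O(\eta^2)=O(\eta)$ once $\delta_k$ is chosen sufficiently small relative to $\eps\mu(Z)$.

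The main obstacle is precisely the middle zone, where linear stability is unavailable and the supersolution margin must come entirely from the strict monotonicity of $U_k$. This is where the hypothesis $c_k\neq 0$ enters essentially, as it underpins both the smoothness of the profile and the uniform strict inequality $-\partial_z U_k\geq \mu(Z)$ on compact intervals of $z$. The transition strip $|z|\leq 1$, where $\chi'\neq 0$ generates the bounded residual $\mathcal{G}$, is entirely contained in the middle zone once $Z\geq 1$, and is thus absorbed by the same $\eps\mu(Z)$ margin.
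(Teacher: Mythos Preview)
Your argument is correct and follows essentially the same route as the paper: split into a compact middle zone $|z|\leq Z$, where the strict monotonicity $-\partial_z U_k\geq\mu(Z)>0$ (coming from $c_k\neq 0$ and the strong maximum principle) absorbs all $O(\eta)$ terms, and two tail zones, where the linear stability of $q_{k-1}$ and $q_k$ makes the eigenfunction perturbation itself a positive contribution. The only cosmetic difference is that the paper packages the tail computation into a direct appeal to Lemma~\ref{lem:perturb0}, whereas you redo that computation inline via the eigenvalue identity; also, the paper bounds $f(x,\overline u_k)-f(x,U_k)$ by a Lipschitz estimate rather than a Taylor expansion. Two minor points: under Assumption~\ref{ass:multi} the function $f$ is only $C^1$, so your remainder is $o(\eta)$ uniformly rather than $O(\eta^2)$ (the argument is unaffected), and the uniform convergence of $U_k(x,z)$ as $z\to\pm\infty$ relies on $c_k\neq 0$, not on $c_k+\eps\neq 0$.
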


\begin{proof}
%
Consider the quantity $\delta>0$ given by Lemma~\ref{lem:perturb0} in the case $p=q_k$.
We can choose $Z \geq1$ large enough so that the following properties hold:
$$\forall x \in \R^N,
\ \forall z \leq - Z, \qquad | U_k (x,z) - q_{k-1} (x)| \leq  \delta,$$
$$\forall x \in \R^N, \ \forall z \geq Z, 
\qquad | U_k (x,z) - q_{k} (x)| \leq \delta .$$
Observe that $U_k (x , x \cdot e - (c_k + \eps) t  )$
is a supersolution of~\eqref{eq:parabolic} because $U_k (x, z)$ is nonincreasing with respect to $z$,
thanks to Theorem~\ref{th:existence}.
Consider the function $\overline{u}_k$ defined in the statement of the lemma.
On one hand,~since
\[\overline{u}_k (t,x)=
\begin{cases}
U_k (x , x \cdot e - (c_k+ \eps) t  )+ \eta \vp_{k-1}(x)
&
\text{if }\;x \cdot e - (c_k+ \eps) t < -Z ,\\
U_k (x , x \cdot e - (c_k+ \eps) t  )+ \eta \vp_{k}(x)
&
\text{if }\;x \cdot e - (c_k+ \eps) t >Z\,,
\end{cases}\]
Lemma~\ref{lem:perturb0} implies that $\overline{u}_k(t,x)$
is a supersolution of~\eqref{eq:parabolic} in the region $t\geq0$, $|x \cdot e - (c_k+ \eps) t |>Z$,
for any choice of $\eta\in[0,\delta]$. 
%
On the other hand, differentiating in time the equation~\eqref{eq:parabolic} satisfied by $u_k(t,x):=
U_k (x, x\cdot e- c_k t)$, one gets a linear equation for 
$\partial_t u_k(t,x)=-c_k\partial_z U_k$, which we know is a nonnegative or nonpositive function, according to the sign of the (non-zero) speed $c_k$.
It then follows from the parabolic strong maximum principle that $\partial_z U_k$ cannot vanish somewhere
without being identically equal to $0$, which is impossible since $U(\.,-\infty)>U(\.+\infty)$.
As a consequence, 
due to the periodicity of $U_k$ with respect to its first variable, we get
$$\rho := \min_{\substack{x \in \R^N\\ |z| \leq Z}} \left[ -\partial_z U_k (x,z) \right]  >0.$$
We compute
\[
 \partial_t \overline{u}_k - \dv (A(x) \nabla \overline{u}_k) - f (x, \overline{u}_k) \geq
  \eta\big(\partial_t \overline{\psi}_k  -  \dv (A (x) \nabla  \overline{\psi}_k)\big)  
- \eps \partial_z U_k   + f(x,U_k) - f (x,\overline{u}_k) ,
\]
where $U_k$, $\partial_z U_k$ are evaluated at $(x , x \cdot e - (c_k +\eps) t )$. 
The last two terms above are controlled by
$$|f(x,U_k) - f (x,\overline{u}_k)|\leq
\eta\|\partial_uf\|_{L^\infty(\R^N\times[-\eta,\|\overline{p}\|_\infty+\eta])}.$$ 
We then have, for $t \geq 0$ and $|x \cdot e- (c_k +\eps)t| \leq Z$,
\[
 \partial_t \overline{u}_k - \dv (A(x) \nabla \overline{u}_k) - f (x, \overline{u}_k) \geq
\eps\rho-  M_k\eta-\eta\|\partial_uf\|_{L^\infty(\R^N\times[-\eta,\|\overline{p}\|_\infty+\eta])},
\]
with $M_k$ only depending on $A$ and the function $\overline{\psi}_k$, which in turn depends on
$\chi,\vp_{k-1},\vp_k$.
As a consequence, for $\eta$ sufficiently small, the function $\overline{u}_k$ is a supersolution
of~\eqref{eq:parabolic}.
%
\end{proof}

\begin{rmk} 
	Several comments are in order. 
	\begin{enumerate}
		\item
	We point out that analogous versions of Lemmas~\ref{lem:perturb},~\ref{lem:supersol_planarspeeds}
	hold true for subsolutions, just obtained as $\overline{p}(x) - \ol u_k^\eta(t,x)$ and $\overline{p}(x) - \ol u_k(t,x)$
	respectively (which propagate in the direction $-e$ with speed $-c_k$ and $-(c_k+\eps)$ respectively).
	
	\item For the function  $\ol u_k$ of Lemma~\ref{lem:supersol_planarspeeds} to be a supersolution, the strict
	inequality $\partial_z U_k<0$ is necessary, which requires $c_k\neq0$. In the case $c_k = 0$
	it is even unclear whether $z\mapsto U(x,z)$ is continuous, as we discussed in \cite[Remark 3]{GR_2020}.
	Actually, using $\partial_z U_k<0$, one could have constructed a finer supersolution, namely,
	a function approaching exponentially $U_k (x, x \cdot e - (c_k+ \eps)t  )$ as $t\to+\infty$.
	
	\item Using $\ol u_k$ with $k=1$, one can derive the sharp upper bound for the uppermost spreading speed,
	but not for lower speeds.
	To get those, we will ``glue'' together the supersolutions $\ol u_k$. 
This will require that the speeds at which they move
are strictly ordered with respect to $k$. For this purpose, we will perturb the speeds $(c_k)_k$ by 
an increasing family $(\eps_k)_k$.
In the case where the $(c_k)_k$ are already strictly ordered, one may replace the perturbation terms $\eps_k t$ by some exponentially decaying term $C (1 - e^{-\gamma t})$ with some well chosen constants $C,\gamma$, as in the classical ``sandwich argument'' of Fife and Mc Leod~\cite{FMcL}. 
Combined with the above considerations 1 and 2, this would lead to 
a slightly more accurate result under this additional assumption (see also part~$2$ of Remark~\ref{rmk:theo_planar}). 
	
	\end{enumerate}
\end{rmk}

We can now prove the convergence of solutions with ``planar-like'' initial data to the terrace, far from the 
regions $x\.e=c_kt$ where the interfaces are located.

\begin{proof}[Proof of Theorem~\ref{th:planar_speeds}]
	We only derive the upper estimate for the spreading speed. The lower estimate follows by considering the initial datum $\overline{p} - u_0$ (which is still planar-like but in the opposite direction~$-e$) and the corresponding solution of~\eqref{eq:parabolic} with $f(x,u)$ replaced by $f(x, \overline{p}(x) - u)$ (which still satisfies Assumption~\ref{ass:multi}). 
%

Consider an increasing finite sequence of strictly positive numbers $(\eps_k)_{1\leq k\leq K}$.
We have
$$\forall k\in\{1,\dots,K-1\}, \qquad c_k + \eps_k < c_{k+1} + \eps_{k+1}.$$
Next, set $\delta':=\min\{\delta_1,\dots,\delta_K\}$,
where the $\delta_k$ are given by Lemma~\ref{lem:supersol_planarspeeds}.
Call also
$$\delta'':=\min_{1\leq k\leq K-1}\Big(\min(q_k-q_{k+1})\Big),$$
which is positive because the platforms of a terrace are strictly ordered.
Then consider an increasing finite sequence $(\eta_k)_{1\leq k\leq K}$ in $(0,\min\{\delta',\delta''\})$.
Finally, for $k\in\{1,\dots,K\}$,
consider the function $\ol u_k$ given by Lemma~\ref{lem:supersol_planarspeeds}, with $\eta=\eta_k$.
It holds that
$$\overline{u}_k  (t,x) - q_{k} (x) - \eta_k \varphi_k (x) \to 0
\quad\text{ as }\;
x \cdot e - (c_k+ \eps_k) t \to +\infty,$$
while
$$\overline{u}_{k+1} (t,x) - q_{k} (x) - \eta_{k+1} \varphi_k (x) \to 0\quad\text{ as }\;
x \cdot e - (c_{k+1}  +\eps_{k+1}) t \to -\infty.$$ 

Take $\t c_1,\dots,\t c_{K-1} \in\R$ such~that 
$$\forall k\in\{1,\dots,K-1\}, \qquad c_k + \eps_k <\t c_k< c_{k+1} + \eps_{k+1}.$$
Since $\eta_{k+1}>\eta_k$, from the above limits 
we infer the existence of
some large enough $t_k>0$ such that
$$\forall t\geq t_k\ \;\text{and}\; \ x \cdot e = \t c_k t,\qquad
\overline{u}_k (t,x)<\overline{u}_{k+1} (t,x),$$
and in addition, using that $\eta_{k+1}\varphi_{k+1}\leq \eta_{k+1}<\min(q_k-q_{k+1})$,  such that
$$\forall t\geq t_k\ \;\text{and}\; \ x \cdot e \geq \t c_{k+1} t,\qquad
\overline{u}_k (t,x)>\overline{u}_{k+1} (t,x).$$
%
%
%
It follows that the function $\ol u$ defined by
\[
\ol u(t,x):=\begin{cases}
\ol u_1(t,x) & \text{if }x\. e< \t c_1t ,\\[5pt]
\min\big\{\ol u_k(t,x),\ol u_{k+1}(t,x)\big\}
\ \quad&\text{if }\;\t c_kt\leq x\. e< \t c_{k+1}t  \\
&\text{for some $k\in\{1,\dots,K-1\}$,}\\[5pt]
\ol u_K(t,x) & \text{if } x\. e\geq \t c_K t ,
\end{cases}\]
is continuous for $t\geq T:=\max\{t_1,\dots,t_K\}$.
%
The shape of $\ol u$ is similar to the one of $\ol u^\eta$ in the proof
of Proposition~\ref{pro:unique_lowest}, see Figure~\ref{fig:supersol1} for an illustration.


By Lemma~\ref{lem:supersol_planarspeeds}, the function $\overline{u}$ is a {\em generalised supersolution} of \eqref{eq:parabolic} for 
 $t>T$ (being essentially the minimum of supersolutions). 
We also note~that
 $$\forall t\geq t_k\ \;\text{and}\; \ x \cdot e \geq \t c_{k} t,\qquad
\overline{u} (t,x)\leq\overline{u}_k (t,x).$$ 
We deduce that, for all $k\in\{1,\dots,K\}$,
\begin{equation}\label{eq:ing_supersol2}
\limsup_{ t \to +\infty} \Big(\sup_{x \cdot e \geq \t c_k t}\big(\overline{u} (t,x) - q_k (x)\big)\Big)\leq
\limsup_{ t \to +\infty} \Big(\sup_{x \cdot e \geq \t c_k t}\big(\overline{u}_k (t,x) - q_k (x)\big)\Big)
 \leq\eta_k.
\end{equation}
One further sees that
$$\liminf_{x \cdot e \to -\infty} \big(\overline{u} (T,x) -  \overline{p} ( x)\big) >0,$$
$$\inf_{x \in \mathbb{R}^N}  \overline{u} (T,x) >0 .$$
On the other hand, according to the assumptions of the theorem, 
the initial datum $u_0$ satisfies that $0 \leq u_0 \leq \overline{p}$, and 
furthermore
$u_0(x)<\eta$ for $x \cdot e$
sufficiently large, with~$\eta$ belonging to the basin of attraction of $0$.
One deduces that the solution~$u$ of \eqref{eq:parabolic} satisfies, 
on the one hand, $u(t,x)\leq\overline{p} (x)$ for all $t \geq 0$, $x \in \mathbb{R}^N$, by comparison,
and on the other hand, by passing to the limit as $x \cdot e \to + \infty$ and using parabolic estimates,~that
$$\lim_{t \to +\infty} \Big(\, \limsup_{x \cdot e \to + \infty} u(t,x) \Big) = 0.$$
Therefore, we can find $T' > 0$ and $j\in\Z^N$ with $j\.e$ large enough in
such a way that
$$\forall x\in\R^N,\quad
u  (T', x) \leq \overline{u} (T,x-j).$$
Applying a comparison principle we find that 
$u (t ,x) \leq \overline{u} (t + T - T ' ,x-j)$ 
for $t \geq T' $ and $x \in \R^N$. In particular, \eqref{eq:ing_supersol2} yields
$$ \forall k\in\{1,\dots,K\},  \quad 
\limsup_{ t \to +\infty} \Big(\sup_{(x-j) \cdot e \geq \t c_k ( t + T -T' )}\big( u (t ,x) - q_k (x)\big)\Big)\leq\eta_k .$$
Recall that $\t c_k$ was arbitrarily taken in $(c_k + \eps_k,c_{k+1} + \eps_{k+1})$,
and that $(\eps_k)_{1\leq k\leq K}$ was an arbitrary increasing sequence of positive numbers
and $(\eta_k)_{1\leq k\leq K}$ was an arbitrary increasing sequence in~$(0,\min\{\delta',\delta''\})$. This shows the second limit in the statement of the theorem.
%
\end{proof}

\section{The compactly supported case under Assumption~\ref{ass:same_shape}}\label{sec:cpct1}


This section is devoted to the proof of Theorem~\ref{th:spread_cpct1}, but also includes several general results that will be used again later. 
For a given direction $e\in\Sph$, we will consider a propagating terrace $\mc{T}^e$
connecting $\overline{p}$ to~$0$ in the direction~$e$, and, calling~$c_1 (e)$
the speed of its uppermost travelling front, we will assume  that
\Fi{c1>0}
c_1(e) >0\quad\text{ for all }\;e \in S^{N-1}.
\Ff
Hence, all speeds of $\mc{T}^e$ are positive and thus by Theorem~\ref{th:uniqueness},
$\mc{T}^e$ is the unique terrace (up to shifts)
in the direction $e$.
From now on, when we refer to condition~\eqref{c1>0}, it will always be understood that~$c_1(e)$ is associated with the (a fortiori unique) terrace~$\mc{T}^e$.

We~start with a preliminary result about the lower semi-continuity of the mapping $e\mapsto c_1(e)$.
The analogue of this result in the bistable case is given in~\cite[Proposition~2.5]{RossiFG}, see also~\cite{Guo_speeds}.

\begin{lem}\label{lem:lsc}
	Under Assumption~\ref{ass:multi} and~\eqref{c1>0}, the mapping $e\mapsto c_1(e)$ is  bounded and lower semi-continuous
	on~$\Sph$, hence in particular
	$$\inf_{e \in S^{N-1}} c_1 (e) >0 .$$
\end{lem}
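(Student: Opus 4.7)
The proof has three parts. The positivity of the infimum follows immediately once LSC is established: an LSC function on the compact sphere $S^{N-1}$ attains its infimum, which under the hypothesis \eqref{c1>0} must be strictly positive. For boundedness, I would use a standard comparison: for each $e \in S^{N-1}$, construct an exponentially decaying KPP-type supersolution moving at a fixed speed $C$ depending only on $A$, $\|\partial_u f\|_\infty$, and $\overline{p}$ (not on $e$); this supersolution dominates $U_1^e(x, x\cdot e - c_1(e) t)$ from above and forces $c_1(e) \leq C$.

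\textbf{LSC via limit extraction.} For LSC, take $e_n \to e$ and, using boundedness, a subsequence along which $c_n := c_1(e_n) \to c_* \in [0,+\infty)$; the goal is $c_* \geq c_1(e)$. By Proposition~\ref{pro:finitep}, only finitely many stable periodic steady states lie below $\overline{p}$, so along a further subsequence $q_1^{e_n} \equiv q$ is fixed. Let $u_n(t,x) := U_1^{e_n}(x, x\cdot e_n - c_n t)$. Fix $x_0 \in \R^N$ and $\mu \in (q(x_0), \overline{p}(x_0))$, and shift each $u_n$ in time so that $u_n(0, x_0) = \mu$ (possible because $c_n \neq 0$ makes $t \mapsto u_n(t, x_0)$ surjective onto $(q(x_0), \overline{p}(x_0))$). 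Parabolic regularity yields a locally uniform limit $u_n \to u_\infty$ (up to subsequence), with $u_\infty$ an entire solution satisfying $q \leq u_\infty \leq \overline{p}$ and $u_\infty(0, x_0) = \mu$. If $c_* > 0$, passing to the limit in the identity $u_n(t, x+h) = u_n(t - h\cdot e_n/c_n, x)$ for $h \in \Z^N$ gives $u_\infty(t,x) = V(x, x\cdot e - c_* t)$ for some profile $V$, periodic in $x$ and monotone in $z$; uniform exponential tail estimates on $U_n$ near the stable states $\overline{p}$ and $q$ (with rates governed by the linear stability spectral gaps, intrinsic to $\overline{p}$ and $q$) pass to the limit and show $V(\cdot, -\infty) = \overline{p}$ and $V(\cdot, +\infty) = p_+$ for some stable state $p_+ \geq q$. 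If $c_* = 0$, the limit is a standing wave in direction $e$, interpreted via Proposition~\ref{prop:cv}, with analogous asymptotics.

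\textbf{Contradiction via planar spreading.} Suppose by contradiction $c_* < c_1(e)$. I construct an initial datum $u_0$ that is planar-like from $\overline{p}$ to $0$ in direction $e$ and satisfies $u_0 \leq u_\infty(0, \cdot)$: take $u_0(x) = u_\infty(0, x)$ for $x\cdot e \leq 0$ and smoothly interpolate down to $0$ as $x\cdot e \to +\infty$. The maximum principle then yields $u(t,x) \leq u_\infty(t,x)$ for $t \geq 0$, where $u$ is the solution with initial datum $u_0$. Now fix $c'$ with $c_* < c' < c_1(e)$ and consider points $x_t$ satisfying $x_t\cdot e = c't$. Theorem~\ref{th:planar_speeds} (lower estimate with $k=1$) applied to $u$ gives $u(t, x_t) \to \overline{p}(x_t)$ uniformly in $x_t$, while $u_\infty(t, x_t) = V(x_t, (c'-c_*)t) \to p_+(x_t)$ uniformly as $t \to +\infty$ (by $\Z^N$-periodicity of $V$ in its first variable and exponential decay of $V(\cdot, z) - p_+(\cdot)$ as $z \to +\infty$, with rate controlled by the linear stability of $p_+$). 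Combining, one obtains $\overline{p}(x_t) \leq p_+(x_t) + o_t(1)$, contradicting the strict ordering $\min(\overline{p} - p_+) > 0$. Hence $c_* \geq c_1(e)$, completing the proof. The main technical obstacle is the degenerate case $c_* = 0$, where Definition~\ref{def:puls} is no longer invertible and one must invoke Proposition~\ref{prop:cv} together with exponential decay estimates near $\overline{p}$ and $p_+$ to secure the uniform tail-control used above.
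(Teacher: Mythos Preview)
Your overall strategy---extract a locally uniform limit of the shifted fronts $u_n$ and invoke Theorem~\ref{th:planar_speeds} to force $c_*\geq c_1(e)$---is the same as the paper's, and so is the boundedness argument. The gap is in how you normalise the $u_n$. Your single-point condition $u_n(0,x_0)=\mu$ does not by itself ensure $u_\infty(0,x)\to\overline{p}(x)$ as $x\cdot e\to-\infty$, which is precisely what Theorem~\ref{th:planar_speeds} requires of the comparison datum. You bridge this by asserting ``uniform exponential tail estimates'' with rates ``intrinsic to $\overline{p}$ and $q$'', but the spatial decay in $z$ of a pulsating front near a stable state is governed by the linearised \emph{travelling-wave} problem, whose exponent depends on both the direction $e_n$ and the speed $c_n$; uniformity in $n$ (in particular if $c_n$ approaches $0$) is neither obvious nor established anywhere in the paper, and the prefactor tied to a pointwise normalisation would also need control. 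The same unproved uniformity underlies your claim $u_\infty(t,x_t)\to p_+(x_t)$, and the whole detour through reconstructing a profile $V$, with its separate $c_*=0$ case, rests on it.

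The paper avoids all of this with a different normalisation: for each $n$ it picks $\zeta^{e_n}$ so that $\min_x\big(U^{e_n}(x,\zeta^{e_n})-\overline{p}(x)\big)=-\eta/2$. Monotonicity of $z\mapsto U^{e_n}(x,z)$ then gives, for free and uniformly in~$n$, the half-space bound $u_n(0,x)\geq\overline{p}(x)-\eta/2$ whenever $x\cdot e_n<0$, which passes straight to $u_\infty$ and feeds Theorem~\ref{th:planar_speeds}. The same normalisation, read in the moving frame with speed $c$, shows $u_\infty$ stays $\eta/2$ away from $\overline{p}$ there, closing the argument without ever identifying $u_\infty$ as a front or splitting on the value of~$c_*$.
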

\begin{proof}
	Let us start with the boundedness of $c_1$. 
This is a standard fact, that we show here for the sake of completeness.
First, due to the $C^1$ regularity and spatial periodicity of $f$, there exists $C>0$ such that 
	$$\forall x \in \mathbb{R^N},\ u \in [0, \max \overline{p}],\quad
	f(x,u) \leq C u.$$
	Then, a direct computation shows that there exists $\overline{c}$ large enough,
	depending on $A$ and $C$, 
	such that the function
	$$ \overline{u} (t,x) := \min \left\{ \overline{p} (x) , e^{- (x \cdot e - \overline{c} t )} \right\} $$
	is a supersolution of \eqref{eq:parabolic}, for any  $e\in\Sph$.
Applying our Theorem~\ref{th:planar_speeds} on the planar case to the initial datum~$\overline{u} (0,x)$, we infer by comparison that $c_1 (e) \leq \overline{c}$. The speed $c_1$ being also positive by~\eqref{c1>0}, the claimed boundedness follows.
	
	Now fix $e\in\Sph$ and consider any sequence $(e_n)_{n\in\N}$ in $\Sph$
	converging to $e$. From the boundedness, up to extraction of a subsequence, $(c_1 (e_n))_{n\in\N}$ admits a limit $c\in [0,+\infty)$. We need to show that $c\geq c_1(e)$.

Let $U^e$ be the uppermost profile of the terrace $\mc{T}^e$.
Then, take $\eta>0$ such that 
$\ol p-\eta$ is in the basin of attraction of $\ol p$, cf.~Lemma~\ref{lem:perturb0},
and moreover $U^e(\.,+\infty)<\ol p-\eta$. We can then find $\zeta^e\in\R$ such that
\Fi{U-p}
\min_{x\in\R^N}(U^e(x,\zeta^e)-\ol p(x))=-\frac\eta2.
\Ff
We then consider the following translations of the \ptf s:
$$u_n(t,x):=U^{e_n}(x,x\.e_n-c_1(e_n)t+\zeta^{e_n}).$$
By parabolic estimates, the $u_n$ converge locally uniformly as $n\to+\infty$
 (up to subsequences) towards a solution $u(t,x)$ of~\eqref{eq:parabolic}. 
%
%
%
Moreover, on the one hand,
for $x\in\R^N$ such that $x\. e<0$, we have that $x\. e_n<0$ for $n$ large enough, whence by~\eqref{U-p},
$$u(0,x)=\lim_{n\to+\infty} U^{e_n}(x,x\.e_n+\zeta^{e_n})
\geq \overline{p}(x)-\frac\eta2.$$
It follows from the parabolic comparison principle that
$$u(t,x) \geq \underline{u} (t,x),$$
for any $t \geq 0$ and $x \in \mathbb{R}^N$, where $\underline{u}$ is the solution of~\eqref{eq:parabolic} together with
$$\underline{u} (0,x) := \left\{
\begin{array}{ll}
\overline{p} (x) - \frac{\eta}{2} & \mbox{if } x \cdot e < 0, \vspace{3pt} \\
0 & \mbox{if } x \cdot e \geq 0 .
\end{array}
\right.
$$
Applying Theorem~\ref{th:planar_speeds} to~$\underline{u}$, we deduce that
$$\lim_{ t \to +\infty} \sup_{x \cdot e \leq (c_1 (e) - \varepsilon) t} | u (t,x) - \overline{p} (x) | = 0,$$
for any $\varepsilon >0$.

On the other hand, for $t\geq0$ and $x\in[0,1]^N$, it holds by~\eqref{U-p} that
\[\begin{split}
	&\min_{x\in[0,1]^N}\Big(u(t,x+cte+\sqrt{N}e)-\ol p(x+cte+\sqrt{N}e)\Big)\\
&=\lim_{n\to+\infty}\min_{x\in[0,1]^N} \Big(U^{e_n}(x+c_1(e_n)t e_n+\sqrt{N}e_n,x\.e_n+\sqrt{N}+\zeta^{e_n})
-\ol p(x+cte+\sqrt{N}e)\Big)\\
&\leq \lim_{n\to+\infty}\min_{x\in[0,1]^N} \Big(U^{e_n}(x+c_1(e_n)t e_n+\sqrt{N}e_n,\zeta^{e_n})-\ol p(x+c_1(e_n)t e_n+\sqrt{N}e_n)\Big)\\
&=-\frac\eta2.
\end{split}\]
From these two facts, we eventually infer that $c\geq c_1(e)$.
\end{proof}

In the following two subsections, we derive an upper and a lower bound on the spreading speed of the level sets,
respectively. 
Whether these bounds coincide in general remains an open problem. We will see that this is the case under Assumption~\ref{ass:same_shape}, i.e.\ when all propagating terraces in all directions share the same platforms. 

\subsection{An upper bound for the spreading speeds}\label{sec:upper}

An estimate from above of the spreading speeds of solutions with compactly supported initial data
would immediately follow from the comparison with solutions starting from planar-like initial data, whose spreading speeds 
have been derived in Section~\ref{sec:speed-planar}. 
However, we need an upper estimate which is
uniform with respect to the direction, and this requires an additional argument.

More precisely, the next result asserts that the spreading shape is contained in a set, that one may recognize 
as the upper estimate stated in both Theorems~\ref{th:spread_cpct1} and~\ref{th:spread_cpct11}.

\begin{prop}\label{spread_upper}
	Under Assumption~\ref{ass:multi} and~\eqref{c1>0}, let~$0\leq p<\ol p$ be a linearly stable, spatially periodic, steady state of~\eqref{eq:parabolic}, and let $W_{c[p]}$ be given by Definitions~\ref{def:W} and~\ref{def:Wp}.
	Then, for any solution $u$ emerging from a compactly supported initial datum $0 \leq u_0 \leq \overline{p}$,
	it holds:
	$$\forall \eps >0, \qquad 
	\limsup_{t \to +\infty}
	\bigg( \sup_{x  \in\, \R^N\setminus (1+\varepsilon)t  W_{c[p]}  } \big(u(t,x) -  p(x)\big)\bigg) \leq 0.$$
\end{prop}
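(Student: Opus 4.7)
The plan is to dominate $u$ from above by a planar-like supersolution in each direction $e\in\Sph$, apply Theorem~\ref{th:planar_speeds} in every such direction, and then merge the resulting directional bounds into a uniform bound on $\R^N\setminus(1+\eps)tW_{c[p]}$ through a compactness argument on $\Sph$.

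Concretely, for each $e\in\Sph$ I would take $v_e$ to be the solution of~\eqref{eq:parabolic} emerging from the step datum $v_e(0,x)=\ol p(x)\mathbf{1}_{\{x\cdot e\le R\}}$, with $R>0$ chosen so large that some lattice translate of $v_e(0,\cdot)$ dominates $u_0$. The hypothesis $c_1(e)>0$, combined with the nondecreasing ordering $c_1^e\le\cdots\le c_{K^e}^e$ and Theorem~\ref{th:uniqueness}, guarantees that $\mc{T}^e$ is the unique terrace in direction~$e$ and that all its speeds are positive. Hence Theorem~\ref{th:planar_speeds} applies to $v_e$: letting $k=k(e)$ be the index for which $q^e_k\le p$ while $q^e_{k-1}\not\le p$ (so that $c^e_k=c[p](e)$), one obtains, for every $\eps'>0$,
\[\limsup_{t\to+\infty}\,\sup_{x\cdot e\,\ge\,(c[p](e)+\eps')t}\big(v_e(t,x)-p(x)\big)\le 0.\]
By the parabolic comparison principle the same bound transfers to $u$ on the same half-space, the fixed lattice translation being absorbed into~$\eps'$ for large~$t$.

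To assemble these directional bounds into the uniform statement, I would first note that $c[p](e)\ge c_1(e)\ge c_*:=\inf_{\Sph}c_1>0$, using Lemma~\ref{lem:lsc} and the ordering of speeds in~$\mc{T}^e$. So whenever $x\notin(1+\eps)tW_{c[p]}$, some direction~$e$ satisfies $x\cdot e>(1+\eps)tc[p](e)\ge (c[p](e)+\eps c_*)t$. A standard exponential tail estimate $u(t,x)\le\eta$ for $|x|\ge\tilde c\,t$ (obtained by comparison with a supersolution of the form $Ae^{-\mu(|x|-\tilde c\,t)}$) restricts attention to $|x|\le\tilde c\,t$. Assuming the lower semi-continuity of $e\mapsto c[p](e)$, analogous to Lemma~\ref{lem:lsc}, one extracts a finite cover $V_{e_1},\dots,V_{e_m}$ of $\Sph$ on which both $c[p](e)\ge c[p](e_i)-\eps c_*/4$ and $|e-e_i|\le\eps c_*/(4\tilde c)$; a direct computation shows that for $e\in V_{e_i}$, any $x$ with $|x|\le\tilde c\,t$ and $x\cdot e>(1+\eps)tc[p](e)$ also satisfies $x\cdot e_i\ge (c[p](e_i)+\eps c_*/4)t$, reducing the uniform bound to the finitely many planar bounds at the~$e_i$.

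The main obstacle I anticipate lies in the lower semi-continuity of~$c[p]$, which under Assumption~\ref{ass:multi} alone is non-trivial because the platforms of $\mc{T}^e$ may jump with $e$, so the front selected by the condition $U(\cdot,+\infty)\le p$ can vary discontinuously. To adapt Lemma~\ref{lem:lsc}, I would take a minimising sequence $e_n\to e$ with $c[p](e_n)\to\liminf c[p]$, renormalise the associated profiles so as to be pinned at a fixed crossing height near~$p$, pass to a limit via parabolic estimates, and check that the limit is still a pulsating front of the terrace in direction~$e$ whose upper limit exceeds~$p$ at some point; the a priori bound $c[p]\ge c_*>0$ prevents such a limit from degenerating into a standing profile and makes the passage to the limit possible.
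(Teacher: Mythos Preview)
Your directional argument --- compare with planar-like data and invoke Theorem~\ref{th:planar_speeds} to get, for each fixed~$e$,
\[
\limsup_{t\to+\infty}\,\sup_{x\cdot e\,\ge\,(1+\eps)t\,c[p](e)}\big(u(t,x)-p(x)\big)\le 0,
\]
is exactly what the paper does. The difference lies in how uniformity over directions is obtained.

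You aim to prove the lower semi-continuity of $e\mapsto c[p](e)$ and then run a finite-cover argument on~$\Sph$. The paper instead works \emph{geometrically on the Wulff shape itself}: since $c[p](e)\ge c_1(e)$ and $\inf_{\Sph}c_1>0$ by Lemma~\ref{lem:lsc}, the set $W_{c[p]}$ is compact, convex, with non-empty interior, and the function $\ol w_p$ in~\eqref{wpe} is positive and continuous. Hence $(1+2\eps)\partial W_{c[p]}$ is a compact set disjoint from $(1+\eps)W_{c[p]}$, so the open half-spaces $\{x\cdot e>(1+\eps)c[p](e)\}$, $e\in\Sph$, cover it; extract a finite subcover at directions $e_1,\dots,e_J$ and conclude
\[
\bigcap_{j=1}^J\{x\cdot e_j\le(1+\eps)c[p](e_j)\}\subset(1+2\eps)W_{c[p]}.
\]
This reduces the uniform bound to the $J$ directional ones, without ever invoking any regularity of $c[p]$ beyond the positive lower bound.

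Your route is not wrong in spirit, but the lower semi-continuity of $c[p]$ is a genuine obstacle that you have not resolved. The sketch you give --- pin the profiles, pass to a limit, identify the limit as a front of $\mc{T}^e$ crossing~$p$ --- runs into the very difficulty you flag: the limiting entire solution need not be one of the fronts of the unique terrace~$\mc{T}^e$; its endpoints might be stable states that are not platforms of~$\mc{T}^e$, or the ``crossing'' condition relative to~$p$ might be lost in the limit (e.g.~the upper endpoint could collapse onto~$p$). Since Theorem~\ref{th:uniqueness} only gives uniqueness of the whole terrace, not of individual fronts between arbitrary stable states, recovering $c[p](e)\le\liminf c[p](e_n)$ from the limit object is delicate. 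The paper's argument sidesteps all of this: only $\inf_{\Sph} c[p]>0$ is needed, and that follows trivially from $c[p]\ge c_1$ and Lemma~\ref{lem:lsc}.
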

\begin{proof}
	First recall Definitions~\ref{def:W} and~\ref{def:Wp}. By~\eqref{c1>0} and Theorem~\ref{th:uniqueness}, for any $e \in \Sph$, there exists a unique (up to shifts) propagating terrace~$\mathcal{T}^e$ connecting~$\ol p$ to~$0$ in the direction~$e$. Then, for~$0\leq p<\ol p$ a linearly stable, spatially periodic, steady state of~\eqref{eq:parabolic}, there exists a unique profile~$U$ contained in~$\mathcal{T}^e$ such that
		$$U (\cdot, +\infty) \leq p (\cdot),\qquad
		\max_{x\in\R^N}(U (x, -\infty)-p(x)\big)>0.
		$$
		The speed of that front is denoted by $c[p] (e)$ and the associated Wulff shape is
		$$W_{c [p]}:= \bigcap_{e \in S^{N-1}}\big\{ x \in \R^N \ | \ x \cdot e \leq  c[p] (e) \big\}.$$ 
		By construction, $c [p] (e) \geq c_1 (e)$. Then condition~\eqref{c1>0} together with Lemma~\ref{lem:lsc} yield $\inf_{e} c [p] (e) >0$. It follows that $W_{c[p]}$ is a non-empty, convex and compact set, which can equivalently be written as
		\Fi{wpe}
		W_{c[p]} = \{ r e \ | \ e\in S^{N-1} \text{\; and \;}  0 \leq r \leq \ol w_p (e) \}
		\quad\text{ with }\
		\ol w_p (e) := \inf_{\substack{ e' \in S^{N-1}\\ e' \cdot e >0}} \frac{c[p] (e')}{e \cdot e'},
		\Ff
		and also that the function $e\mapsto \ol w_p(e)$ is positive and continuous on $\Sph$, c.f.~\cite[Proposition 2.4]{RossiFG}.
		
	Next, fix some arbitrary $\varepsilon >0$. Since $u_0$ is compactly supported, for any $e \in S^{N-1}$ one can 
	 find another initial datum $\t{u}_0^e$ satisfying the assumptions of Theorem~\ref{th:planar_speeds} and such that $\t{u}_0^e \geq u_0$ in~$\mathbb{R}^N$. The parabolic comparison principle implies that $u$ is smaller than or equal to the solution emerging from $\t{u}_0^e$ and therefore we deduce from the upper estimates in
	Theorem~\ref{th:planar_speeds} that, for any $e \in S^{N-1}$,
\begin{equation}\label{upper_notuniform}
\limsup_{ t \to +\infty}\bigg( \sup_{ x \cdot e \geq (1+\varepsilon) t c[p] (e)} 
	\big(u(t,x)-  p (x)\big)\bigg) \leq 0.
\end{equation}
In order to derive from this the desired estimate,
one would need to show that \eqref{upper_notuniform} holds true uniformly with respect to $e\in\Sph$.
We do so by showing that one can reduce to a finite number of directions $e$, up to replacing $\eps$ with $2\eps$.
Namely, consider~the~set
$$(1+2\eps)W_{c[p]} .$$
Its boundary $(1+2\eps)\partial W_{c[p]} $ is a compact set which does not intersect the compact set $(1+\eps)W_{c[p]} $,
as it is seen from the expression~\eqref{wpe}, recalling from above that 
the function $\ol w_p$ is positive and continuous.
This means, by the definition of $W_{c[p]}$, that the family
$$\big\{ x \in \R^N \ | \ x \cdot e > (1+\eps) c[p] (e)  \big\},\quad e\in\Sph,$$
is a cover of $(1+2\eps)\partial W_{c[p]} $. We extract from it a finite subcover, that is,
$$(1+2\eps)\partial W_{c[p]} \subset
\bigcup_{j=1}^J
\big\{ x \in \R^N \ | \ x \cdot e_j > (1+\eps) c[p] (e_j) \big\}.$$
Therefore, since the set 
$$\bigcap_{j=1}^J
\big\{ x \in \R^N \ | \ x \cdot e_j \leq (1+\eps) c[p] (e_j) \big\}$$
is connected, and intersects $(1+2\eps) W_{c[p]}  $ (at least at the origin) but not $(1+ 2 \eps) \partial W_{c[p]} $, we deduce
$$\bigcap_{j=1}^J
\big\{ x \in \R^N \ | \ x \cdot e_j \leq (1+\eps)c [p] (e_j) \big\}
\subset(1+2\eps)W_{c[p]} .$$
Passing to the complementary on the above inclusion, and using~\eqref{upper_notuniform}
on the (finite number of) directions $e_1,\dots,e_J$, one gets
$$\limsup_{t \to +\infty}
\bigg( \sup_{x  \in\, \R^N\setminus (1+2\varepsilon)t  W_{c[p]}   } \big(u(t,x) -  p(x)\big)\bigg) \leq 0,$$
which is the desired estimate with $\eps$ replaced by $2\eps$.
\end{proof}

\subsection{A lower bound for the speed towards $\overline{p}$}

Let us turn to the lower bound. We first derive the estimate for the speed of spreading 
towards the uppermost steady state $\overline{p}$, 
then we will apply it iteratively to deal with lower states.
\begin{prop}\label{spread_lower_first}
	Under Assumption~\ref{ass:multi} and~\eqref{c1>0},
let $u$ be a solution emerging from an initial datum $0 \leq u_0 \leq \overline{p}$
for which~\eqref{spread_start} holds. Then $u$ satisfies
\begin{equation}\label{lower_spread_first1}
\forall \eps\in(0,1), \qquad 
\liminf_{t \to +\infty}
\bigg( \inf_{x  \in  (1- \varepsilon)t   W_{c_1}  } \big(u(t,x) -  \overline{p}(x)\big)\bigg) \geq 0,
\end{equation}
where, consistently with the first item of Definition~\ref{def:W},
\Fi{W1}
W_{c_1} := \bigcap_{e \in S^{N-1}}
\{ x \in \R^N \ | \ x \cdot e \leq c_1 (e) \}.
\Ff
\end{prop}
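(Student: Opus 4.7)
The plan is to use the invasion hypothesis to build a ``plateau'' on which $u$ is close to $\overline{p}$, then to propagate this plateau in each direction $e\in\Sph$ by comparison with planar-like subsolutions (whose spreading is controlled by Theorem~\ref{th:planar_speeds}), and finally to combine these one-direction estimates via compactness of $\Sph$. First, by~\eqref{spread_start} and Lemma~\ref{lem:perturb0}, for any $\delta\in(0,\delta_0)$ (with $\delta_0$ from Lemma~\ref{lem:perturb0} applied to $\overline{p}$) and any $R>0$, there exists $T=T(R,\delta)>0$ such that $u(T,x)\geq \overline{p}(x)-\delta\varphi_0(x)$ for $|x|\leq R$, where $\varphi_0$ is the principal eigenfunction associated with~$\overline{p}$.

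Next, for each direction $e\in\Sph$ and any $\eps'\in(0,c_1(e))$, I would establish the one-direction spreading estimate: for every $M>0$,
\[
\liminf_{t \to +\infty} \inf_{\substack{x\cdot e\,\leq\,(c_1(e)-\eps')t \\ |x|\,\leq\, Mt}} \bigl(u(t,x)-\overline{p}(x)\bigr) \geq 0.
\]
Theorem~\ref{th:planar_speeds} gives exactly this when applied to a planar-like solution $\underline{u}^e$ of~\eqref{eq:parabolic} whose initial datum $\underline{u}_0^e$ is close to $\overline{p}-\delta$ for $x\cdot e\to-\infty$ and vanishes for $x\cdot e$ large. The main obstacle is that $\underline{u}_0^e$ cannot be placed globally below the compactly supported $u(T,\cdot)$ because of its ``$\overline{p}$-like tail'' at $x\cdot e\to-\infty$. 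I would resolve this by truncating $\underline{u}_0^e$ to the ball of radius $R$ given by the invasion step, yielding a compactly supported datum $\underline{u}_0^{e,R}\leq u(T,\cdot)$, and then invoking continuous dependence on initial data to ensure that the truncated solution $\underline{u}^{e,R}$ approximates the planar-like $\underline{u}^e$ on any fixed compact set as $R\to+\infty$. An iterative bootstrap, in which one re-invokes the invasion hypothesis at later times to grow the truncation radius and push the spreading further, converts this local approximation into the desired one-direction estimate.

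Finally, to combine the one-directional estimates into the Wulff-shape estimate, I would use compactness of $\Sph$ together with the lower semi-continuity of $c_1$ from Lemma~\ref{lem:lsc}, which also yields $\inf c_1>0$. Fix $\eps\in(0,1)$ and set $\alpha:=\tfrac{\eps}{2}\inf c_1>0$. By lower semi-continuity, the open sets $\{e'\in\Sph:c_1(e')>c_1(e)-\alpha\}$ form an open cover of $\Sph$; extract a finite subcover associated with directions $e_1,\dots,e_J$. Taking $\eps':=\alpha$ and choosing $M$ so that $W_{c_1}$ is contained in the ball of radius $M$, the one-direction estimates at $e_1,\dots,e_J$ combine, for $t$ large, into the lower bound $u(t,x)\geq\overline{p}(x)-\eta$ on $\bigcap_{j=1}^J\{x\cdot e_j\leq(c_1(e_j)-\alpha)t\}\cap\{|x|\leq Mt\}$. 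Since $(1-\eps)c_1(e)\leq c_1(e)-\eps\inf c_1<c_1(e)-\alpha$ for every $e\in\Sph$, the set $(1-\eps)tW_{c_1}$ is contained in this intersection, and sending $\eta\to0$ proves the claim. The crux of the argument, and what I expect to be the most delicate point, is the truncation-and-bootstrap step, which must carefully coordinate the spreading time, the truncation radius, and the growth of the invasion region to recover the full planar spreading speed from a compactly supported datum.
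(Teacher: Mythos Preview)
Your one-direction estimate is too strong to be true, and this is a genuine gap. You claim that for every $M>0$,
\[
\liminf_{t \to +\infty}\ \inf_{\substack{x\cdot e\,\leq\,(c_1(e)-\eps')t \\ |x|\,\leq\, Mt}} \bigl(u(t,x)-\overline{p}(x)\bigr) \geq 0,
\]
but this fails whenever the ball $\{|x|\leq Mt\}$ extends outside $(1+o(1))tW_{c_1}$. For a concrete instance, take a direction $e'$ with $e'\cdot e\leq0$; the point $x=Mte'$ satisfies both constraints, yet by the upper bound (Proposition~\ref{spread_upper} applied to the first intermediate platform) $u(t,Mte')$ stays bounded away from $\overline{p}$ as soon as $M>w_1(e')$. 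In other words, spreading towards $\overline{p}$ from compactly supported data occurs only inside the Wulff shape, not in half-spaces; this is precisely the Freidlin--G\"artner phenomenon, and your ``truncation-and-bootstrap'' description does not circumvent it. Truncating the planar-like datum to a ball produces a compactly supported datum whose solution again spreads with shape $W_{c_1}$, not with a half-space, so no amount of bootstrapping recovers the half-space estimate you need.

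The paper's argument is structurally different. It proves a key lemma (Lemma~\ref{lem:spread_ini}) constructing, for each $\eps$, a single compactly supported subdatum whose solution already satisfies~\eqref{lower_spread_first1}; Proposition~\ref{spread_lower_first} then follows by one comparison via~\eqref{spread_start}. The lemma is proved by contradiction: one takes a smooth compact approximation $\widetilde W$ of $W_{c_1}$, considers the first time $t_n$ at which the solution $u_n$ (with datum close to $\overline{p}$ on $B_n$) drops below a fixed subsolution $\underline u$ somewhere on $t_n\widetilde W$, and shows that the contact point $x_n$ must lie on $\partial(t_n\widetilde W)$. Because $t_n\to+\infty$, the dilated boundary $\partial(t_n\widetilde W)$ ``flattens'' near $x_n$ to a half-space with normal $\nu$, moving at normal speed strictly below $c_1(\nu)$; passing to the limit along translates, one obtains an entire solution that lies above $\underline u$ on a genuine moving half-space, and \emph{then} the planar result Theorem~\ref{th:planar_speeds} yields a contradiction. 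The point is that the planar comparison is invoked only after the geometry has reduced the problem to an actual half-space, which is exactly the step your direct approach cannot supply.
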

We emphasise that Proposition~\ref{spread_lower_first} only requires Assumption~\ref{ass:multi} and the positivity of the speeds of the terraces. We 
remind that the latter ensures that the terrace is unique (up to translation of its profiles) in any given 
direction, owing to Theorem~\ref{th:uniqueness}.
However, Proposition~\ref{spread_lower_first} only concerns the uppermost state of the terrace.
In the sequel we will apply it to terraces connecting some lower positive state to 0, then
Assumptions~\ref{ass:same_shape} or~\ref{ass:tangentialC1} will be crucial to obtain the sharp estimate on the spreading speeds. For instance, Assumption~\ref{ass:same_shape} ensures that for any $1 \leq k \leq K-1$, the propagating terrace connecting the platform $q_{k}$ to $0$ is a subset of the original terrace connecting $\ol p$ to $0$. Hence the uppermost front of the former is 
just the $k$-th front of the latter, regardless of the direction.

We prove Proposition~\ref{spread_lower_first} by combining and refining the 
geometric arguments employed in~\cite{RossiFG} and \cite{Holes} to derive respectively
the Freidlin-G\"artner formula  
and the sufficient condition for invasion
(i.e.~the local convergence to $\ol p$) in the bistable case. 
These arguments make a link between compactly supported initial data and
planar data, hence will allow us to use the spreading speeds for the latter
already established in Theorem~\ref{th:planar_speeds}.

For later use, we rewrite  the set~$W_{c_1}$ as follows:
\Fi{W1bis}
W_{c_1}:= \{ r e  \ | \ e \in S^{N-1} \text{ and } 0 \leq r \leq w_1 (e)  \}
\quad\text{ with }\
w_1 (e) := \inf_{\substack{ e' \in S^{N-1}\\ e' \cdot e >0}} \frac{c_1 (e')}{e'\cdot e},
\Ff
which is possible because the function $e \mapsto c_1(e)$ is positive, by~\eqref{c1>0}.

We start by showing that the estimate in~\eqref{lower_spread_first1} holds for a family of solutions 
indexed by $\eps$, 
then we will 
deduce the estimate for arbitrary $\eps>0$ for any function fulfilling~\eqref{spread_start} by a comparison argument.
\begin{lem}\label{lem:spread_ini}
Under Assumption~\ref{ass:multi} and~\eqref{c1>0}, for any
$\eps\in(0,1)$,
there exists a solution~$u^\eps$ with
a compactly supported, continuous initial datum $0 \leq u^\eps_0 < \overline{p}$ 
such~that
\Fi{utop}
\lim_{t \to +\infty} 
\bigg(\sup_{x \in (1-\varepsilon) t W_{c_1}} \big| u^\eps(t,x) - \ol p (x)\big|\bigg) = 0,
\Ff
with $W_{c_1}$ given equivalently by~\eqref{W1} or~\eqref{W1bis}. 
\end{lem}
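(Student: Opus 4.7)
} My strategy is to construct a specific compactly supported initial datum $u_0^\eps$ whose associated solution first invades $\ol p$ locally, and then, after a suitable waiting time, dominates from below planar-like solutions propagating in every direction of $\Sph$. The Wulff shape $W_{c_1}$ will arise as the intersection over $e\in\Sph$ of the regions $\{x\.e\leq(c_1(e)-\eps')t\}$ invaded by these planar-like solutions thanks to Theorem~\ref{th:planar_speeds}.

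First I would fix $\eta>0$ small enough that Lemma~\ref{lem:perturb0} applied at $p=\ol p$ guarantees that $\ol p-\eta$ lies in the basin of attraction of $\ol p$, and moreover so that $\eta$ lies in the basin of attraction of~$0$. I would then set
$$u_0^\eps(x):=(\ol p(x)-\eta)\chi_R(x),$$
where $\chi_R$ is a continuous cutoff equal to $1$ on $B_R$ and vanishing outside $B_{R+1}$, with $R$ to be chosen large. Then $0\leq u_0^\eps<\ol p$ and $u_0^\eps$ is continuous and compactly supported. An invasion argument in the spirit of Aronson--Weinberger (and of the subsolution constructions in~\cite{Holes}), based on comparison with a principal Dirichlet eigenfunction of the linearised operator at $\ol p$ on a sufficiently large ball, shows that for $R$ large enough the solution $u^\eps$ satisfies $u^\eps(t,x)\to\ol p(x)$ locally uniformly as $t\to+\infty$. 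In particular, for any prescribed $R'>0$ there exists $T=T(R,R')$ with $u^\eps(T,\cdot)\geq\ol p-\eta$ on $B_{R'}$.

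For each $e\in\Sph$, Theorem~\ref{th:planar_speeds} applied to a planar-like initial datum of the type $(\ol p-\eta)\mathbf{1}_{\{x\.e\leq 0\}}$ (mollified if needed) produces a solution $v^e$ enjoying $\liminf_{t\to+\infty}\inf_{x\.e\leq(c_1(e)-\eps/3)t}(v^e(t,x)-\ol p(x))\geq0$. Once $u^\eps(T,\cdot)\geq\ol p-\eta$ on a large enough ball $B_{R'}$, the parabolic comparison principle allows one to bound $u^\eps(T+\cdot,\cdot)$ from below by $v^e(\cdot,\cdot-\xi_e)$ for some shift $\xi_e$; this yields convergence to $\ol p$ on each moving half-space $\{x\.e\leq(c_1(e)-\eps/2)t\}$, for every fixed direction $e$. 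To upgrade this pointwise-in-$e$ statement to the uniform statement~\eqref{utop}, I would invoke the lower semi-continuity and strict positivity of $e\mapsto c_1(e)$ provided by Lemma~\ref{lem:lsc}, together with a finite covering argument on $\Sph$ entirely analogous to the one used in the proof of Proposition~\ref{spread_upper}: one selects directions $e_1,\dots,e_J$ such that $\bigcap_j\{x\.e_j\leq(c_1(e_j)-\eps/2)t\}\supseteq(1-\eps)tW_{c_1}$ for large $t$, and then chooses $R'$ (hence $R$) large enough to dominate the planar-like initial data of $v^{e_1},\dots,v^{e_J}$ simultaneously.

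The main technical obstacle will be this last uniformity step: the shifts $\xi_{e_j}$ and the radius $R'$ required to fit the planar-like subsolutions under $u^\eps(T,\cdot)$ must be controlled in~$j$, which is precisely where the compactness of $\Sph$ combined with Lemma~\ref{lem:lsc} enters. Once this is achieved, the finite intersection of the invaded half-spaces provides~\eqref{utop}, and Lemma~\ref{lem:spread_ini} follows.
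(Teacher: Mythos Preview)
Your proposal contains a genuine gap at the comparison step. You write that once $u^\eps(T,\cdot)\geq\ol p-\eta$ on a large ball $B_{R'}$, the parabolic comparison principle yields $u^\eps(T+\cdot,\cdot)\geq v^e(\cdot,\cdot-\xi_e)$ for some shift~$\xi_e$. But the initial datum of $v^e$ equals $(\ol p-\eta)\mathbf{1}_{\{x\cdot e\leq0\}}$, which is close to~$\ol p$ on an entire \emph{unbounded} half-space. The function $u^\eps(T,\cdot)$, being the solution at finite time of a Cauchy problem with compactly supported initial datum, satisfies $u^\eps(T,x)\to0$ as $|x|\to\infty$ (this is seen e.g.~by comparison with the supersolution $\min\{\ol p,e^{-(x\cdot e-\ol c t)}\}$ used in the proof of Lemma~\ref{lem:lsc}). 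Hence no choice of $R'$ or $\xi_e$ can make $u^\eps(T,\cdot)$ dominate $v^e(0,\cdot-\xi_e)$ on all of~$\R^N$, and the comparison principle does not apply. This is not a technicality: the impossibility of trapping a compactly-supported solution from below by planar fronts is precisely what makes the lower bound for compactly supported data harder than the upper bound (where the opposite comparison works, as in Proposition~\ref{spread_upper}).

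The paper circumvents this obstruction by an indirect argument. It takes a sequence $(u_n)$ with initial data close to~$\ol p$ on $B_n$, assumes by contradiction that~\eqref{u>ul} fails for every~$n$, and looks at the first time $t_n$ where $u_n$ touches the strict subsolution $\ul u=\ol p-\eta\vp_0$ on the expanding smooth set $t\widetilde W$. The contact point $x_n$ must lie on $\partial(t_n\widetilde W)$; after translating by $x_n$ and passing to the limit $n\to\infty$, the boundary $\partial(t_n\widetilde W)$ flattens to a hyperplane with normal~$\nu$ moving at speed $\widetilde w(e)e\cdot\nu\leq(1-\eps/3)c_1(\nu)$, and the limit entire solution $\t u_\infty$ \emph{does} dominate~$\ul u$ on the corresponding moving half-space for all negative times. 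Only then can Theorem~\ref{th:planar_speeds} be invoked, on~$\t u_\infty$ rather than on the original $u_n$, producing the contradiction. In short, the planar comparison is applied not to the compactly-supported solution itself, but to a limiting object extracted from a hypothetical failure of the estimate.
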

\begin{proof} The proof is carried out in several steps. Throughout the proof, $\eps\in(0,1)$ is fixed.

\Step{1}{A sufficient condition.}
First of all, we know that the mapping $\Sph\ni e \mapsto c_1 (e)$ is positive and
lower semi-continuous, by Lemma~\ref{lem:lsc}.
This implies that $\min_e c_1(e)>0$ and therefore we deduce from \cite[Proposition 2.4]{RossiFG}
that the mapping $\Sph\ni e \mapsto w_1(e)$ defined in~\eqref{W1bis} is positive and continuous.
Because of this, we can find
a smooth function $\widetilde{w} : S^{N-1} \to \mathbb{R}$ satisfying
$$( 1 - \varepsilon ) w_1 <  \widetilde{w} \leq 
\Big( 1 - \frac{\varepsilon}2 \Big) w_1. $$
We then call
$$\widetilde{W} : = \{ re \ | \ e \in S^{N-1} \text{ and } 0 \leq r \leq \widetilde{w} (e) \}.$$
This is a smooth compact set which is star-shaped with respect to the origin and satisfies
$$( 1 - \varepsilon ) W_{c_1} \subset \inter(\widetilde{W})\quad\text{and}\quad
\widetilde{W} \subset 
\Big( 1 - \frac{\varepsilon}2 \Big) W_{c_1}.$$
Next, we consider the quantity $\delta>0$ provided by Lemma~\ref{lem:perturb0}, and  
$\vp_0$ the periodic principal eigenfunction
of the linearised operator around $\ol p$. We then set $\eta:=\delta\min\varphi_0$ and define  the function
$$\ul u:=\ol p - \eta\vp_0.$$ 
By Lemma~\ref{lem:perturb0}, this is a strict
subsolution to~\eqref{eq:parabolic} which lies in the basin of attraction of~$\ol p$.
We claim that if a solution $u^\eps$ to~\eqref{eq:parabolic} fulfils 
\Fi{u>ul}
\forall t\geq0,\ \forall x\in t\,\widetilde{W},\quad
u^\eps (t,x)>\ul u(x),
\Ff
then it also satisfies the estimate \eqref{utop}.
Consider indeed an arbitrary positive sequence~$(t_j)_{j\in\N}$ diverging to $+\infty$, and let~$(x_j)_{j \in \N}$ be
such that $x_j$ is a maximising point for $| u^\eps(t,\.) - \ol p|$ 
on the compact set $t_j(1-\eps) W_{c_1}$.
Let then $(h_j)_{j\in\N}$ in~$\Z^N$ be such that $(x_j - h_j)_{j \in \N}$ is bounded. 
Since $x_j/t_j\in(1-\varepsilon)W_{c_1}$, which is a compact contained in the interior of $\t W$, we have
that, for any given~$(t,x)\in\R\times\R^N$, $\frac{h_j+x}{t_j+t}\in\t W$ for $j$ sufficiently large.
As a consequence, if~\eqref{u>ul} holds, it implies that $u^\eps(t_j+t,h_j+x)>\ul u(x)$ for such values of $j$. Therefore $u^\eps(t_j+\.,h_j+\.)$ converges
(up to subsequences) locally uniformly to an entire in time solution~$u_\infty$ 
of~\eqref{eq:parabolic}  satisfying 
$$\forall(t,x)\in\R\times\R^N,\quad
\ul u(x)\leq u_\infty(t,x)\leq\ol p(x).$$
Since~$\ul u$ belongs to the basin of attraction of~$\ol p$,
one readily deduces by comparison that~$u_\infty\equiv\ol p$.
We have thereby shown that property~\eqref{u>ul} is sufficient to have~\eqref{utop}.

\Step{2}{The contradictory assumption.}
Consider a sequence of solutions~$(u_n)_{n\in\N}$ to~\eqref{eq:parabolic} whose initial data are compactly supported, continuous, fulfil~$0\leq u_n(0,\cdot)<\ol p$ and moreover
$$\forall x\in B_n,\quad
u_n(0,x)\geq \ol p(x)-\frac1n.$$
Here $B_n$ denotes the ball of radius $n$ centred at the origin. The following property is readily deduced from parabolic estimates: 
\Fi{un->p}
\lim_{n\to+\infty}u_n(t,x)=\ol p(x) \quad\text{locally uniformly in $(t,x)\in[0,+\infty)\times\R^n$}.
\Ff
We claim that
the function $u_n$ fulfils~\eqref{u>ul} for $n$ sufficiently large, depending on $\eps$,
hence by the previous step it satisfies~\eqref{utop}.
%
%
%
%
%
%
%
Assume by contradiction that this is not the case. Then, for any $n\in\N$, 
it holds that
$$t_n:=\inf\{t\geq0\ :\  \exists x\in t\,\widetilde{W},\ u_n(t,x)\leq\ul u(x)\}\,<+\infty.$$
We know from \eqref{un->p} that $t_n\to+\infty$ as $n\to+\infty$.
In particular, $t_n>0$ for $n$ sufficiently large and it follows
from the definition of $t_n$ that 
\Fi{before-contact}
	\forall t\in[0,t_n), \ \forall x\in t\,\widetilde{W},\quad
	u_n(t,x)>\ul u(x),
\Ff
and, moreover, being $t_n \widetilde{W} $ compact,
that there exists $x_n\in t_n \widetilde{W} $ such that
$$u_n(t_n,x_n)=\ul u(x_n).$$
Recall that $\ul u$ is a strict subsolution, hence the parabolic strong maximum principle
necessarily implies that $x_n\in \partial (t_n \widetilde{W} )$, that is, 
$x_n/t_n\in \partial\widetilde{W}$.

Consider now $h_n \in \mathbb{Z}^N$ such that $\xi_n := x_n - h_n \in [0,1)^N$.
We define
$$\t u_n (t,x) := u_n (t_n+t,h_n+x).$$
Up to extraction of a subsequence, the following limits exist:
$$\xi_n \to \xi_\infty \in [0,1]^N,\qquad
x_n/t_n\to \zeta\in \partial\widetilde{W}.$$
Also, always up to subsequences,
by standard parabolic estimates and spatial periodicity of the equation, the functions~$\t u_n$ converge 
to $\t u_\infty$, an entire solution of \eqref{eq:parabolic} which fulfils by construction (and
by periodicity of $\ul u$)
\Fi{uinfty=}
\t u_\infty (0,\xi_\infty) = \ul u(\xi_\infty).
\Ff
Moreover,~\eqref{before-contact} rewrites for the $\t u_n$ as
\Fi{before-contact-bis}
\forall t\in[-t_n,0), \ \forall x\in (t_n+t) \widetilde{W}-\{x_n\},\quad
\t u_n(t,x+\xi_n)>\ul u(x+\xi_n).
\Ff
We assert that this entails 
\begin{equation}\label{eq:limit_shape}
\forall t \leq 0,  \; \; 
\forall x\cdot \nu   \leq \left(1 - \frac{\varepsilon}{3} \right) c_1 (\nu  )\, t, 
\quad \t u_\infty (t,x+\xi_\infty) \geq \ul u(x+\xi_\infty),
\end{equation}
where $\nu$ is the 
outward unit normal vector to $\widetilde{W}$ at the point $\zeta$ and, 
we recall, $c_1 (\nu)$ is the speed of the uppermost front
of the terrace $\mc{T}^\nu$ in the direction $\nu$.

The first crucial observation to derive~\eqref{eq:limit_shape}
is that the $t$-dependent 
sets 
$(t_n+t) \widetilde{W}$ expand at a given boundary point 
$(t_n+ t )\widetilde w (e)e$
with the (positive) constant normal speed $\widetilde w (e)e\.\t\nu$, where $\t\nu$ is the outward normal at that
point, hence $e\.\t\nu>0$.
The second observation is that~$0\in\partial\big(t_n\widetilde{W}-\{x_n\}\big)$, for any $n\in\N$,
and that  the normal at that point converges to~$\nu$.
The last one is that, because $\widetilde{W}$ is compact and smooth, it satisfies 
uniform interior and exterior sphere 
conditions of some radius $\rho>0$ on the boundary, 
whence its dilation $(t_n + t )\widetilde{W}$
fulfils these conditions with radius $(t_n + t )\rho$, which for any~$t$ tends to~$+\infty$ as
$n\to+\infty$. This means that $(t_n + t )\widetilde{W}$ ``flattens'' to a half-space around each of
its boundary points as 
$n\to+\infty$.
These geometric observations are made rigorous in the proof of~\cite[Theorem~2.3]{RossiFG}, 
leading to the conclusion
that, as $n \to +\infty$, the set $(t_n + t )\widetilde{W} - \{ x_n \}$ invades the half-space 
$$ \{ x \cdot \nu <  \widetilde{c} t  \},$$
where
$$\widetilde{c} := \widetilde{w} \left( e \right) e\.\nu ,\qquad
e:=\frac{\zeta}{|\zeta|},$$
and, we recall, $\zeta$ is the limit of $x_n/t_n$ and
 $\nu$ is the outward normal to $\widetilde{W}$ at $\zeta$. 
Therefore, we deduce from~\eqref{before-contact-bis} that
$$\forall t\leq0,\ \forall x \cdot \nu <  \widetilde{c} t,
\quad
\t u_\infty (t,x + \xi_\infty ) \geq \ul u (x+\xi_\infty).$$ 
Now, our choice of $\widetilde{w}$ and the definition~\eqref{W1bis} of $w_1$ imply that
$$\widetilde{c}\leq\Big( 1 - \frac\varepsilon3\Big) w_1(e) e\.\nu\leq \Big( 1 - \frac\varepsilon3\Big)c_1(\nu), $$
and therefore 
\eqref{eq:limit_shape} holds.

\Step{3}{Conclusion.}
Let $h\in \Z^N$ be such that $h\.\nu<0$. For $m\in\N^*$ define 
the time $t_m$ by
\Fi{tm}
\left(1 - \frac{\varepsilon}{3} \right) c_1 (\nu  )\, t_m:=m h\.\nu<0.
\Ff
We find from~\eqref{eq:limit_shape} (and the periodicity of $\ul u$) that
$$\forall x\.\nu\leq0,\quad
\t u_\infty(t_m,x+m h+\xi_\infty)\geq \ul u(x+\xi_\infty).$$
It follows by comparison that 
\Fi{uinfty>v}
\forall t\geq0,\ \forall x\in\R^N,\quad
\t u_\infty(t+t_m,x+m h)\geq v(t,x),
\Ff
where $v$ is the solution to~\eqref{eq:parabolic} emerging from the
initial datum
$$1_{(-\infty,0]}\big((x-\xi_\infty)\.\nu\big)\,\ul u(x).$$
We recall 
that $\ul u(x)=\ol p - \eta\vp_0\geq \ol p - \eta$, and that $\ol p - \eta\geq \ol p - \delta\varphi_0$
belong to the basin of attraction of~$\ol p$ by the choice of $\delta$.
Therefore, Theorem \ref{th:planar_speeds} applies to~$v$ and yields
$$\lim_{t \to +\infty} 
\bigg(\sup_{ x \cdot \nu \leq \big(1 - \frac\eps3 \big)c_1(\nu) t} \big|v(t,x) - \ol p (x)\big|\bigg)  = 0.$$
Then, in particular, recalling the definition~\eqref{tm} of $t_m$ (which yields
$t_m\to-\infty$ as $m\to+\infty$), we~get
$$\lim_{m\to+\infty}v(-t_m,-m h)=\ol p(0).$$
As a consequence, since by~\eqref{uinfty>v}
$$v(-t_m,-m h)\leq\t u_\infty(0,0),$$
we conclude that $\t u_\infty(0,0)\geq\ol p(0)$, and therefore 
$\t u_\infty(0,0)\equiv\ol p(0)$ by the parabolic strong maximum principle.
This contradicts~\eqref{uinfty=}.
\end{proof}

%
\begin{proof}[Proof of Proposition~\ref{spread_lower_first}] The result readily follows from Lemma~\ref{lem:spread_ini}. 
	Indeed, for any $\eps\in(0,1)$, considering the initial datum $u^\eps_0$ provided by that lemma, one infers from~\eqref{spread_start} the existence of some large enough $T_\eps>0$ for which 
$$u(T_\eps,\cdot) \geq u^\eps_0.$$
By comparison one obtains
$$\liminf_{t \to +\infty} 
\bigg( \inf_{x  \in  (1- \varepsilon)t   W_{c_1}  } \big(u(T_\eps+t,x) -  \overline{p}(x)\big)\bigg) \geq 0.
$$
This, in turn, yields the estimate~\eqref{lower_spread_first1} with $\eps$ replaced by $2\eps$,
	by simply noticing that $(1-\varepsilon) t W_{c_1}\supset(1-2\varepsilon) (t+T_\eps) W_{c_1}$ if $\eps<1/2$,
	for $t$ sufficiently large,
	because $W_{c_1}$ is star-shaped with respect to the origin.
	\end{proof}

\subsection{Conclusion under Assumption~\ref{ass:same_shape}}



\begin{proof}[Proof of Theorem~\ref{th:spread_cpct1}]
On the one hand, under Assumption~\ref{ass:same_shape}, applying Proposition~\ref{spread_upper} to any platform $p = q_k$, we immediately get that
$$\forall \varepsilon >0, \; \; \forall k\in\{1,\dots, K\}, \quad \limsup_{t \to +\infty}
\bigg( \sup_{x   \in\, \R^N\setminus   (1+\varepsilon)t  W_{ c [q_k]}  } \big(u(t,x) -  q _k (x)\big)\bigg) \leq 0.$$
On the other hand, the restriction of \eqref{eq:parabolic} to solutions taking values between~$q_k$ and~$0$, is still of the multistable type in the sense of Assumption~\ref{ass:multi} (where $\overline{p}$ should then be replaced by $q_k$). Furthermore, the (unique) propagating terrace in direction~$e$ of the resulting problem, i.e.~connecting $q_k$ to $0$, is given by 
$$((q_j)_{k \leq j \leq K}, (U_j^e)_{k+1 \leq j \leq K }).$$
In other words, it is a subset of the terrace of the original problem, whose speeds are all positive. Applying Proposition~\ref{spread_lower_first} (replacing again $\overline{p}$ by $q_k$), we get that 
$$
\forall \eps \in (0,1), \; \; \forall k \in \{ 0, \dots , K-1 \}, \quad \liminf_{t \to +\infty}
\bigg( \inf_{x  \in  (1- \varepsilon)t   W_{c_{k+1}}  } \big(u(t,x) - q_k (x)\big)\bigg) \geq 0.
$$
Recalling that $c_{k+1} = c[q_{k+1}] $ (see again Definition~\ref{def:Wp}), we have proved both upper and lower estimates in Theorem~\ref{th:spread_cpct1}.
\end{proof}

\section{The case where the Wulff shapes are regular
	}\label{sec:cpct}

As we explained in the Introduction, Assumption~\ref{ass:same_shape} is not always fulfilled. 
We recall that our upper and lower bounds on the spreading shapes, Propositions~\ref{spread_upper} and~\ref{spread_lower_first} respectively,
have been derived without such an assumption. However, while Proposition~\ref{spread_upper} deals with propagation towards a generic stable state, 
Proposition~\ref{spread_lower_first} only deals with the uppermost state $\ol p$, which is not enough
to prove the spreading result, so far unless requiring Assumption~\ref{ass:same_shape}.
For this reason, we will first derive a general lower bound for the spreading shapes towards intermediate stable states.
This will not require any hypothesis other than
Assumption~\ref{ass:multi} and the positivity of the speeds of the terraces.
Then, we will show that this new estimate combines in a sharp way with the upper bound, Proposition~\ref{spread_upper},
and yields Theorem~\ref{th:spread_cpct11}, provided that Assumption~\ref{ass:tangentialC1} on the regularity of some Wulff shapes holds.

\subsection{A lower bound for the speed towards intermediate states}\label{sec:lower}

In order to
derive a lower bound of the set where~$u$ is asymptotically larger than or equal to a given stable
 steady state,
we make use of an iterative argument. This requires the introduction of some more notation, some of which is similar to that used in Assumption~\ref{ass:tangentialC1}. 
It is always understood here that the Assumption~\ref{ass:multi} is in force.

\begin{defi}\label{defi:upsw}
			 For a given 	linearly stable, periodic steady state~$0 < p \leq \ol p$ of~\eqref{eq:parabolic},
			and for a given direction $e \in S^{N-1}$, consider the speed~$c_1^p (e)$ given by Definition~\ref{def:Wp}.
	%
	%
	We call  
	\begin{equation}\label{eq:upsilon_def}
		\Upsilon_p:= 
		\begin{cases}
			\displaystyle W_{c_1^p} =
			\bigcap_{e\in\Sph}\big\{ x \in \R^N \ | \ x \cdot e \leq  c_1^p (e) \big\} & 
			\mbox{ if $ c_1^p (e) > 0$ for all $e \in S^{N-1}$}, \\[12pt]
			\ \ \big\{ 0 \big\} & \mbox{ otherwise,}
		\end{cases}
	\end{equation}
	and then, recursively,
	\Fi{W>pbar}
	\ul W_{\overline{p}}:=\Upsilon_{\overline{p}} ,
	\Ff
	\Fi{W>recursive}
	\ul W_p := \bigcup_{\substack{p' \in E^+ \\ p<p'\leq\ol p \\ \kappa\in[0,1]}}
	\Big(\kappa\, \ul W_{p'} +(1-\kappa)
	\Upsilon_p \Big),
	\Ff
	where $E^+$ denotes the set of linearly stable, positive, periodic steady states.
	\end{defi}
	
Let us explain why those definitions are well posed, 
and in particular why $\Upsilon_p$ does not depend on the choice of the terrace
	$\mathcal{T}_p^e$ in the Definition~\ref{def:Wp} of $c_1^p (e)$.
There are two situations: first, for any $e\in\Sph$, there exists a terrace $\mathcal{T}_p^e$
connecting~$p$ to~$0$ in the direction $e$ whose uppermost speed is positive,
in which case such terrace is unique by Theorem~\ref{th:uniqueness} 
and therefore $c_1^p (e)$ is well defined, and so is~$\Upsilon_p\neq\{0\}$;
second, there exists a terrace $\mathcal{T}_p^e$ in some direction $e$ whose uppermost speed is nonpositive,
hence any other terrace in that direction necessarily shares the same property (otherwise we may again apply Theorem~\ref{th:uniqueness}), whence~$\Upsilon_p=\{0\}$.
%
Notice that the first case in the definition of~$\Upsilon_p$ occurs if and only if
	\begin{equation}\label{hyp:lower0}
	\inf_{e \in S^{N-1}} c_1^p (e) > 0,
	\end{equation}
as a consequence of the lower semi-continuity of $e \mapsto c_1^p (e)$, 
	cf.~Lemma~\ref{lem:lsc} (see also \cite{Guo_speeds,RossiFG}). 
	
	The sets $\Upsilon_p$ are compact and convex. Therefore, the sets $\ul W_p$ 
	are compact because~$E^+$ is finite, cf.~Proposition \ref{pro:finitep}, but they are not convex in general. 
	Since the lower bound on the solutions will be expressed in terms of the sets $\ul W_p$, we make their recursive definition more explicit in the following.

\begin{lem}\label{lem:ulW}
Under Assumption~\ref{ass:multi}, let 
$$p_0\equiv\ol p\,, \ p_1\,,\ \cdots\;, \ p_M\equiv0$$
be the linearly stable, periodic, steady states between~$0$ and $\overline{p}$.
Then, for any $p>0$ among them, it~holds
\Fi{W>}
\ul W_p = \bigcup_{\substack{\kappa_0,\dots,\kappa_{M-1}\geq0,
		\ \sum_{i=0}^{M-1} \kappa_i=1\\
		\kappa_i>0 \text{ only if }p_i\geq p\\
		\kappa_i\kappa_j\neq0 \text{ only if }
		p_i\neq p_j\text{ on }\R^N}}
\ \sum_{i=0}^{M-1} \kappa_i \Upsilon_{p_i}.
\Ff
If in addition the stable states are ordered, that is,
$$p_0 \equiv \overline{p} > p_1 > \cdots > p_M \equiv 0,$$ 
then, for all $k\in\{0,\dots,M-1\}$, it holds
\begin{equation}\label{w_p_conv}
\ul W_{p_k}= \Conv\Bigg(\bigcup_{j=0}^k
\Upsilon_{p_j}\Bigg),
\end{equation}
where $\Conv$ stands for convex hull.
\end{lem}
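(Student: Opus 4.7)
My plan is to prove the first identity~\eqref{W>} by induction on the position of $p$ within the finite set $E^+$ of linearly stable positive periodic steady states (a topological sort of the pointwise partial order starting from $p_0 \equiv \ol p$), and then deduce~\eqref{w_p_conv} as an immediate corollary. For the base case $p = \ol p$, one simply observes that~\eqref{W>pbar} gives $\ul W_{\ol p} = \Upsilon_{\ol p}$ while the right-hand side of~\eqref{W>} collapses to the single term $\kappa_0 = 1$, since the constraint $p_i \geq \ol p$ forces $i = 0$.

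For the inductive step, assuming~\eqref{W>} has been proven for every $p' \in E^+$ strictly above $p$, I would prove both inclusions separately. The forward inclusion is reasonably direct: any element of $\ul W_p$ reads $\kappa y + (1-\kappa)\Upsilon_p$ with $p < p' \leq \ol p$ and $y \in \ul W_{p'}$; expanding $y = \sum_i \kappa_i \Upsilon_{p_i}$ by induction and setting $\mu_i := \kappa \kappa_i$, together with $\mu_j := 1-\kappa$ for the index $j$ with $p_j = p$, yields the required representation. The only nontrivial point here is the distinctness constraint $p_i \neq p_j$ on $\R^N$, which follows from the elliptic strong maximum principle: the hypothesis $p < p'$ in the pointwise partial order (with $p \neq p'$) yields $p'(x) > p(x)$ strictly on $\R^N$, so $p_i(x) \geq p'(x) > p(x) = p_j(x)$ for every $x$ and every $i$ contributing to $y$.

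For the reverse inclusion, given a sum $\sum_i \mu_i \Upsilon_{p_i}$ in the right-hand side of~\eqref{W>}, I would split into two cases. When every positive-weight index satisfies $p_i = p$, the distinctness of the listed $p_i$'s combined with the pointwise-distinctness constraint forces the sum to reduce to $\Upsilon_p$, which lies in $\ul W_p$ by taking $\kappa = 0$ and any $p' \in E^+$ strictly above $p$ in~\eqref{W>recursive} (such a $p'$ exists, e.g.\ $\ol p$ itself, since $p < \ol p$). Otherwise the nonempty set $S := \{p_i : \mu_i > 0, \, p_i > p\}$ is totally ordered under pointwise inequality (by the distinctness constraint) and thus admits a minimum $p^* \in E^+$; setting $\kappa := \sum_{p_i > p} \mu_i \in (0,1]$, the decomposition
\[
\sum_i \mu_i \Upsilon_{p_i} = \kappa \bigg(\sum_{p_i > p} \frac{\mu_i}{\kappa}\Upsilon_{p_i}\bigg) + (1-\kappa)\Upsilon_p
\]
places the bracketed sum in $\ul W_{p^*}$ by induction (each contributing $p_i \geq p^*$ by minimality, and the pointwise-distinctness constraint is inherited), so that the whole expression lies in $\ul W_p$.

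Once~\eqref{W>} is established, the second identity~\eqref{w_p_conv} will follow immediately: when the stable states are totally ordered, any two distinct $p_i, p_j$ automatically differ pointwise on $\R^N$, so the distinctness constraint in~\eqref{W>} is vacuous and the right-hand side for $p = p_k$ simply becomes the set of all convex combinations of $\Upsilon_{p_0}, \dots, \Upsilon_{p_k}$, that is, $\Conv(\bigcup_{j=0}^k \Upsilon_{p_j})$. I expect the main obstacle to be the reverse inclusion in the inductive step—specifically, singling out the appropriate $p^* \in E^+$ to invoke the inductive hypothesis, which hinges on the chain structure of the positive-weight states guaranteed by the pointwise-distinctness constraint; without that constraint, the union structure in~\eqref{W>recursive} (which commits to a single $p'$ at each recursive step) would not suffice to generate arbitrary combinations of incomparable steady states.
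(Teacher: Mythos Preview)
Your proof is correct and follows essentially the same inductive approach as the paper. The paper organizes the inductive step as a direct chain of set equalities (factoring out the $\Upsilon_p$ contribution and identifying the remainder with $\bigcup_{p'>p}\ul W_{p'}$), whereas you split it into two inclusions, but the core idea is identical: the pointwise-distinctness constraint forces the positive-weight states to form a chain, allowing you to single out a minimum $p^*$ and invoke the inductive hypothesis---the paper uses this implicitly in its ``observe that'' step, while you make it explicit. One small point: for~\eqref{w_p_conv}, your claim that the Minkowski convex combinations $\bigcup_{\sum\kappa_i=1}\sum_i\kappa_i\Upsilon_{p_i}$ coincide with $\Conv(\bigcup_i\Upsilon_{p_i})$ relies on the convexity of each~$\Upsilon_{p_i}$; the paper spells this out, so you should mention it.
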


\begin{proof}
 	Let us prove~\eqref{W>}.
 	For $p=\ol p$, the identity~\eqref{W>} reduces exactly to the definition~\eqref{W>pbar}.
 	Consider now $p\neq\ol p$. We can assume without loss of generality that $p=p_{M-1}$.
	The right-hand side in~\eqref{W>} can be rewritten as
	 $$\bigcup_{\kappa\in[0,1]}\Bigg(
	 \kappa\Upsilon_p\ +\
	 \bigcup_{\substack{\kappa_0,\dots,\kappa_{M-2}\geq0,\ 
	 \ \sum_{i=0}^{M-2} \kappa_i=1-\kappa\\
	 		\kappa_i>0 \text{ only if }p_i>p\\
	 		\kappa_i\kappa_j\neq0 \text{ only if }
	 		p_i\neq p_j\text{ on }\R^N}}
	 \ \sum_{i=1}^{M-2} \kappa_i \Upsilon_{p_i}\Bigg),$$
	 which in turn is equal to
	 	 $$\bigcup_{\kappa\in[0,1]}\Bigg(
	 	 \kappa\Upsilon_p\ +\
	 	 (1-\kappa)\bigcup_{\substack{\kappa_0,\dots,\kappa_{M-2}\geq0,\ 
	 	 		\ \sum_{i=0}^{M-2} \kappa_i=1\\
	 	 		\kappa_i>0 \text{ only if }p_i>p\\
	 	 		\kappa_i\kappa_j\neq0 \text{ only if }
	 	 		p_i\neq p_j\text{ on }\R^N}}
	 	 \ \sum_{i=1}^{M-2} \kappa_i \Upsilon_{p_i}\Bigg).$$
	 Observe that 
	 $$\bigcup_{\substack{\kappa_0,\dots,\kappa_{M-2}\geq0,\ 
	 		\ \sum_{i=0}^{M-2} \kappa_i=1\\
	 		\kappa_i>0 \text{ only if }p_i>p\\
	 		\kappa_i\kappa_j\neq0 \text{ only if }
	 		p_i\neq p_j\text{ on }\R^N}}
	 \ \sum_{i=1}^{M-2} \kappa_i \Upsilon_{p_i}
	 	 	 =
	 	\bigcup_{p'>p}\
	 	\bigcup_{\substack{\kappa_0,\dots,\kappa_{M-2}\geq0,\
	 			\ \sum_{i=0}^{M-2} \kappa_i=1\\
	 			\kappa_i>0 \text{ only if }p_i\geq p'\\
	 			\kappa_i\kappa_j\neq0 \text{ only if }
	 			p_i\neq p_j\text{ on }\R^N}}
	 	\ \sum_{i=1}^{M-2} \kappa_i \Upsilon_{p_i}.$$
	If we assume that~\eqref{W>}
	holds for all stable steady states $p'>p=p_{M-1}$, then the above right-hand side is equal to $\bigcup_{p'>p}\ul W_{p'}$,
	and therefore~\eqref{W>} holds true for $p$, by the definition \eqref{W>recursive}.
	The result is thereby proved by iteration on the number of stable steady states above $p$.
%

Suppose now that the stable states are ordered. 
Then the identity~\eqref{W>} reads, for any $k\in\{0,\dots,M-1\}$,
$$\ul W_{p_k} = \bigcup_{\kappa_0,\dots,\kappa_{k}\geq0,\ 
	\sum_{i=0}^k \kappa_i=1}
\ \sum_{i=0}^{k} \kappa_i \Upsilon_{p_i}.$$
It is a classical fact that, being the sets $\Upsilon_{p_i}$ convex, such a set coincides with the convex hull of 
$\bigcup_{j=0}^k\Upsilon_{p_j}$. Indeed one immediately has
$$\bigcup_{j=0}^k
\Upsilon_{p_j}\subset
\ul W_{p_k} \subset \Conv\Bigg(\bigcup_{j=0}^k
\Upsilon_{p_j}\Bigg),$$
and it remains to show that 
$\ul W_{p_k}$ is convex. For this, take two elements $x,y\in \ul W_{p_k}$, that is,
$$x=\sum_{i=0}^{k} \kappa_ix_i, \quad y=\sum_{i=0}^{k} h_iy_i,
\quad\text{with \ $x_i,y_i\in\Upsilon_{p_i},\quad
\kappa_i,h_i\geq0,\ \ \sum_{i=0}^k \kappa_i=\sum_{i=0}^k h_i=1$,}$$
and $\lambda\in[0,1]$. Then
$$\lambda x+(1-\lambda)y=\sum_{i=0}^{k} [\lambda \kappa_ix_i+(1-\lambda)h_iy_i].$$
Observing that 
$$\lambda \kappa_i x_i+(1-\lambda)h_iy_i=(\lambda \kappa_i+(1-\lambda)h_i)z_i, \quad \text{where \ }
z_i=\frac{\lambda \kappa_i}{\lambda \kappa_i+(1-\lambda)h_i}\,x_i+
\frac{(1-\lambda)h_i}{\lambda \kappa_i+(1-\lambda)h_i}\,y_i,
$$
belongs to $\Upsilon_{p_i}$ by convexity, one eventually deduces
$$\lambda x+(1-\lambda)y=\sum_{i=0}^{k}(\lambda \kappa_i+(1-\lambda)h_i)z_i\in \ul W_{p_k}.$$
%
This concludes the proof of the lemma.
%
\end{proof}

Lemma~\ref{lem:ulW} shows that $\ul W_p$ is composed by the convex hulls of the unions 
of the $(\Upsilon_{p_i})_i$ corresponding to all monotone families of stable
steady states $(p_i)_i$ larger than or equal to $p$.
In the case where the intermediate stable states do not intersect each other (i.e.~they are totally ordered)
the sets~$\ul W_p$ are convex, and they are monotone nonincreasing with respect to $p$, in the sense that the smaller $p$,
the larger~$\ul W_p$.

We can now state our general lower bound for a given stable steady state $p$.
\begin{prop}\label{spread_lower_induc}
Let \eqref{eq:parabolic} be of the multistable type in the sense of Assumption~\ref{ass:multi}, and assume also that
$$c_1^{\ol p} (e) >0\quad\text{ for all }\;e \in S^{N-1}.$$
Let $u$ be a solution emerging from an initial datum $0 \leq u_0 \leq \overline{p}$
for which~\eqref{spread_start} holds. 
Then $u$ satisfies, for any linearly stable steady state $0 < p\leq \ol p$,
\begin{equation*}
\forall \eps \in(0,1), \qquad 
\liminf_{t \to +\infty}
\bigg( \inf_{x  \in  (1- \varepsilon)t   \underline{W}_p  } \big(u(t,x) -  p(x)\big)\bigg) \geq 0,
\end{equation*}
where $\ul W_p$ is equivalently given by the recursion~\eqref{W>pbar}-\eqref{W>recursive} or by~\eqref{W>}.
\end{prop}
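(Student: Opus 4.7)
The plan is to proceed by finite descending induction on the linearly stable, positive, periodic steady states, following the recursive definition~\eqref{W>pbar}--\eqref{W>recursive} of $\ul W_p$. The base case $p=\ol p$ is immediate: $\ul W_{\ol p}=\Upsilon_{\ol p}=W_{c_1^{\ol p}}$, so the statement reduces to Proposition~\ref{spread_lower_first}. For the inductive step, I fix a linearly stable $0<p<\ol p$ and assume the conclusion for every linearly stable $p'>p$. By~\eqref{W>recursive}, it suffices to prove the liminf inequality on each Minkowski sum $\kappa\ul W_{p'}+(1-\kappa)\Upsilon_p$ with $p'>p$ stable and $\kappa\in[0,1]$; the uniform statement on the compact set $\ul W_p$ then follows via a finite subcover and continuity argument (using Proposition~\ref{pro:finitep} to get finitely many stable states).

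The core of the argument is a two-stage ``relay'' spreading scheme. Given a target $x_t=(1-\varepsilon)t\zeta$ with $\zeta=\kappa y+(1-\kappa)z$, $y\in\ul W_{p'}$, $z\in\Upsilon_p$, set $s=\kappa t$ and $x_1=(1-\varepsilon)s y$. In Stage~1, the induction hypothesis at time~$s$ gives $u(s,\cdot)\geq p'(\cdot)-o(1)$ on $(1-\varepsilon)s\ul W_{p'}$; since $\min_{\R^N}(p'-p)>0$ by the elliptic strong maximum principle together with periodicity, and since I may take $y$ in the interior of $\ul W_{p'}$ so that a ball $B_R(x_1)$ of fixed radius $R$ sits inside $(1-\varepsilon)s\ul W_{p'}$ for all $s$ large, this yields $u(s,\cdot)\geq p(\cdot)+\delta$ on $B_R(x_1)$ for some fixed $\delta>0$. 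In Stage~2, I use the fact that equation~\eqref{eq:parabolic} viewed with $p$ as its upper extremal state still satisfies Assumption~\ref{ass:multi}, because every periodic steady state lying between $0$ and $p$ also lies between $0$ and $\ol p$. When $\Upsilon_p$ is non-degenerate, i.e.~$c_1^p(e)>0$ for all $e\in\Sph$, Lemma~\ref{lem:spread_ini} applied to this sub-equation produces, for every $\varepsilon'\in(0,\varepsilon)$, a compactly supported initial datum $v_0^{\varepsilon'}$ with $0\leq v_0^{\varepsilon'}<p$ whose associated solution $v^{\varepsilon'}$ satisfies $v^{\varepsilon'}(\tau,\cdot)\to p(\cdot)$ uniformly on $(1-\varepsilon')\tau W_{c_1^p}$ as $\tau\to\infty$.

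The two stages are glued together using spatial periodicity. I pick $h_s\in\mathbb{Z}^N$ with $|x_1-h_s|$ bounded; then $p(\cdot-h_s)\equiv p(\cdot)$, and once $R$ exceeds the support radius of $v_0^{\varepsilon'}$ and $s$ is large enough, the inequalities $u(s,\cdot)\geq p+\delta$ on $B_R(x_1)$ and $v_0^{\varepsilon'}<p$ with compact support yield $v_0^{\varepsilon'}(\cdot-h_s)\leq u(s,\cdot)$ on all of $\R^N$. The parabolic comparison principle then gives $u(t,\cdot)\geq v^{\varepsilon'}(t-s,\cdot-h_s)$ for every $t\geq s$. Since $z\in W_{c_1^p}$, $|x_1-h_s|$ is bounded, and $W_{c_1^p}$ contains a ball around the origin (as $c_1^p>0$), the point $x_t-h_s$ lies in $(1-\varepsilon')(t-s)W_{c_1^p}$ for $t$ large, so $v^{\varepsilon'}(t-s,x_t-h_s)\to p(x_t-h_s)=p(x_t)$, producing the desired lower bound. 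The boundary case $\kappa=0$ is handled by replacing Stage~1 with the hypothesis~\eqref{spread_start}, which ensures $u(T_0,\cdot)\geq\ol p-o(1)\geq p+\delta$ on a large ball around the origin for some fixed $T_0$. The degenerate case $\Upsilon_p=\{0\}$ collapses the Minkowski sum to $\kappa\ul W_{p'}$, and one may then apply the same two-stage scheme with $p'$ (or, recursively, a higher stable state whose $\Upsilon$ is non-degenerate, down the chain to $\ol p$ for which this is guaranteed) in place of $p$, using $p'>p$ to recover the bound by $p$.

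The main obstacle will be matching the two spreading scales carefully: Stage~1 produces a lower bound on a region that scales linearly with $s=\kappa t$, while Stage~2 only needs a fixed-radius starting ball but constrains the target point to lie within the expanding Wulff shape $(1-\varepsilon')(t-s)W_{c_1^p}$. Selecting interior points of $\ul W_{p'}$ so that balls of the required fixed radius fit, using integer translations to preserve the periodic state $p$, and upgrading the pointwise estimates to the uniform liminf on $\ul W_p$ through compactness and continuity in $\kappa\in[0,1]$ and in the finite set of stable states $p'>p$ are the delicate technical ingredients. The underlying geometric mechanism, however, is the transparent picture encoded by the recursion defining $\ul W_p$: an intermediate stable state $p'$ serves as a new ``platform'' reached during time $s=\kappa t$, from which the lower state $p$ proceeds to invade at the uppermost speed $c_1^p(e)$ of its own terrace during the remaining $(1-\kappa)t$ units of time.
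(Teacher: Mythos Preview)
Your proposal is correct and follows the same two-stage ``relay'' scheme as the paper: induction on the stable states, decomposing each target point as a convex combination of a point in $\ul W_{p'}$ (reached via the induction hypothesis during the first $\kappa t$ units of time) and a point in $\Upsilon_p$ (reached via Lemma~\ref{lem:spread_ini} applied to the sub-equation with $p$ as top state during the remaining $(1-\kappa)t$), glued by an integer shift and the comparison principle, then assembled over $(1-\eps)\ul W_p$ by a finite cover. The only organisational difference is that the paper routes this through an intermediate Lemma~\ref{lem:W>}---first constructing, for each $\eps$, a compactly supported datum whose solution achieves the lower bound, then comparing the given $u$ with it via~\eqref{spread_start}---whereas you apply the induction hypothesis directly to $u$; your version is slightly more streamlined but otherwise identical in substance.
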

Very roughly, Proposition~\ref{spread_lower_induc} states that a lower estimate of the shape of the set where
the solution becomes larger than or equal to~$p$ is obtained by a combination of, on the one hand, the uppermost Wulff shape of the terrace connecting~$p$ to~$0$, and on the other hand, the spreading shapes over all higher steady states.

%
Proposition~\ref{spread_lower_induc} will be proved by induction. Beforehand, we point out that Lemma~\ref{lem:spread_ini} can be extended in the following way.
\begin{lem}\label{lem:spread_ini_bis}
Under the assumptions of Proposition~\ref{spread_lower_induc}, for any linearly stable, periodic steady state $0< p \leq \ol p$ which satisfies \eqref{hyp:lower0}, and any $\eps\in(0,1)$,
there exists a solution $u$ with
a compactly supported, continuous initial datum $0 \leq u_0 < p$ 
such~that
$$
\lim_{t \to +\infty} 
\bigg(\sup_{x \in (1-\varepsilon) t \Upsilon_{p}} \big| u(t,x) -  p (x)\big|\bigg) = 0,
$$
where $\Upsilon_p = W_{c_1^p}$ is given by Definition~\ref{defi:upsw}.
\end{lem}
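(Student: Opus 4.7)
The plan is to deduce the lemma from Lemma~\ref{lem:spread_ini} applied to the restriction of equation~\eqref{eq:parabolic} viewed as a multistable problem with $p$ playing the role of the upper extremal state~$\ol p$. Since $p$ is itself a stationary solution, the parabolic comparison principle ensures that any solution emerging from an initial datum $0 \leq u_0 \leq p$ automatically remains in $[0,p]$, so the restricted and the original problem produce the same solutions for such initial data.

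To justify this reduction, one verifies first that the restricted problem satisfies Assumption~\ref{ass:multi} with $p$ in place of $\ol p$. Indeed, the upper state $p$ is linearly stable by hypothesis and the lower state~$0$ is linearly stable by the original Assumption~\ref{ass:multi}; any other periodic steady state between~$0$ and~$p$ is a fortiori a periodic steady state of the original problem between~$0$ and~$\ol p$, hence either linearly stable or linearly unstable. Next, the counterpart of condition~\eqref{c1>0} in the restricted setting is precisely~\eqref{hyp:lower0}: by Definition~\ref{def:Wp}, $c_1^p(e)$ is the speed of the uppermost front of a terrace $\mc{T}_p^e$ connecting~$p$ to~$0$, and since the speeds in a terrace are nondecreasing from top to bottom, the positivity of $c_1^p(e)$ for every $e \in \Sph$ forces all speeds of $\mc{T}_p^e$ to be positive; Theorem~\ref{th:uniqueness} then guarantees that $\mc{T}_p^e$ is unique up to shifts, so the restricted problem is set up exactly as required by Lemma~\ref{lem:spread_ini}.

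Applying Lemma~\ref{lem:spread_ini} thus produces, for the given $\eps \in (0,1)$, a solution $u$ with a compactly supported continuous initial datum $0 \leq u_0 < p$ satisfying
$$\lim_{t \to +\infty} \bigg( \sup_{x \in (1-\eps) t W_{c_1^p}} \big|u(t,x) - p(x)\big|\bigg) = 0.$$
Since~\eqref{hyp:lower0} places us in the first case of~\eqref{eq:upsilon_def}, we have $\Upsilon_p = W_{c_1^p}$, which is the desired conclusion. The only mild obstacle is to confirm that the internal ingredients of the proof of Lemma~\ref{lem:spread_ini}---namely the strict subsolution built from the principal eigenfunction of the linearisation around~$p$ via Lemma~\ref{lem:perturb0}, the geometric smoothing of $W_{c_1^p}$ into a star-shaped smooth set, and the blow-up and contradiction argument culminating in an invocation of Theorem~\ref{th:planar_speeds} on a planar-like limit---transfer verbatim when $\ol p$ is replaced by $p$ throughout. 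Each of these tools depends only on Assumption~\ref{ass:multi} and on the positivity of the uppermost speed, both of which hold for the restricted problem.
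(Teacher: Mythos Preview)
Your proof is correct and follows the same approach as the paper: reduce to Lemma~\ref{lem:spread_ini} by restricting the equation to solutions between~$0$ and~$p$, noting that Assumption~\ref{ass:multi} is inherited with~$p$ in place of~$\ol p$ and that~\eqref{hyp:lower0} plays the role of~\eqref{c1>0}. The paper's own justification is a one-line remark to this effect, while you have spelled out the verification in more detail.
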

Indeed, recall that the restriction of \eqref{eq:parabolic} to functions between~$0$ and~$p$ is still of the multistable type, in the sense that Assumption~\ref{ass:multi} holds true if one replaces~$\overline{p}$ with~$p$. 
Then, owing to Lemma~\ref{lem:lsc}, 
Lemma~\ref{lem:spread_ini_bis} is merely a restatement of Lemma~\ref{lem:spread_ini}.

Anyway, from this we will infer the following result.
\begin{lem}\label{lem:W>}
Under the assumptions of Proposition~\ref{spread_lower_induc}, for any
	linearly stable, periodic steady state $0<p\leq\ol p$ and any $\eps\in(0,1)$,
	there exists a solution $u$ with
	a compactly supported, continuous initial datum $0 \leq u_0 < \overline{p}$ 
	such~that
	\Fi{u>p}
	\liminf_{t \to +\infty} 
	\bigg(\inf_{x \in (1-\varepsilon) t\, \ul W_p} \big( u(t,x) - p(x)\big)\bigg) \geq 0.
	\Ff
\end{lem}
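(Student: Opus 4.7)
The plan is to argue by induction on the finite (Proposition~\ref{pro:finitep}) partially ordered set $E^+$ of positive, linearly stable, periodic steady states, moving downward from $\ol p$. The base case $p = \ol p$ reduces to Lemma~\ref{lem:spread_ini_bis} applied with $p = \ol p$, since $\ul W_{\ol p} = \Upsilon_{\ol p}$ and~\eqref{hyp:lower0} for $\ol p$ is part of the assumptions of Proposition~\ref{spread_lower_induc}. For the inductive step, fix $0 < p < \ol p$ and assume the result for every stable $p' > p$. Since by~\eqref{W>recursive} the set $\ul W_p$ is the union, over such $p'$ and over $\kappa \in [0,1]$, of the pieces $P_{p',\kappa} := \kappa \ul W_{p'} + (1-\kappa)\Upsilon_p$, I would first establish~\eqref{u>p} on each piece by a two-stage argument and then glue the pieces together.

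For Stage~1 (on the time interval $[0,\kappa t]$), the inductive hypothesis for $p'$ produces, for any $\eps_1 \in (0,\eps)$, a solution $\t u^{(p')}$ with compactly supported continuous initial datum $\t u_0^{(p')} < \ol p$ satisfying $\t u^{(p')}(s,y) \geq p'(y) - o_s(1)$ uniformly on $y \in (1-\eps_1)\,s\,\ul W_{p'}$ as $s \to +\infty$. For Stage~2 (on $[\kappa t, t]$), if $\Upsilon_p = \{0\}$ the piece reduces to $\kappa \ul W_{p'}$ and Stage~1 alone concludes, using that $p < p'$ pointwise strictly (strong maximum principle applied to ordered stable states). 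Otherwise $\Upsilon_p = W_{c_1^p}$ fulfils~\eqref{hyp:lower0} by Lemma~\ref{lem:lsc}, so Lemma~\ref{lem:spread_ini_bis} applied to $p$ yields a compactly supported $v_0 < p$ whose solution $v$ converges to $p$ uniformly on $(1-\eps_2)\tau\,\Upsilon_p$ as $\tau \to +\infty$. For any $y \in \Z^N$ with $y + \supp v_0 \subset (1-\eps_1)\kappa t\,\ul W_{p'}$, Stage~1 together with the $\Z^N$-periodicity of $p'$ gives $\t u^{(p')}(\kappa t,\, y + x) \geq p'(x) - \eta$; choosing $\eta < \min_{\supp v_0}(p' - v_0)$ (positive by continuity and strict ordering) then yields $\t u^{(p')}(\kappa t,\, y + x) \geq v_0(x)$ everywhere, and parabolic comparison produces
\[
\t u^{(p')}(\kappa t + \tau,\, y + x) \geq v(\tau,x) \geq p(y+x) - o_\tau(1) \qquad\text{for } x \in (1-\eps_2)\tau\,\Upsilon_p.
\]
Setting $\tau = (1-\kappa)t$ and letting $y$ range over the integer points inside $(1-\eps_1)\kappa t\,\ul W_{p'}$ with the required margin, the reachable set of $z = y + x$ eventually covers $(1-\eps)t\,P_{p',\kappa}$ for $t$ large and $\eps_1,\eps_2$ small enough, in view of the elementary inclusion $(1-\eps_1)A + (1-\eps_2)B \supset (1-\max(\eps_1,\eps_2))(A+B)$ valid when the compact sets $A,B$ contain the origin.

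To assemble a single solution valid on all of $\ul W_p$, I would use that $E^+_{>p}$ is finite and that $\kappa \mapsto P_{p',\kappa}$ is continuous in Hausdorff distance. A standard compactness argument then provides a finite family $\{(p'_i,\kappa_i)\}$ such that $(1-\eps/2)\,\ul W_p \subset \bigcup_i (1-\eps)\,P_{p'_i,\kappa_i}$; each of the associated initial data $\t u_0^{(p'_i,\kappa_i)}$ is compactly supported with $\t u_0^{(p'_i,\kappa_i)} < \ol p$, so they can all be dominated simultaneously by a single $u_0 := (\ol p - \eta_0\vp_0)\chi_R$, where $\chi_R$ is a smooth cutoff of a large ball $B_R$ and $\eta_0 > 0$ is small. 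This $u_0$ is compactly supported and continuous with $u_0 < \ol p$; parabolic comparison transfers the conclusion to the associated solution $u$, yielding~\eqref{u>p} after relabelling $\eps$.

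The main technical obstacle is precisely this combination step: in Stage~2 the $o(\cdot)$ remainders are not uniform in $\kappa$, because when $\kappa$ approaches $0$ or $1$ one of the two stages has asymptotically no time to operate. This forces the discretisation argument above rather than the use of a single ``canonical'' solution per direction. The boundary cases $\kappa \in \{0,1\}$ reduce respectively to Lemma~\ref{lem:spread_ini_bis} applied to $p$ itself (giving $\Upsilon_p$) and to the inductive hypothesis applied to $p'$ combined with the strict ordering $p < p'$ (giving $\ul W_{p'}$), and are therefore absorbed into the same scheme.
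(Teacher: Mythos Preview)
Your proposal is correct and follows essentially the same approach as the paper: induction on the stable states, a two-stage time-splitting argument (inductive hypothesis on $[0,\kappa t]$, then Lemma~\ref{lem:spread_ini_bis} launched from an integer translate on $[\kappa t,t]$), and a compactness-plus-maximum argument to assemble a single initial datum. The only cosmetic differences are that the paper works pointwise (for each $\zeta\in(1-\eps)\ul W_p$ it produces a solution good on a small ball $B_{\delta_\zeta}(\zeta)$, then covers by finitely many balls and takes $u_0=\max_\zeta u_0^\zeta$), whereas you work piecewise on the $P_{p',\kappa}$ and discretise in $\kappa$ via Hausdorff continuity; both handle the non-uniformity in $\kappa$ near $0$ and $1$ the same way, by falling back respectively on Lemma~\ref{lem:spread_ini_bis} for $\Upsilon_p$ and on the inductive hypothesis for $\ul W_{p'}$.
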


Notice that Proposition~\ref{spread_lower_induc} follows from Lemma~\ref{lem:W>}, in a similar way that Proposition~\ref{spread_lower_first} followed from Lemma~\ref{lem:spread_ini} in the previous section. Thus it only remains to show that Lemma~\ref{lem:W>} holds true.
\begin{proof}[Proof of Lemma~\ref{lem:W>}]
	In the case $p=\ol p$, the result is contained in either Lemma~\ref{lem:spread_ini} or Lemma~\ref{lem:spread_ini_bis}. Indeed, observe that~\eqref{W>pbar} and the positivity of $c_1^{\ol p} (e)$ for all $e \in S^{N-1}$, yield~$\ul W_{\overline{p}}= \Upsilon_{\overline{p}} = W_{c_1^{\ol p}}$. We now prove property~\eqref{u>p}
	for any intermediate stable state $0<p<\ol p$ by an iterative argument,
	assuming that it holds for all~$p'>p$.

	First notice that all the sets $\Upsilon_p$ are star-shaped with respect to the origin,
	hence by~\eqref{W>} the same is true for $ \underline{W}_p$.
	It follows from \eqref{W>recursive} that, when $\Upsilon_p = \{ 0\}$, then
		$$ \underline{W}_p =  \bigcup_{\substack{p' \in E^+ \\ p<p'\leq\ol p}}
\ul W_{p'},
$$ 
and the conclusion immediately follows from the inductive hypothesis. So we only consider the other case, 
when \eqref{hyp:lower0} holds and $\Upsilon_p \neq \{0\}$.
	Let $\eps\in(0,1/2)$ and consider a point
	$\zeta\in (1-\varepsilon)\ul W_p$. Owing to~\eqref{W>recursive}, there exist $\kappa \in [0,1]$ and some $p < p' < \ol p$ such that 
	$$\zeta \in (1 - \eps) \left[ \kappa \ul W_{p'} + (1-\kappa) \Upsilon_p \right].$$
	We distinguish three different situations, according to whether $(1-\varepsilon)^{-1} \zeta$ belongs to $\Upsilon_p$, or $\ul W_{p'}$, or none of them.	
	
	\smallskip
{\em Case $(1 -\varepsilon)^{-1} \zeta \in \Upsilon_p $.}\\
Recall also that $\Upsilon_p \neq \{ 0\}$, so that according to~\eqref{eq:upsilon_def} and~\eqref{hyp:lower0}, for any $\lambda>0$, 
		$\lambda\Upsilon_p$ is a convex set containing a neighbourhood of the origin. 	
	One deduces from this that $\zeta$ is in the interior of $(1-\varepsilon/2)\Upsilon_p$,
	hence we can find
	$\delta>0$ such that $B_\delta(\zeta)\subset (1-\eps/2) \Upsilon_p$.
	By Lemma~\ref{lem:spread_ini_bis} there exists a 
	solution~$u^p$ with a compactly supported, continuous initial datum $0 \leq u_0^p < p$ such that
	$$\lim_{t \to +\infty} 
	\bigg(\sup_{x \in (1-\varepsilon/2)t \Upsilon_p} \big| u^p(t,x) - p (x)\big|\bigg) = 0.$$
	We thus derive
	\Fi{uptop}
	\lim_{t \to +\infty} 
	\bigg(\sup_{x \in t B_\delta(\zeta)} \big|u^p(t,x) - p (x)\big|\bigg) = 0.
	\Ff

	\smallskip
{\em Case $(1-\varepsilon)^{-1} \zeta \in   \ul W_{p'} \setminus \Upsilon_p $.}\\
Since the sets $\Upsilon_{p_i}$ are star-shaped with respect to the origin, 
it follows from~\eqref{W>} that the same is true for $\ul W_{p'}$. Moreover, in the present case, $\zeta \neq 0 \in (1 - \varepsilon) \Upsilon_p$, which implies that $\ul W_{p'}\neq\{0\}$. Thus, $\ul W_{p'}$ contains at least one set $\Upsilon_{p_i}\neq\{0\}$ in~\eqref{W>}. As a consequence, $\ul W_{p'}$ contains a neighbourhood of
the origin and therefore, as in the previous case, one can find a quantity
$\delta'>0$ such that \begin{equation*}
B_{\delta'}(\zeta)\subset (1-\eps/2)\ul W_{p'}.
\end{equation*}
By the inductive hypothesis, there exists a solution
	$u^{p'}$ with a compactly supported, continuous initial datum $0 \leq u_0^{p'} < \overline{p}$
	for which~\eqref{u>p} holds with $p$, $\ul W_p$ and $\eps$
	replaced by $p'$, $\ul W_{p'}$ and $\eps/2$ respectively.
	Then there exists $T'>0$ such~that
	\Fi{up'>0}
	\forall t\geq T',\ \forall x\in t 
	B_{\delta'}(\zeta)\subset (1-\varepsilon/2)  t\, \ul W_{p'},\quad
	u^{p'}(t,x)>p(x).
	\Ff
	
	\smallskip
{\em Case $(1-\varepsilon)^{-1} \zeta \in  \ul W_p \setminus ( \Upsilon_p \cup \ul W_{p'} )$.}\\
	The strategy is to decompose the time interval $(0,t)$
		into two parts where a solution spreads respectively over the shapes $\Upsilon_p$ and $\underline{W}_{p'}$.

			First, in this case there exist $\kappa \in (0,1)$, $z'\in \ul W_{p'} \setminus \Upsilon_p$ and $z\in  \Upsilon_p$ such that
			$$\zeta=(1-\eps)\kappa z' +(1-\eps)(1-\kappa)z.$$ 
		Consider the same function $u^{p'}$ as in the previous case. 
	We have shown that it fulfils~\eqref{up'>0} with $\zeta$ replaced by $(1-\eps)z'$, that we rewrite as follows:
	\Fi{up'>}
	\forall t\geq \frac {T'}\kappa,\ \forall x\in \kappa tB_{\delta'}\big((1-\eps)z'\big),\quad
	u^{p'}(\kappa t,x)>p(x).
	\Ff
	Consider now the function 
	$u^{p}$ of the first case. Take $\tau>0$ large enough so that 
	$$\supp u_0^p\subset B_{\delta'\kappa\tau -\sqrt{N}} (0).$$
	For $t\geq 0$, let $h_t\in\mathbb{Z}^N$ be such that 
	\Fi{ht}
	|h_t-(1-\eps)\kappa t z'|\leq\sqrt{N}.
	\Ff
	We deduce that, for $t\geq\tau$, there holds that  
	$$\forall x\notin\kappa tB_{\delta'}\big((1-\eps)z'\big),\quad
	u_0^p(x-h_t)=0,$$
	and therefore, if in addition $t\geq T'/\kappa$, then by \eqref{up'>} we have $u^{p'}(\kappa t,x)>u_0^p(x-h_t)$
	for all $x\in\R^N$.
	Thus, for $t\geq\max\{\tau, T'/\kappa\}$,
	we can apply the parabolic comparison principle to the functions
	$u^{p'}(\.+\kappa t,\.)$, $u^p(\.,\.-h_t)$, both satisfying~\eqref{eq:parabolic},
	and infer that
	$$\forall s>0,\ \forall x\in\R^N,\quad
	u^{p'}(s + \kappa t ,x)>u^p(s,x-h_t) .$$
	Hence, taking $s=(1-\kappa)t$, we obtain
	$u^{p'}(t,x+h_t)>u^p((1-\kappa)t,x)$ for all $x\in\R^N$.
	We now recall that for $u^p$, the convergence~\eqref{uptop} holds when~$\zeta=(1-\eps)z$.
	We deduce that
	$$\liminf_{t\to+\infty}\bigg(\inf_{x \in (1-\kappa)t B_\delta((1-\eps)z)} 
	\big(u^{p'}(t,x+h_t) - p (x)\big)\bigg) \geq 0.$$
	Recalling that $h_t$ satisfies~\eqref{ht}, we see that
	\begin{align*}
		(1-\kappa)t B_\delta((1-\eps)z)+\{h_t\}
	&=t B_{(1-\kappa)\delta}\Big((1-\eps)(1-\kappa)z+\frac{h_t}t\Big)\\
	&\supset t B_{(1-\kappa)\delta-\frac{\sqrt N}t}(\zeta),
	\end{align*}
	and therefore we conclude 
	\begin{equation}\label{eq:third_case}
	\liminf_{t\to+\infty}\bigg(\inf_{x \in t B_{(1-\kappa)\delta/2}(\zeta)} 
	\big(u^{p'}(t,x) - p (x)\big)\bigg) \geq 0.
	\end{equation}
	
	\smallskip
	{\em Conclusion}.\\
	According to either~\eqref{uptop}, \eqref{up'>0} or~\eqref{eq:third_case}, in each case we have found, for any $\zeta\in (1-\varepsilon)\ul W_p$, a solution $u^\zeta$ with 
	a compactly supported, continuous initial datum $0 \leq u_0^{\zeta} < \overline{p}$, and some $\delta_\zeta>0$
	such~that
	$$\liminf_{t\to+\infty}\bigg(\inf_{x \in t B_{\delta_\zeta}(\zeta)} 
	\big(u^\zeta(t,x) - p (x)\big)\bigg) \geq 0.$$
	Since $(1-\varepsilon)\ul W_p$ is compact, we can cover it by a finite number of 
	balls $B_{\delta_\zeta}(\zeta)$, $\zeta\in Z$ ($Z$ is a finite set). 
	Then we consider as an initial datum the function 
	$$u_0(x):=\max_{\zeta\in Z}u^\zeta(0,x),$$	
	which is continuous, compactly supported and satisfies $0 \leq u_0 < \ol p$.
	The solution with this initial datum satisfies the desired
	property~\eqref{u>p}, by the comparison principle.
\end{proof}

\subsection{Conclusion under the regularity Assumption~\ref{ass:tangentialC1} on the Wulff shapes}\label{sec:Wregular}

In the previous sections we have derived general upper and lower bounds,
Propositions~\ref{spread_upper} and~\ref{spread_lower_induc} respectively. 
They are expressed in terms of two different sets:  
the Wulff shape $W_{c[p]}$ of the speeds~$c[p]$ given by Definition~\ref{def:Wp},
and the set~$\underline{W}_{p}$, respectively, the latter being in turn 
defined recursively using the sets $W_{c_1^{p'}}$ and~$\Upsilon_{p'}$ (see Definition~\ref{defi:upsw}).
The role of Assumption~\ref{ass:tangentialC1} is to grant that 
these two bounds coincide and thus yield the sharp estimates of Theorem~\ref{th:spread_cpct11}.

The first step is to use the regularity of the Wulff shapes in Assumption~\ref{ass:tangentialC1} to relate the travelling front speeds with the supporting hyperplanes of the sets $W_{c_1^{p_k}}$. 
We recall that a supporting hyperplane for a closed, convex set is an hyperplane that intersects the set only at some boundary
points.
\begin{prop}\label{prop:wulff_reg}
	Let $e \in S^{N-1} \mapsto c(e)$ be a positive and lower semi-continuous function, and let~$W_c$ be the corresponding Wulff shape, i.e.
	$$W_c  = \bigcap_{e \in S^{N-1}} \{ x\in\R^N\ |\ x \cdot e \leq c (e) \}. $$
	If~$\partial W_c $ is a $C^1$-hypersurface, then for any $e \in S^{N-1}$, the hyperplane 
	$\{x\in\R^N\ |\  x \cdot e = c(e) \}$ is a supporting hyperplane of $W_c$, that is,
	$$\partial W_c  \cap \{ x \in \R^N \ | \ x \cdot e =c ( e) \} \neq \emptyset .$$
\end{prop}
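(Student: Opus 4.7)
The plan is to prove the stronger identity $h_{W_c}(e) = c(e)$ for every $e \in S^{N-1}$, where $h_{W_c}(e) := \sup_{y \in W_c} y \cdot e$ is the support function of $W_c$. Once this is established, for any given $e \in S^{N-1}$, a maximiser $x$ of $y \mapsto y \cdot e$ on $W_c$ automatically lies in $\partial W_c$ (otherwise $x + \varepsilon e$ would still be in $W_c$ for small $\varepsilon > 0$, contradicting maximality), and the identity $x \cdot e = c(e)$ places $x$ in the hyperplane, giving the desired non-empty intersection.

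I will first check that $W_c$ is compact, via a short recession-cone argument: since $c$ is positive on $S^{N-1}$, the origin lies in the interior of $W_c$, and any non-zero $v$ in the recession cone of $W_c$ would force $tv \cdot e \leq c(e)$ for all $t \geq 0$ and every $e \in S^{N-1}$, hence $v \cdot e \leq 0$ for every $e$, which is impossible (take $e = v/|v|$). The core step is then to show that every boundary point $x \in \partial W_c$ admits an active constraint, i.e.\ some $e^* \in S^{N-1}$ with $x \cdot e^* = c(e^*)$. Since $x$ is not interior to $W_c$, I pick a sequence $y_n \to x$ with $y_n \notin W_c$, hence $y_n \cdot e_n > c(e_n)$ for some $e_n \in S^{N-1}$; extracting $e_n \to e^* \in S^{N-1}$ and using the lower semi-continuity of $c$ gives
\[
c(e^*) \leq \liminf_{n \to \infty} c(e_n) \leq \liminf_{n \to \infty} y_n \cdot e_n = x \cdot e^*,
\]
while $x \in W_c$ yields the opposite inequality, whence $x \cdot e^* = c(e^*)$.

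With this active direction in hand, the $C^1$ hypothesis forces $e^* = \nu(x)$, the outward unit normal at $x$: the hyperplane $\{y : y \cdot e^* = c(e^*)\}$ passes through $x$ and has $W_c$ entirely on one side, hence is a supporting hyperplane, and by $C^1$ regularity it must coincide with the unique supporting hyperplane at $x$, whose normal is $\nu(x)$. Therefore $c(\nu(x)) = x \cdot \nu(x) = h_{W_c}(\nu(x))$ at every boundary point. To finish, I use surjectivity of the Gauss map $\nu : \partial W_c \to S^{N-1}$: any $e \in S^{N-1}$ is realised as an outward normal at a maximiser of $y \mapsto y \cdot e$ on the compact set $W_c$, so $\nu(x) = e$ by $C^1$ uniqueness, and combining yields $c(e) = h_{W_c}(e)$ everywhere. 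The main obstacle is producing the active direction $e^*$ at a boundary point; this step hinges delicately on lower semi-continuity of $c$ in order to pass to the limit in the strict inequality $y_n \cdot e_n > c(e_n)$. The remainder is essentially standard convex geometry.
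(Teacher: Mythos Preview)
Your proposal is correct and follows essentially the same approach as the paper's proof: both hinge on the identical key step of taking a sequence $y_n \notin W_c$ converging to a boundary point, extracting a violated direction $e_n \to e^*$, and using lower semi-continuity of $c$ to conclude that $e^*$ is an active constraint, which by $C^1$ uniqueness of the supporting hyperplane must coincide with the outward normal. The paper organises this slightly more directly---starting from the given $e$, picking a supporting point $\bar x$, and showing the active direction there equals $e$---whereas you first establish the identity $c(\nu(x)) = h_{W_c}(\nu(x))$ at every boundary point and then invoke surjectivity of the Gauss map; but these are minor rearrangements of the same argument.
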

As a matter of fact, in the proof of Theorem \ref{th:spread_cpct11}
we will not use the $C^1$ regularity of the Wulff shapes stated in Assumption \ref{ass:tangentialC1},
but only its consequence given by 
Proposition \ref{prop:wulff_reg}\,:
for any $k\in\{1,\dots,M\}$ such that the function $c_1^{p_{k-1}}$ is strictly positive (thus lower semi-continuous by Lemma~\ref{lem:lsc}),   
it holds that
\Fi{supphyp}
\forall e\in\Sph,\quad \
\{ x \in \R^N \ | \ x \cdot e  = c_1^{p_{k-1}}(e) \}\quad
\text{is a supporting hyperplane of }
W_{c_1^{p_{k-1}}}.
\Ff
We made the choice to state the stronger condition in Assumption \ref{ass:tangentialC1} only for the sake
of simplicity of presentation. 
We also point out that our counter-example in Section~\ref{sec:assumptions} works as such either way.

\begin{proof}[Proof of Proposition~\ref{prop:wulff_reg}]
	Being the set $W_c$ compact and convex, there exists $L_e\in\R$ such that $\{x\in\R^N\ |\ x \cdot e = L_e \}$ 
	is a supporting hyperplane to~$W_c$ at some point $\bar x\in \partial W_c$. 
	Let $(x_n)_{n\in\N}\subset\R^N\setminus W_c$ be a sequence
	converging to $\bar x$. By the definition of $W_c$, there exists $(e_n)_{n\in\N}$ in $\Sph$ such that 
	$x_n\. e_n>c(e_n)$. Letting $e'$ be the limit of (a subsequence of) $(e_n)_{n\in\N}$, the lower semi-continuity
	of~$c$ yields $\bar x\. e' \geq c(e')$. Observe that the reverse inequality also holds, because $\bar x\in W_c$.
	We conclude that $\bar x \cdot e' = c(e')$.
	This shows that $\{ x\in\R^N\ |\ x \cdot e'=c (e') \}$ is also a supporting 
	hyperplane to~$W_c$ at~$\bar x $. As a consequence, by the uniqueness of the tangential hyperplane to 
	the $C^1$-hypersurface $W_c$, we conclude that $e' = e$. This ends the proof.
\end{proof}

As explained above, our goal is to show that
\Fi{WpkWk}
\forall k \in \{1, \dots , M\},\quad
\underline{W}_{p_{k-1}} = W_{c[p_{k}]},
\Ff
where, by Assumption~\ref{ass:tangentialC1}, the $p_0, \cdots, p_M$ are all the (ordered) stable, periodic steady states. 
The proof relies on the following result.

\begin{prop}\label{prop:coincide}
	Under Assumption~\ref{ass:tangentialC1}, for any $k \in \{1, \dots, M\}$ and for any $e \in S^{N-1}$, 
	the hyperplane $\{ x \in \R^N \ | \ x \cdot e = c [p_{k}] (e) \}$ is a supporting hyperplane to $\underline{W}_{p_{k-1}}$.
\end{prop}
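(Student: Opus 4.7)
The plan is to prove the two defining conditions of a supporting hyperplane separately: (a) $\ul W_{p_{k-1}}\subseteq\{x\cdot e\le c[p_k](e)\}$, and (b) the hyperplane $\{x\cdot e=c[p_k](e)\}$ meets $\ul W_{p_{k-1}}$. Thanks to Lemma~\ref{lem:ulW}, under the total ordering in Assumption~\ref{ass:tangentialC1} one has $\ul W_{p_{k-1}}=\Conv\bigl(\bigcup_{m=0}^{k-1}\Upsilon_{p_m}\bigr)$, which reduces (a) to bounding $\max_{x\in \Upsilon_{p_m}}x\cdot e$ by $c[p_k](e)$ for every $m\le k-1$, and (b) to exhibiting a single index $m^*\le k-1$ for which that bound is attained. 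Throughout I write $\mathcal T^e$ with platforms $\ol p=p_{j_0}>p_{j_1}>\cdots>p_{j_L}=0$ and speeds $c_1(e)\le\cdots\le c_L(e)$ (all positive by Assumption~\ref{ass:tangentialC1}), and I denote by $\ell$ the unique index with $j_{\ell-1}<k\le j_\ell$, so that $c[p_k](e)=c_\ell(e)$.

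For (a), since $\max_{x\in W_{c_1^{p_m}}}x\cdot e\le c_1^{p_m}(e)$ by definition of the Wulff shape, and since $\Upsilon_{p_m}=\{0\}$ gives a trivial contribution, the core inequality to establish is $c_1^{p_m}(e)\le c[p_k](e)$ whenever $m\le k-1$ and $c_1^{p_m}$ is strictly positive. I would prove this by contradiction via the planar spreading theorem. Assuming $c_1^{p_m}(e)>c[p_k](e)$ and picking $c$ strictly between, I would compare two planar-like solutions in direction $e$, namely $v$ starting essentially from $p_m$ on the left and $0$ on the right, and $u$ starting from $\ol p$ on the left and $0$ on the right. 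Applied to the restricted multistable problem on $[0,p_m]$, Theorem~\ref{th:planar_speeds} yields $v(t,\cdot)\to p_m$ along $\{x\cdot e\simeq ct\}$, whereas applied to $\mathcal T^e$ it gives $u(t,\cdot)\le p_{j_\ell}+\eps$ on the same locus, with $p_{j_\ell}\le p_k<p_m$. The parabolic comparison $v\le u$ and the uniform gap $\min(p_m-p_{j_\ell})>0$ then produce a contradiction.

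For (b), the natural candidate is $m^*=j_{\ell-1}$. Because all speeds of $\mathcal T^e$ are positive, truncating $\mathcal T^e$ at $p_{j_{\ell-1}}$ yields a valid terrace from $p_{j_{\ell-1}}$ to $0$, which by Theorem~\ref{th:uniqueness} must coincide with $\mathcal T^e_{p_{j_{\ell-1}}}$; in particular $c_1^{p_{j_{\ell-1}}}(e)=c_\ell(e)=c[p_k](e)>0$. Granting that $c_1^{p_{j_{\ell-1}}}$ is strictly positive on the whole sphere, Assumption~\ref{ass:tangentialC1} together with property~\eqref{supphyp} furnishes a point $x^*\in \Upsilon_{p_{j_{\ell-1}}}=W_{c_1^{p_{j_{\ell-1}}}}$ with $x^*\cdot e=c_1^{p_{j_{\ell-1}}}(e)=c[p_k](e)$. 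Since $j_{\ell-1}\le k-1$, the explicit description~\eqref{w_p_conv} yields $\Upsilon_{p_{j_{\ell-1}}}\subseteq \ul W_{p_{k-1}}$, delivering the sought supporting point.

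The main obstacle is precisely the positivity of $c_1^{p_{j_{\ell-1}}}$ in \emph{all} directions $e'$, which is what is required to invoke~\eqref{supphyp}. In direction $e$ it is immediate. In directions $e'$ where $p_{j_{\ell-1}}$ is again a platform of $\mathcal T^{e'}$, the same truncation argument combined with the ordering of the speeds of a terrace and with $c_1^{\ol p}(e')>0$ gives positivity. In directions where $p_{j_{\ell-1}}$ is \emph{not} a platform of $\mathcal T^{e'}$, however, the terrace $\mathcal T^{e'}_{p_{j_{\ell-1}}}$ is genuinely distinct from the bottom part of $\mathcal T^{e'}$ and positivity of its uppermost speed is no longer automatic. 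This is the delicate situation that Assumption~\ref{ass:tangentialC1} is designed to control, and carefully exploiting the hypothesis on the $C^1$-regularity of the relevant Wulff shapes to either reduce this degenerate case to a lower-order index or exclude it outright will be the technical heart of the argument.
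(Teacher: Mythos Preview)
Your overall architecture is sound but differs from the paper's. You argue directly, using $\ul W_{p_{k-1}}=\Conv\bigl(\bigcup_{m\le k-1}\Upsilon_{p_m}\bigr)$ and looking for the index $m^*=j_{\ell-1}$ that realises the touching point. The paper instead proceeds by \emph{induction on $k$}: it writes $\ul W_{p_k}=\Conv(\ul W_{p_{k-1}}\cup\Upsilon_{p_k})$, so that any supporting hyperplane of $\ul W_{p_k}$ supports either $\ul W_{p_{k-1}}$ or $\Upsilon_{p_k}$, and then uses the inductive hypothesis together with~\eqref{supphyp} to obtain
\[
L_e=\max\{c[p_k](e),\,c_1^{p_k}(e)\}.
\]
The identification $L_e=c[p_{k+1}](e)$ then reduces to the two explicit relations in Lemma~\ref{lem_coincide}: if $p_k$ is a platform of $\mathcal T^e$ one has $c[p_{k+1}](e)=c_1^{p_k}(e)\ge c[p_k](e)$; if not, $c[p_{k+1}](e)=c[p_k](e)$ and the planar comparison (your part (a) argument, which matches the paper's) yields $c_1^{p_k}(e)\le c[p_k](e)$. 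This inductive bookkeeping is the main gain: one never has to locate $j_{\ell-1}$ explicitly, and the whole proof collapses to a single max identity at each step.

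That said, the obstacle you flag is genuine and you have not resolved it. You need $\Upsilon_{p_{j_{\ell-1}}}\neq\{0\}$, i.e.\ $c_1^{p_{j_{\ell-1}}}(e')>0$ for \emph{all} $e'$, and you correctly observe that in directions where $p_{j_{\ell-1}}$ is not a platform of $\mathcal T^{e'}$ this is not automatic from $c_1^{\ol p}>0$. Your proposal ends by deferring this to ``carefully exploiting the $C^1$-regularity'', without saying how. If you unroll the paper's induction you will see it relies on the very same non-degeneracy: in the inductive step at index $k$, when $p_k$ is a platform of $\mathcal T^e$, the displayed max formula tacitly uses $\Upsilon_{p_k}=W_{c_1^{p_k}}$ rather than $\{0\}$. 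So the paper does not supply an argument for this point either; it is silently assumed. In short, your part~(a) is complete and agrees with the paper; your part~(b) correctly isolates the only delicate issue, but neither you nor the paper's proof actually closes it, and the inductive organisation does not by itself remove the need for this positivity.
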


Let us postpone for a moment the proof of Proposition \ref{prop:coincide}, and show how it yields
Theorem~\ref{th:spread_cpct11}.	As we announced above, we will prove this proposition, and thus Theorem~\ref{th:spread_cpct11}, in the case where the~$C^1$ regularity of $W_{c_1^{p_k}}$ in Assumption \ref{ass:tangentialC1} 
is relaxed by \eqref{supphyp}.


	\begin{proof}[Proof of Theorem~\ref{th:spread_cpct11}]
		For any solution~$u$ with a compactly supported initial datum $0 \leq u_0 \leq \overline{p}$ 
for which~\eqref{spread_start} holds, Propositions~\ref{spread_upper} and~\ref{spread_lower_induc} provide us with the following estimates:
for any $\eps\in(0,1)$ and any $k \in \{1,\dots, M\}$, there holds
$$
 \limsup_{t \to +\infty}
\bigg( \sup_{x   \in\, \R^N\setminus   (1+\varepsilon)t  W_{c[p_k]}} \big(u(t,x) -  p_k (x)\big)\bigg) \leq 0,
$$
$$
\liminf_{t \to +\infty}
\bigg( \inf_{x  \in  (1- \varepsilon)t   \underline{W}_{p_{k-1}}  } \big(u(t,x) -  p_{k-1} (x)\big)\bigg) \geq 0.
$$
%
%
In order to deduce from 
the above estimates the desired ones, we need to prove \eqref{WpkWk}.


The inclusion
$$\underline{W}_{p_{k-1}} \subset W_{c[p_{k}]}$$
is already granted by the above estimates, 
since $p_{k-1}>p_{k}$.
Let us turn to the reverse inclusion.
Owing to Definition~\ref{defi:upsw} and the characterisation \eqref{w_p_conv} in Lemma~\ref{lem:ulW}, 
we know that the set $\underline{W}_{p_{k-1}}$ is compact and convex,
therefore it can be written as
\Fi{Wpk}
\underline{W}_{p_{k-1}}=\bigcap_{e \in \Sph}\{ x \in \R^N \ | \  x \cdot e \leq L_e \},
\Ff
for some quantities $(L_e)_{e\in\Sph}$. It then follows from Proposition \ref{prop:coincide}
that $L_e\geq c [p_{k}] (e)$ for all~$e \in \Sph$. This means that $\underline{W}_{p_{k-1}} \supset W_{ c [p_{k} ]}$.
\end{proof}

%
%

In conclusion, it only remains to prove Proposition~\ref{prop:coincide}.
For convenience, let us again recall, on the one hand, that by Definition~\ref{def:Wp},  $c [p_k] (e)$ is the speed of the unique travelling front contained in the terrace connecting~$\ol p$ to~$0$ in direction~$e$ whose profile fulfils~\eqref{eq:through} with $p =p_k$. On the other hand, from 
Definition~\ref{def:Wp} $(ii)$, $c_1^{p_k} (e)$ is the speed of the uppermost travelling front of the terrace connecting~$p_k$ to~$0$ in direction~$e$. It is then straightforward to check the following properties, that will come 
in handy in the proof of Proposition~\ref{prop:coincide}.
\begin{lem}\label{lem_coincide}
Under Assumption~\ref{ass:tangentialC1}, for any $k \in \{1,\dots,M-1\}$, it holds:
\begin{enumerate}[$(i)$]
\item $ c [p_k] (e) \leq c [p_{k+1}] (e)$;
\item if $p_k$ is a platform of the terrace connecting~$\ol p$ to~$0$ in direction~$e$, then 
$$c [p_{k+1}] (e) = c_1^{p_k} (e);$$
\item if $p_k$ is not a platform of the terrace connecting~$\ol p$ to~$0$ in direction~$e$, then 
$$c [p_{k+1}] (e) = c [p_k] (e).$$
\end{enumerate}
\end{lem}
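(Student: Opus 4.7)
The plan is to compute all three speeds explicitly in terms of the structure of the propagating terrace $\mathcal{T}^e = ((q_j)_{0 \leq j \leq K}, (U_j)_{1 \leq j \leq K})$ connecting $\ol p$ to $0$ in direction $e$, whose speeds I denote by $c_1 \leq \cdots \leq c_K$. First I would use Assumption~\ref{ass:tangentialC1}, namely the positivity of $c_1^{\ol p}(e) = c_1$, together with the monotonicity of $(c_j)$, to deduce that every $c_j$ is strictly positive, so that Theorem~\ref{th:uniqueness} guarantees $\mathcal{T}^e$ is unique up to shifts. By Proposition~\ref{pro:terrace_allstable} each platform $q_j$ is a linearly stable periodic steady state, and since Assumption~\ref{ass:tangentialC1} says these are totally ordered as $p_0 > p_1 > \cdots > p_M$, one may write $q_j = p_{\sigma(j)}$ for a strictly increasing map $\sigma : \{0,\dots,K\} \to \{0,\dots,M\}$ with $\sigma(0) = 0$ and $\sigma(K) = M$.

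Next I would identify $c[p_k](e)$ explicitly. By Definition~\ref{def:Wp}, it is the speed of the unique front $U_j$ of $\mathcal{T}^e$ whose profile satisfies $U_j(\cdot,+\infty) = q_j \leq p_k$ and $\max(q_{j-1} - p_k) > 0$. Since all platforms belong to the totally ordered family $(p_i)$, these two conditions are equivalent to $\sigma(j-1) < k \leq \sigma(j)$, and such $j$ is clearly unique; denoting it by $j(k)$, one has $c[p_k](e) = c_{j(k)}$. Part~$(i)$ is then immediate: since $j(k) \leq j(k+1)$ and $(c_j)$ is non-decreasing, $c[p_k](e) \leq c[p_{k+1}](e)$.

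For parts $(ii)$ and $(iii)$, I would analyse how $j(k)$ changes. If $p_k$ is not a platform of $\mathcal{T}^e$, then $\sigma(j(k)-1) < k < \sigma(j(k))$, so $\sigma(j(k)-1) < k+1 \leq \sigma(j(k))$, which forces $j(k+1) = j(k)$ and yields $(iii)$. If instead $p_k = q_j$ is a platform, then $\sigma(j(k)) = k$ forces $j(k+1) = j+1$, hence $c[p_{k+1}](e) = c_{j+1}$. To complete~$(ii)$ it then remains to show $c_1^{p_k}(e) = c_{j+1}$. For this I would exhibit the ``lower part'' of $\mathcal{T}^e$, namely $((q_i)_{j \leq i \leq K}, (U_i)_{j+1 \leq i \leq K})$, which is a terrace connecting $p_k$ to $0$ in direction $e$ with strictly positive speeds $c_{j+1} \leq \cdots \leq c_K$. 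Since Assumption~\ref{ass:multi} is preserved upon restricting the equation to $[0,p_k]$, Theorem~\ref{th:uniqueness} applies and this is the unique terrace $\mathcal{T}^e_{p_k}$ up to shifts; in particular $c_1^{p_k}(e)$ is well-defined and equals the speed of its uppermost front, namely $c_{j+1}$.

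There is no genuine obstacle here beyond careful bookkeeping; the only delicate point is to make sure that both $c[p_k](e)$ and $c_1^{p_k}(e)$ are unambiguously defined, which reduces to verifying that the relevant terraces have strictly positive speeds so as to fall within the uniqueness scope of Theorem~\ref{th:uniqueness}. This is precisely what the monotonicity of $(c_j)$ and the positivity of $c_1 = c_1^{\ol p}(e)$ under Assumption~\ref{ass:tangentialC1} provide.
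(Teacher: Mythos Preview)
Your proposal is correct and follows essentially the same approach as the paper: identify $c[p_k](e)$ as the speed of the unique front of $\mathcal{T}^e$ that ``straddles'' $p_k$, deduce $(i)$ from the monotonicity of the terrace speeds, obtain $(iii)$ by noting the same front straddles $p_{k+1}$ when $p_k$ is not a platform, and prove $(ii)$ by recognising the lower part of $\mathcal{T}^e$ as the (unique) terrace connecting $p_k$ to $0$. Your explicit bookkeeping via the map $\sigma$ and the index $j(k)$, and your care in invoking Theorem~\ref{th:uniqueness} to ensure well-definedness, make the argument slightly more formal than the paper's, but the ideas are identical.
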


Property~$(i)$ simply follows from the ordering of speeds in the definition of a terrace. 
Indeed, even though $p_k$ may not be one of the platforms of the terrace $\mathcal{T}^e$
connecting~$\ol p$ to~$0$ in direction~$e$, the quantity $ c [p_k] (e)$
coincides with the speed of one of its fronts, and $c [p_{k+1}] (e)$ with the speed of the same front or of a lower one,
hence $c [p_k] (e)\leq c [p_{k+1}] (e)$.
Concerning $(ii)$, when $p_k$ is a platform of the original terrace $\mathcal{T}^e$ connecting~$\overline{p}$ to~$0$, 
then the terrace connecting~$p_k$ to~$0$ is contained in $\mathcal{T}^e$. In particular $c_1^{p_k} (e)$ is also the speed of the travelling front
belonging to~$\mathcal{T}^e$, which connects~$p_k$ to 
some~$p_i$ with $i \geq k+1$. In other words, $c_1^{p_k} (e) = c [p_{k+1}] (e)$ as announced. For property~$(iii)$, if~$p_k$ is not a platform of~$\mathcal{T}^e$, then $\mathcal{T}^e$ contains a travelling front which connects some~$p_j$ to some other~$p_i$, with $j < k < i$. It follows that the profile of that travelling front 
 fulfils~\eqref{eq:through} for both $p_k$ and $p_{k+1}$, hence $c [p_{k+1}] (e) =c [p_k] (e)$.

\begin{proof}[Proof of Proposition~\ref{prop:coincide}]
We proceed by induction. In the case $k = 1$ we have by Definition~\ref{defi:upsw} $\underline{W}_{p_0} = \Upsilon_{p_0} =W_{c_1}^{p_0}$,
the last equality being a consequence of the fact that $c_1^{p_0}=c_1^{\overline{p}}= c [p_1]>0$
thanks to Assumption~\ref{ass:tangentialC1}. The result then follows from~\eqref{supphyp}. 

Now assume that the conclusion holds for some $k \geq 1$, and let us show it holds true for~$k+1$. 
We make use of the characterisation~\eqref{w_p_conv}, which holds by Lemma \ref{lem:ulW}
because the stable steady states are ordered, due to Assumption~\ref{ass:tangentialC1}.
Notice that~\eqref{w_p_conv} can be rewritten as
\begin{equation}\label{W>recursive_new}
\underline{W}_{p_k} = \text{Conv} \,  ( \underline{W}_{p_{k-1} } \cup \Upsilon_{p_k}) .
\end{equation}
It follows from~\eqref{w_p_conv} that $\underline{W}_{p_k}$ is convex and compact, 
hence
we can further rewrite it as in~\eqref{Wpk}, assuming in addition that
the hyperplanes 
$$P_e := \{ x \in \R^N \ | \ x \cdot e = L_e \}$$
are supporting hyperplanes for~$\underline{W}_{p_k}$.
Since $\underline{W}_{p_k}\supset\Upsilon_{p_0}$,
which contains a neighbourhood of the origin due to Assumption~\ref{ass:tangentialC1}
and the lower semi-continuity of $e \mapsto c_1^{\ol p} (e)$ (see also Lemma~\ref{lem:lsc}),
one has that $L_e>0$ for all $e \in S^{N-1}$.

Our goal is to show that, for any $e \in \Sph$,
\begin{equation*}\label{eq:goal12}
L_e = c [p_{k+1}] (e).\end{equation*}
%
First, due to \eqref{W>recursive_new} the hyperplane $P_e$ is a supporting hyperplane of either $\underline{W}_{p_{k-1}}$ or $\Upsilon_{p_k}$. Notice that the latter case is only possible if $c_1^{p_k} >0$ 
and $\Upsilon_{p_k} =W_{c_1^{p_k}}$ (otherwise $\Upsilon_{p_k}$ would reduce to the singleton $\{0\}$ by Definition~\ref{defi:upsw}, while $L_e >0$). From our inductive hypothesis and~\eqref{supphyp},
 we then~get
$$L_e  = \max\{c [p_k] (e),c_1^{p_k} (e)\}.$$
Let us distinguish two cases.

Suppose first that  $p_k$ is a platform of the terrace $\mathcal{T}^e$ connecting~$\overline{p}$ to~$0$ in direction $e$.
Then statements~$(i)$ and $(ii)$ of Lemma~\ref{lem_coincide} yield
$$c [p_k] (e)\leq c [p_{k+1}] (e)=c_1^{p_k} (e),$$
whence $L_e =\max\{c [p_k] (e),c_1^{p_k} (e)\}= c [p_{k+1}] (e)$ in this case, which is the desired identity.

Suppose now that $p_k$ is not a platform of the terrace $\mathcal{T}^e$.
Then Lemma~\ref{lem_coincide} $(iii)$ yields
$$c [p_{k+1}] (e)=c [p_k] (e).$$
Moreover,  $\mathcal{T}^e$ contains a travelling front connecting~$p_j$ to~$p_i$ with $j <k < i$ and its speed is equal to~$c [p_{k}] (e)$.
Our spreading result in the planar-like case, Theorem~\ref{th:planar_speeds},
implies, on the one hand, that
the solution~$u_1$ starting from the initial datum
$$1_{(-\infty,0]} (x \cdot e) \times \overline{p}(x)$$
satisfies
$$\forall \varepsilon >0,\quad
\limsup_{t \to +\infty} \left( \sup_{x \cdot e \geq (c [p_k] (e) + \varepsilon ) t} (u_1 (t,x) - p_i(x)) \right) \leq 0.$$
On the other hand, applying Theorem~\ref{th:planar_speeds} to the terrace
$\mathcal{T}^e_{p_k}$ connecting~$p_k$ to~$0$ in direction $e$, 
we infer that the solution $u_2$ starting from the initial datum
$$1_{(-\infty,0]} (x \cdot e) \times p_k(x)$$
satisfies
$$\forall \varepsilon >0,\quad
\liminf_{t \to +\infty} \left( \inf_{x \cdot e \leq (c_1^{p_k} (e) - \varepsilon )t } ( u_2 (t,x) - p_k (x) ) \right) \geq 0.$$
Therefore, since
$u_1 (t,x ) \geq u_2 (t,x)$ for all $t>0$ and $x \in \R^N$,  by the comparison principle, and $p_i<p_k $, 
we deduce by the arbitrariness of $\eps>0$, that
$$c_1^{p_k}(e) \leq  c [p_k] (e).$$
This shows that
$$\max\{c [p_k] (e),c_1^{p_k} (e)\}=c [p_k] (e),$$
and thus eventually $L_e=c [p_k] (e)=c [p_{k+1}] (e)$.

We have thereby proved that the conclusion holds for $k+1$. 
This ends the proof of Proposition~\ref{prop:coincide}, hence of Theorem~\ref{th:spread_cpct11}.
\end{proof}

\section{Discussion about our assumptions}\label{sec:assumptions}

Our results for compactly supported initial data require either Assumption~\ref{ass:same_shape} or Assumption~\ref{ass:tangentialC1}.
We now describe some situations where these assumptions may fail.

As we have discussed in the introduction, we have exhibited in our previous work~\cite{GR_2020} 
	an equation for which there exist some directions in which the propagating terrace is composed by two fronts,
		 and others in which it is composed by just one front. Therefore Assumption~\ref{ass:same_shape} does not hold in that case. In fact, our construction in~\cite{GR_2020} shares some similarities with the one below, and in particular also relied on the fact that bistable speeds may vary with the direction of the fronts, eventually resulting in different terrace shapes according to the procedure described in Section~\ref{sec:algorithm} here. We will not explore further that aspect and instead focus on our other assumptions.

While one may expect the regularity of the Wulff shapes 
to hold true in most situations, still there exist some counter-examples to Assumption~\ref{ass:tangentialC1}. 
These counter-examples can be constructed by building on the results of~\cite{DingGiletti}, which roughly state that the travelling front speeds in different directions may not only differ but are mostly independent of each other, apart from the fact they must have the same sign.
 
More precisely, the following theorem will yield the existence of a bistable equation
for which the Wulff shape is not smooth and therefore Assumption~\ref{ass:tangentialC1} fails, 
as well as the weaker property~\eqref{supphyp}.
\begin{theo}[\cite{DingGiletti}, Theorem 3]\label{thm:DG}
	In dimension $N\geq2$, for any finite set of directions with rational coordinates
	$e_1, \dots , e_I \in S^{N-1} \cap \mathbb{Q}^{N}$ with $e_i \neq e_j$, and any $c_1 ,\dots, c_I \geq 0$, there exists a spatially periodic and bistable equation of the type
	$$\partial_t u  = \Delta u + f(x,u),$$
	such that, for all $i\in\{1,\dots I\}$,
	 the (unique) speed of pulsating travelling fronts in direction $e_i$
	is equal to $c_i$.
\end{theo}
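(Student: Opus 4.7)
The plan is to construct, for the prescribed rational directions $e_1,\dots,e_I$ and non-negative target speeds $c_1,\dots,c_I$, a spatially periodic bistable nonlinearity $f(x,u)$ whose one-parameter family of travelling front speeds takes the value $c_i$ in direction $e_i$. The underlying idea is to exploit the freedom of dimension $N\ge 2$ together with the rationality of the $e_i$ to \emph{decouple} the speeds in different directions. The main handle is that classical one-dimensional bistable theory completely prescribes the sign and magnitude of a wave speed through the shape of the reaction term, so the task reduces to finding a periodic perturbation of a homogeneous bistable $f_0$ that realizes the right effective one-dimensional problem along each $e_i$ separately.

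First, I would start from a homogeneous balanced bistable nonlinearity $f_0(u)$ with stable states $0$ and $1$ and zero wave speed, and recall that a small perturbation $f_0 + \varepsilon g(u)$ changes the speed by an amount essentially proportional to $\int_0^1 g(u)\,\varphi(u)\,du$ for a fixed weight $\varphi>0$. This is the classical bistable speed-perturbation formula, and it allows me to prescribe a single target speed by choosing $\varepsilon g$. Second, since every $e_i$ is rational, there is a minimal lattice vector $k_i\in\mathbb{Z}^N$ parallel to $e_i$, so any pulsating front in direction $e_i$ has a profile that is periodic in $x$ with respect to a lattice compatible with $k_i$. This means that, working on a cube $[0,L]^N$ with $L$ a common multiple of the $|k_i|$, one can place $I$ disjoint sub-regions $\Omega_i\subset[0,L]^N$, each ``aligned'' with the lattice generated by $k_i$ in a way that makes $\Omega_i$ look like a parallel family of slabs orthogonal to $e_i$ when periodically extended.

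The core construction is then to define
\[
f(x,u) = f_0(u) + \sum_{i=1}^{I}\mathbf{1}_{\Omega_i}(x)\,\varepsilon_i g_i(u),
\]
with $\varepsilon_i g_i$ chosen via the one-dimensional perturbation formula so that a planar-like front crossing only the slab family associated to $\Omega_i$ would acquire speed $c_i$. The point of the geometric arrangement is that a pulsating front moving in direction $e_i$ traverses the slab structure $\Omega_i$ coherently (each slab contributes in the averaged reaction), while it crosses the other $\Omega_j$ transversally and non-resonantly, so that their contribution averages out along the lattice orbit and is negligible to leading order in the speed. One then verifies, via sub- and super-solutions obtained by gluing together shifted copies of the homogeneous bistable front, that the resulting speed in direction $e_i$ is indeed $c_i$, and that the bistable structure ($0$ and $1$ linearly stable, no other stable periodic states) is preserved for $\varepsilon_i$ small.

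The main obstacle is the rigorous decoupling in the combined equation: one must show that fronts in direction $e_i$ really do not feel the perturbations attached to $e_j$, $j\neq i$. This requires two ingredients. First, a geometric lemma asserting that in dimension $N\ge 2$ any finite collection of distinct rational directions can be separated by disjoint periodic slab systems inside a sufficiently large period cube—this is where rationality and $N\ge 2$ are essential, since only then can one organize all slab orientations simultaneously on a common integer lattice. Second, an averaging/homogenization-type estimate showing that the $j$-th slab system, viewed along direction $e_i$, acts to leading order as a constant reaction shift and therefore does not alter the bistable speed in direction $e_i$. Handling the boundary case $c_i=0$, where the speed-perturbation formula degenerates, would be done by a simple continuity-in-parameter argument, possibly together with the symmetric case $c_i>0$ established first.
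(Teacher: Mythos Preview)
This theorem is not proved in the paper: it is quoted from \cite{DingGiletti} (their Theorem~3) and used as a black box. The only information the paper gives about its proof is in Remark~\ref{rmk:trustmebro}, which records that the construction in \cite{DingGiletti} takes the form
\[
f(x,u) = u(1-u)(u-1/2) + \chi(x,u),
\]
with $\chi$ a smooth, nonnegative, nontrivial spatially periodic perturbation of the balanced bistable nonlinearity. So there is no ``paper's own proof'' to compare against beyond this structural hint.

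That said, your starting point---a balanced homogeneous bistable $f_0$ perturbed by a periodic term---is consistent with what the remark reports. However, your proposed mechanism for decoupling the directions has a genuine gap. You want to place disjoint periodic slab systems $\Omega_i\subset[0,L]^N$, each orthogonal to $e_i$, and argue that a front in direction $e_i$ crosses the $\Omega_j$ for $j\neq i$ ``transversally and non-resonantly'' so that their effect averages out. This is not correct as stated: a periodic perturbation supported on $\Omega_j$ is still a genuine, nonvanishing periodic term when viewed by a front travelling in direction $e_i$, and there is no general averaging principle that makes its contribution to the bistable speed disappear. The speed of a pulsating front depends on the full periodic reaction, not just on its average along the direction of motion. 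Likewise, periodic slab families orthogonal to distinct rational directions cannot in general be made disjoint inside a single period cell---their periodic extensions will intersect. So both the geometric separation and the ``averaging out'' step, which together carry the whole decoupling, are not justified and would need to be replaced by a different argument (as in \cite{DingGiletti}) that controls the speed in each prescribed direction without relying on such independence.
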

\begin{rmk}\label{rmk:trustmebro}
By bistable, it is meant here, as well as in~\cite{DingGiletti}, that~$0$ and~$1$ are linearly stable periodic steady states, and that any other periodic steady state is linearly unstable. As far as periodicity is concerned, up to some rescaling and without loss of generality we can always assume that the function~$f$ has period~$1$ in each direction of the canonical basis.

We should also point out that the proof of~\cite[Theorem 3]{DingGiletti} 
relies on the construction of the reaction term as 
$$f(x,u) = u (1-u) (u-1/2) + \chi (x,u).$$
Here $u \mapsto u(1-u) (u-1/2)$ is the balanced bistable nonlinearity, whose corresponding reaction-diffusion equation admits a travelling front~$U_0 (x \cdot e)$ with zero speed in any direction~$e \in S^{N-1}$.  On the other hand, $\chi$ is a well chosen smooth, nonnegative and nontrivial spatially periodic function; see~\cite[Section 4.1]{DingGiletti}. The nonnegativity of $\chi$ will actually play an important role in the construction of our counter-example to Assumption~\ref{ass:tangentialC1}.
\end{rmk}


By suitably choosing a set of directions and a set of speeds, one can construct a 
Wulff shape which exhibits a ``corner'' and thus violates Assumption~\ref{ass:tangentialC1}. We recall that, if $(c^*(e))_{e\in\Sph}$ is the set of speeds of the pulsating travelling fronts in the corresponding directions,
their Wulff shape is 
$$W^* := W_{c^*}= \bigcap_{e \in S^{N-1}} \{ x \in \R^N \ | \ x \cdot e \leq c^* (e) \} .$$

\begin{cor}\label{ce:C1}
In any dimension $N\geq2$, there exists a bistable and spatially periodic equation of the type
	$$\partial_t u  = \Delta u + f(x,u),$$
	with $0$ and $1$ being the linearly stable steady states, 
	such that the Wulff shape $W^*$ of the speeds of the travelling fronts
	is nontrivial (i.e.~it has non-empty interior), yet its boundary is not of class $C^1$ and 
	moreover one of the hyperplanes $\{ x \in \R^N \ | \ x \cdot e = c^* (e) \}$ is not a supporting hyperplane. 
\end{cor}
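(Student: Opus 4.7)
My proof proposal invokes Theorem \ref{thm:DG} to construct a bistable spatially periodic equation whose pulsating-front speeds are prescribed at finitely many rational directions. The idea is to choose one direction $e_0$ whose prescribed speed $c^*(e_0)$ is so large relative to the speeds in other directions that the hyperplane $\{x : x \cdot e_0 = c^*(e_0)\}$ must lie entirely outside $W^*$. Since $W^*$ is the intersection of all half-spaces $\{x : x \cdot e \leq c^*(e)\}$, making one constraint exceedingly loose automatically renders the corresponding hyperplane non-supporting, which in turn rules out $C^1$ regularity via the contrapositive of Proposition \ref{prop:wulff_reg}.

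Concretely, in any dimension $N \geq 2$, I pick the $2N$ axial directions $\pm\mathbf{e}_1, \dots, \pm\mathbf{e}_N \in S^{N-1} \cap \mathbb{Q}^N$ with prescribed speeds equal to $1$, together with one extra rational direction $e_0 := (3/5, 4/5, 0, \ldots, 0) \in S^{N-1} \cap \mathbb{Q}^N$ and prescribed speed $c^*(e_0) = 10$. Theorem \ref{thm:DG} yields a bistable periodic equation realizing these speeds. From the axial constraints, $W^* \subseteq [-1,1]^N$; on this cube the linear functional $x \mapsto x \cdot e_0$ attains values at most $3/5 + 4/5 = 7/5$, strictly less than $10 = c^*(e_0)$. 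Hence $W^* \cap \{x \cdot e_0 = c^*(e_0)\} = \emptyset$, proving that the hyperplane associated with $e_0$ is not supporting. Since $c^*$ is positive and lower semi-continuous by Lemma \ref{lem:lsc}, the contrapositive of Proposition \ref{prop:wulff_reg} then precludes $\partial W^*$ from being of class $C^1$.

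It remains to guarantee that $W^*$ has non-empty interior. By Remark \ref{rmk:trustmebro}, the equation produced by Theorem \ref{thm:DG} has nonlinearity of the form $u(1-u)(u-1/2) + \chi(x,u)$ with $\chi \geq 0$, so by comparison with the balanced bistable equation all speeds are nonnegative and the origin lies in $W^*$. To obtain a strictly positive uniform lower bound, I would arrange matters so that $\chi(x,u) \geq \psi(u)$ for a suitable smooth nonnegative function $\psi$ such that the homogeneous equation $\partial_t v = \Delta v + v(1-v)(v-1/2) + \psi(v)$ admits a travelling front with strictly positive speed $c_\psi$. A standard parabolic comparison argument then forces $c^*(e) \geq c_\psi > 0$ uniformly in $e$, whence $B_{c_\psi}(0) \subseteq W^*$ and $W^*$ has non-empty interior.

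The delicate point is precisely the last one: the proof of \cite[Theorem 3]{DingGiletti} prescribes the speeds via a very specific construction of $\chi$, and it is not immediate that $\chi$ can simultaneously be forced to dominate a prescribed positive $\psi$ without perturbing the prescribed values of $c^*(\pm\mathbf{e}_i)$ and $c^*(e_0)$. To circumvent this I see two natural routes: (i) absorb $\psi$ into the base bistable nonlinearity by redefining the reference profile as $u(1-u)(u-1/2) + \psi(u)$, rescaling if necessary, and re-running the construction of \cite{DingGiletti} in this enriched setting; or (ii) verify directly that the freedom in choosing $\chi$ in the proof of Theorem \ref{thm:DG} permits an additional pointwise lower bound. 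Either route should succeed, and once the uniform positivity is in hand the corollary follows immediately from the second paragraph.
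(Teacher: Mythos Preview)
Your geometric idea --- prescribing speeds in a few rational directions so that one hyperplane is forced strictly outside the intersection of the others --- is essentially the same as the paper's. The paper uses two orthogonal directions with speed~$1$ and $\hat e=\frac35 e_1+\frac45 e_2$ with speed~$2$; your choice of the $2N$ axial directions and $e_0$ with speed~$10$ works just as well for this part.

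The genuine gap is exactly the point you flag: you do not actually establish that $c^*(e)>0$ for \emph{all} $e\in S^{N-1}$, and without this $W^*$ may collapse to $\{0\}$ and Lemma~\ref{lem:lsc} (which you invoke for lower semi-continuity) is not even applicable, since it is stated under~\eqref{c1>0}. Your proposed route~(i) is problematic because the construction in~\cite{DingGiletti} specifically starts from the \emph{balanced} nonlinearity $u(1-u)(u-1/2)$, whose zero-speed front is the reference object around which $\chi$ is built; replacing it by a nonlinearity with positive speed would require redoing that construction, which is not a small adjustment. Route~(ii) is pure speculation: nothing in Remark~\ref{rmk:trustmebro} suggests $\chi$ can be taken uniformly bounded below by a nontrivial~$\psi(u)$.

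The paper resolves this differently and rigorously: it takes the nonlinearity $\tilde f$ produced by Theorem~\ref{thm:DG} and perturbs it to $f_\varepsilon=\tilde f+\varepsilon u(1-u)$. One then checks, via continuity of periodic principal eigenvalues, that the equation remains bistable for small~$\varepsilon$; positivity of all speeds follows by comparison with the homogeneous equation $\partial_t v=\Delta v+v(1-v)(v-\tfrac12+\varepsilon)$; and finally one proves $c^*_\varepsilon(e)\searrow\tilde c(e)$ as $\varepsilon\searrow0$ (by monotonicity in~$\varepsilon$ and a compactness/limiting argument on the fronts). The strict inequality in the geometric step then survives for $\varepsilon$ small. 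This perturbation-and-continuity argument is the missing ingredient in your proposal.
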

 
\begin{proof}
	\Step{1}{Using Theorem~\ref{thm:DG}.}
	We take two orthogonal directions $e_1,e_2\in\Sph$ and call
	$$\hat e:=\frac 35e_1+\frac45e_2\in\Sph.$$
	We then apply Theorem~\ref{thm:DG} and find a bistable, spatially periodic equation
	\begin{equation}\label{eq:tildef}
	\partial_t u  = \Delta u + \tilde{f} (x,u),
	\end{equation}
	such that the corresponding speeds $\tilde{c} (e_1)$, $\tilde{c} (e_2)$, $\tilde{c} (\hat e)$
	of the travelling fronts in the directions $e_1$, $e_2$, and~$\hat e$ satisfy
	\begin{equation}\label{eq:choice_speeds}
	\tilde{c} (e_1)=\tilde{c} (e_2)=1,\qquad \tilde{c} (\hat e)=2.
	\end{equation}
However, the main difficulty now  is that Theorem~\ref{thm:DG}, as stated in~\cite{DingGiletti}, does not ensure the positivity of $\tilde{c}$ in other directions. We only know, by a simple integration argument on the profile's equation (see e.g.~\cite{DHZ17,Ducrot2}), that the front speed in any given direction must have the same sign, in the large sense, as
the quantity
$$\int_{0}^1 \int_{(0,1)^N} \tilde{f} (x,u) dx du .$$
In particular, here $\tilde{c} (e) \geq 0$ for any $e \in S^{N-1}$. Still, the Wulff shape may reduce to the origin 
and spreading may not occur. The remainder of the proof will be mostly devoted to a perturbation of~\eqref{eq:tildef} to overcome this issue. If~$\tilde{c}$ were known to be positive in all directions, one may directly skip to the last step of this proof.

\Step{2}{Ensuring the positivity of the speeds.}
According to Remark~\ref{rmk:trustmebro}, we may also assume without loss of generality that, for any $x \in \mathbb{R}^N$ and $u \in [0,1]$,
	$$\tilde{f} (x,u) \geq u (1-u) (u-1/2).$$
	Let us consider, for $\eps>0$, 
	$$f_\varepsilon (x,u) = \tilde{f} (x,u) + \varepsilon u (1-u).$$
	One may check that the corresponding parabolic equation
	\begin{equation}\label{eq:justef}
	\partial_t u = \Delta u + f_\varepsilon (x,u)
	\end{equation}
	remains of the bistable type for $\varepsilon >0$ small enough. This relies on the continuity of the periodic principal eigenvalue~$\lambda  (\sigma)$ of the operator
			$$\mathcal{L}_\sigma : \varphi \mapsto \Delta \varphi + \sigma (x) \varphi,$$
		with respect to $\sigma \in L^\infty ([0,1]^N; \mathbb{R})$; see e.g.~\cite{Kato}. It indeed 
		yields that, for $p\equiv0$ or $p\equiv1$,
		then $\lambda (\partial_u f_\varepsilon (\cdot,p)) \to  \lambda (\partial_u \tilde{f} (\cdot,p)) < 0$ 
		as $\varepsilon \to 0$, hence $0$ and $1$ are linearly stable solutions to~\eqref{eq:justef}
		 for any~$\varepsilon$ small enough.

Moreover, assume by contradiction that there exist $\varepsilon_n \to 0$ and periodic steady states~$p_n \in (0,1)$ of~\eqref{eq:justef}, with $\varepsilon = \varepsilon_n$, which are not linearly unstable, that is $\lambda (\partial_u f_{\varepsilon_n}  (\cdot, p_n)) \leq 0$. Then, by standard compactness arguments, the sequence $p_n$ converges uniformly as $n \to +\infty$ to some periodic steady state~$p \in [0,1]$ of~\eqref{eq:tildef}. Again by continuity of the periodic principal eigenvalue, we get that $\lambda (\partial_u \tilde{f} (\cdot, p)) \leq 0$. Due to~\eqref{eq:tildef} being bistable, we infer that either~$p \equiv 0$ or~$p \equiv 1$, hence
$\lambda ( \partial_u \tilde{f} (\cdot,p))<0$. In the former case, consider
$$\overline{u} (t,x) := \delta e^{ \frac{\lambda ( \partial_u \tilde{f} (\cdot,0))}{2} t }  \varphi (x),$$
where $\delta>0$ and~$\varphi$ is a positive eigenfunction of $\mathcal{L}_{\partial_u \tilde{f} (\cdot,0)}$.
The negativity of $\lambda ( \partial_u \tilde{f} (\cdot,0))$ implies that $\ol u(t,x)\to0$ as $t\to+\infty$,
and moreover that there exists $\eta >0$ such that, if $\delta, \varepsilon \in (0, \eta)$, 
then~$\overline{u}$ is a supersolution of~\eqref{eq:justef}. In particular, taking $\delta \in (0,\eta)$ and $n$ large enough such that $\varepsilon_n \in (0,\eta)$ and $0 < \| p_n \|_\infty \leq \delta \min \varphi $, we get by the parabolic comparison principle that $p_n \leq \overline{u}$, a contradiction with the fact that $\ol u$
tends to $0$ as $t\to +\infty$. The case $p\equiv 1$ can be handled similarly.

Now that we know that the equation~\eqref{eq:justef} is bistable, provided that $\eps$ is sufficiently small,
we deduce from \cite{GR_2020} the existence of a (bistable) pulsating travelling front of~\eqref{eq:justef} with a unique speed~$c^*_\varepsilon (e)$ in any direction~$e \in S^{N-1}$. Let us show now that 
\Fi{ceps>0}
\forall e\in\Sph,\quad c^*_\eps(e)>0.
\Ff
We have that
$$f_\eps(u)\geq u(1-u)(u-\frac12+\eps)=:g_\eps(u).$$
It is a classical result that the solution $u$ of the equation $\partial_t u = \Delta u + g_\varepsilon (u) $,
with a compactly supported initial datum larger than $1/2$ on a suitably large ball, 
satisfies the invasion property, i.e.~$u(t,x)\to1$ as $t\to+\infty$,
locally uniformly in $x\in\R^N$, see \cite{AW}.
Therefore, the comparison principle implies
that this holds true for the solution of the equation~\eqref{eq:justef}
with the same initial datum, which in turn immediately yields \eqref{ceps>0}, again by comparison.

Finally, we claim that
\begin{equation}\label{claim:continuity_speeds}
c^*_\varepsilon (e)\searrow \tilde{c} (e) \quad\text{as }\;\varepsilon \searrow 0 .
\end{equation}
To verify~\eqref{claim:continuity_speeds}, let us fix $e \in S^{N-1}$. Notice first that, for any $0 < \varepsilon_1 < \varepsilon_2$, we have $\tilde{f} \leq f_{\varepsilon_1} \leq f_{\varepsilon_2}$. Due to the spreading result Theorem~\ref{th:planar_speeds} and the parabolic comparison principle, necessarily $\tilde{c} (e) \leq c^*_{\varepsilon_1} (e) \leq c^*_{\varepsilon_2} (e)$. In particular $c^*_\eps (e)$ admits a limit $c^*_0(e)$ as $\eps\searrow0$, which fulfils
$$
c^*_\varepsilon (e)\searrow c^*_0(e)\geq\tilde{c} (e)\geq0 \quad\text{as }\;\varepsilon \searrow 0.$$
If $c^*_0 (e) = 0$, then immediately~\eqref{claim:continuity_speeds} holds true. Assume instead that $c^*_0 (e) >0$. 
Then for $\eps>0$ sufficiently small, consider the bistable pulsating front of~\eqref{eq:justef} in direction~$e$, 
	$$u_\varepsilon (t,x) = U_\varepsilon (x, x \cdot e - c^*_\varepsilon (e) t),$$
shifted so that $$\max U_\varepsilon (\cdot, 0) = \eta,$$
and $\eta >0$ belongs to the basin of attraction of~$0$ with respect to~\eqref{eq:tildef}. Recall that $U_\varepsilon$ is nonincreasing with respect to its second variable according to Theorem~\ref{th:existence},
hence $u_\eps$ is nondecreasing in $t$ due to~\eqref{ceps>0}.
By standard parabolic estimates, up to extraction of a subsequence we have that 
$$\lim_{\varepsilon \searrow 0} u_\varepsilon = \tilde{u},$$
where the convergence is understood in the locally uniform sense and also holds for the first order in 
time and second order in space derivatives. Then $\tilde{u}$ satisfies~\eqref{eq:tildef}, as well 
as~$\partial_t \tilde u \geq 0$ and
\begin{equation}\label{eq:almost_front}
\tilde{u} (t+T,x +L) = \tilde{u} (t,x),
\end{equation}
for any $L\in \mathbb{Z}^N$ and $T = \frac{L \cdot e}{c^*_0 (e)}$. For $\tilde{u}$ to be a bistable pulsating travelling front of~\eqref{eq:tildef} in direction~$e$, it only remains to check that $\tilde{u} $ converges to~$0$ (resp.~$1$) as $t\to -\infty$ (resp. $t \to +\infty$). The former convergence swiftly follows from our choice of~$\eta$ in the basin of attraction of~$0$. For the latter, notice first that, by its time monotonicity, the function~$\tilde{u}$ converges as $t\to + \infty$ to some steady state~$q>0$, which is also spatially periodic by passing to the limit in~\eqref{eq:almost_front}. 
The function $q$ cannot be a linearly unstable steady state, because then Lemma~\ref{lem:counter} 
would imply that the speed $c^*_0(e)$ of
the front~$\tilde{u}$ connecting~$q$ and~$0$ should have a negative speed, which is not the case.
Thus $q\equiv 1$, and therefore we conclude that $\tilde{u}$ is a bistable pulsating travelling front with speed $c_0^* (e)\geq0$, hence by 
our uniqueness result, Theorem \ref{th:uniqueness}, 
$c_0^* (e) = \tilde{c} (e)$. This ends the proof of~\eqref{claim:continuity_speeds}.


\Step{3}{Conclusion of the proof.}
 We know that the Wulff shape $W^*$ of the speeds of the travelling fronts
	for the equation~\eqref{eq:justef} is a compact, convex set, which by~\eqref{ceps>0}
	 and the lower semi-continuity of $e \mapsto c^*_\varepsilon (e)$~\cite{Guo_speeds,RossiFG}  also contains an open neighborhood of~$0$. 
	 
	By direct inspection, one derives from \eqref{eq:choice_speeds} that there exists $\delta>0$ such that
$$\{ x \in \R^N \ | \ x \cdot e_1 \leq \tilde{c} (e_1)+\delta \}\cap 
	\{ x \in \R^N \ | \ x \cdot e_2 \leq \tilde{c} (e_2)+\delta \}\subset
	\{ x \in \R^N \ | \ x \cdot \hat e <  \tilde{c} (\hat e) \}$$
Therefore, thanks to~\eqref{claim:continuity_speeds}, one infers that
	$$W^*\subset \{ x \in \R^N \ | \ x \cdot e_1 \leq c^*_\varepsilon (e_1) \}\cap 
	\{ x \in \R^N \ | \ x \cdot e_2 \leq c^*_\varepsilon (e_2) \}\subset
	\{ x \in \R^N \ | \ x \cdot \hat e <  c^*_\varepsilon (\hat e) \},$$
provided $\varepsilon$ is small enough. 
This shows that the hyperplane $\{ x \in \R^N \ | \ x \cdot \hat e= c^*_\varepsilon (\hat e)\}$ is not a supporting hyperplane of~$W^*$, whence in particular Assumption~\ref{ass:tangentialC1} is violated,
 owing to Proposition~\ref{prop:wulff_reg}.
\end{proof}


\subsection*{Acknowledgements}
This work has received support from the ANR project ReaCh, {ANR-23-CE40-0023-01}.\\
T.G.~acknowledges support from the ANR project Indyana, {ANR-21-CE40-0008}.\\
 L.R.~acknowledges support from the European Union -- Next Generation EU, on the PRIN project
 2022W58BJ5 ``PDEs and optimal control methods in mean field games, population dynamics and multi-agent models''
 and from INdAM--GNAMPA.


\bibliographystyle{abbrv}
\bibliography{GR_Bibliography}


\end{document}